\numberwithin{equation}{section}
\newtheorem{theorem}{Theorem}[section]
\newtheorem{lemma}[theorem]{Lemma}
\newtheorem{example}[theorem]{Example}
\newtheorem{definition}{Definition}[section]
\newtheorem{corollary}[theorem]{Corollary}
\newtheorem{remark}[theorem]{Remark}
\newtheorem{remarks}[theorem]{Remarks}
\newcommand{\cl}[1]{\mathcal{#1}} %\cl A
\newcommand{\bb}[1]{\mathbb{#1}}
\newcommand{\sca}[1]{\left\langle#1\right\rangle} %\sca{x,y}
\newcommand{\nor}[1]{\left\Vert #1\right\Vert}
\newcommand{\Lat}[1]{\mathrm{Lat}(#1)} %\Lat{A}
\newcommand{\Alg}[1]{\mathrm{Alg}(#1)} %\Alg{L} 
\begin{document}

\title{On stable maps of operator algebras }

\author[G.K. Eleftherakis ]{G.K. Eleftherakis }

\address{G. K. Eleftherakis\\ University of Patras\\Faculty of Sciences\\ Department of Mathematics\\265 00 Patras Greece }
\email{gelefth@math.upatras.gr}

\thanks{2010 {\it Mathematics Subject Classification.} 47L30 (primary), 46L05, 47L05, 47L25, 47L35, 
16D90 (secondary)} 

\keywords{Operator algebras, $C^*$-algebras, TRO, Stable isomorphism, Morita equivalence}

%\keywords{TRO, stable isomorphism}

%\subjclass[2000]{Primary 46L15; Secondary 47L25}

\date{}

\maketitle

\begin{abstract} We define a strong Morita-type equivalence $\sim _{\sigma \Delta }$ for operator algebras. 
We prove that $A\sim _{\sigma \Delta }B$ if and only if $A$ and $B$ are stably isomorphic. We 
also define a relation $\subset _{\sigma \Delta }$ for operator algebras. We prove 
that if $A$ and $B$ are $C^*$-algebras, then $A\subset _{\sigma \Delta } B$ if and only if 
there exists an onto $*$-homomorphism $\theta :B\otimes \cl K  \rightarrow A\otimes \cl K,$ 
where $\cl K$ is the set of compact operators acting on an infinite
 dimensional separable Hilbert space. Furthermore, we prove that if $A$ and $B$ are $C^*$-algebras such that   
$A\subset _{\sigma \Delta } B$ and $B\subset _{\sigma \Delta } A $,  then there exist projections $r, \hat r$ in the centers 
of $A^{**}$ and $B^{**}$, respectively, such that 
$Ar\sim _{\sigma \Delta }B\hat r$ and $A (id_{A^{**}}-r)
 \sim _{\sigma \Delta }B(id_{B^{**}}-\hat r). $ \end{abstract}

\section{Introduction}

Two operator algebras $A$ and $B$ are called stably isomorphic if the algebras $A\otimes \cl K$ and $B\otimes \cl K$ 
are isomorphic as operator algebras. Here, $\cl K$ is the set of compact operators acting on $l^2(\bb N).$ Stably isomorphic $C^*$-algebras 
are strongly Morita equivalent in the sense of Rieffel. The same is true of non-self-adjoint operator algebras if we 
consider the strong Morita equivalence that was introduced by Blecher, Muhly, and Paulsen in \cite{bmp}. Meanwhile, the 
converse is not true, even in the case of $C^*$-algebras \cite{bgr}. 

We introduce a new Morita type equivalence between operator algebras: Let $A$ and $B$ be operator algebras that are possibly non-self-adjoint. We say that   $A$ and $B$ are $\sigma-$strongly $ \Delta $-equivalent, and write $A\sim _{\sigma \Delta }B$, if 
there exist completely isometric homomorphisms $\alpha: A\rightarrow  \alpha(A),\;\; \beta: B\rightarrow  \beta (B)$ 
and a $\sigma $-ternary ring of operators $M$ such that 
\begin{equation}\label{giasemi}\alpha (A)=\overline{[M^*\beta (B)M]}^{\|\cdot\|} , 
\;\; \beta (B)=\overline{[M\alpha (A)M^*] }^{\|\cdot\|}. \end{equation}
See the definition of the $\sigma -$ternary ring of operators in Definition \ref{110}.

In the proof of \cite[Theorem 3.2]{elehoust}, see also  \cite[Lemma 3.4]{elehoust}, we noticed that if $A, B$ are operator algebras 
possessing countable approximate identities, $M$ is a ternary ring of operators and the triple $(A, B, M)$ satisfies (\ref{giasemi}) 
 then $M$ is necessarily a $\sigma -$ternary ring of operators. We used this fact in order to prove that $A$ and 
$B$ are stably isomorphic. Subsequently in \cite[Theorem 4.6]{elekak} we extended the proof in the case of operator spaces.
In the present paper, we prove that $\sim  _{\sigma \Delta }$ is an equivalence relation in the class of operator algebras 
and we use this fact to prove that 
$A\sim _{\sigma \Delta }B$ if and only if $A$ and $B$ are stably isomorphic. 

In \cite{elehoust}, we studied the relationship between 
$A$ and $B$ when (\ref{giasemi}) holds for a ternary ring (TRO) of operators $M$ that is not necessarily a
$\sigma $-TRO. This relation is not equivalent to the existence of an operator algebra isomorphism between 
$A\otimes \cl K$ and $B\otimes \cl K.$ 

We also consider a weaker relation $\subset _{\sigma \Delta }$ between operator algebras:  
We say that $A ,$  $\sigma \Delta $-embeds into $B$ 
if there exists a projection $p$ in the center of $\Delta (B^{**}),$ where  $\Delta (B^{**})$ is the diagonal 
of the second dual operator algebra of $B,$ such that $pBp$ is an operator algebra and $A\sim _{\sigma \Delta }pBp.$ 
In this case, we write $A\subset _{\sigma \Delta }B.$ We prove that $\subset _{\sigma \Delta }$ 
is transitive. For the case of $C^*$-algebras, we prove that $A\subset _{\sigma \Delta }B$ 
if and only if there exists an onto $*$-homomorphism from  $B\otimes \cl K$ onto $A\otimes \cl K,$ 
which is true if and only if there exists an ideal $I$ of $B$ such that $A\sim _{\sigma \Delta }B/I.$ 

We investigate whether it is true that $A\sim _{\sigma \Delta }B$ if $A\subset _{\sigma \Delta }B$ and  $B\subset _{\sigma \Delta }A.$ 
In general, this is not true (see Section \ref{non}). It is also not true even in the case of $C^*$-
algebras (see Example \ref{2000000}). However, we prove that if $A$ and $B$ are $C^*$-algebras such that   
$A\subset _{\sigma \Delta } B$ and $B\subset _{\sigma \Delta } A $,  then there exist projections $r, \hat r$ in the centers 
of $A^{**}$ and $B^{**}$, respectively, such that 
$Ar\sim _{\sigma \Delta }B\hat r$ and $A (id_{A^{**}}-r)
 \sim _{\sigma \Delta }B(id_{B^{**}}-\hat r). $ A dual version of the results obtained in this article can be found in 
\cite{elest}.

In the following we describe the notations and symbols used in this paper. If $H, K$ are Hilbert spaces, then $B(H, K)$ is 
the space of bounded operators from  $H$ to $K.$ We write $B(H)$ for $B(H, H).$ A ternary ring of operators (TRO) 
is a subspace of some $B(H, K)$ satisfying $MM^*M\subseteq M$ (see the definition of a $\sigma $-TRO in Definition \ref{110}). 
An operator algebra is an operator space and Banach algebra for which there exists a completely isometric 
homomorphism $\alpha: A \rightarrow B(H).$ In this article, when we consider an operator algebra, 
we mean an operator algebra with a contractive approximate identity. We note that $C^*$-algebras possess contractive approximate identities
 automatically. If $X$ is an operator space, then $M_\infty (X)$ is the set of $\infty \times \infty $ 
matrices whose finite submatrices have uniformly bounded norm. The space   $M_\infty (X)$ 
is an operator space. In addition, $M_\infty ^{fin}(X)$ will denote the subspace of $M_\infty (X)$ consisting 
of "finitely supported matrices." We write $K_\infty (X)$ for the norm closure in $M_\infty (X)$ of $M_\infty ^{fin}(X).$ 
It is well-known that the space  $K_\infty (X)$ is completely isometric isomorphic with $X\otimes \cl K,$ 
where $\otimes $ is the minimal tensor product \cite{bm}. For further details on the operator space theory that is used in this paper, we refer the reader to the books by
\cite{bm}, \cite{er}, \cite{paul}, and \cite{pis}.

A nest $\cl N\subseteq B(H)$ is a totally ordered set of orthogonal projections containing the zero and identity operators that are closed under arbitrary suprema and infima. Given a nest   $\cl N\subseteq B(H),$ by $\Alg{ \cl N}$ 
we denote the corresponding nest algebra:
$$\{x\in B(H): (I_H-n)xn=0,\;\;\forall \;\;n\;\in \;\cl N\}.$$ 
Given an operator algebra $A,$ we denote its center by $Z(A)$ and its diagonal $A\cap A^*$ by $\Delta (A).$    
If $S$ is a subset of a vector space, then we denote the linear span of the elements of $S$ by $[S]$.

\section{Preliminaries}

The purpose of this section is to prove Lemma \ref{150}, which is required to prove that 
$\sim _{\sigma \Delta }$  is an equivalence relation in Section \ref{xxxxxx}.

\begin{definition}\label{110} Let $H,K$ be Hilbert spaces, and $M\subseteq B(H,K)$ be a norm closed TRO. We call $M$ $\sigma $-TRO 
if there exist sequences $\{m_i,\; n_i,\; i\in \bb N\}\subseteq M$ such that 
$$\lim_l\sum_{i=1}^lm_im_i^*m=m, \;\;\lim_l\sum_{i=1}^lmn_i^*n_i=m, \;\;\forall m\in M$$ and $$ \nor{ 
\sum_{i=1}^l m_im_i^*}\leq 1 , \;\;\nor{
\sum_{i=1}^l n_i^*n_i}\leq 1, \;\forall\; l.$$
\end{definition}

\begin{remark} A norm-closed TRO $M$ is a $\sigma $- TRO if and only if the $C^*$ algebras $\overline{[M^*M]}^{\|\cdot\|}, \overline{[MM^*]}^{\|\cdot\|} $ are $\sigma$-unital. A proof of this fact can be found in 
Theorem 2.1 in \cite{brown}.
\end{remark}

\begin{lemma}\label{120} Let $A\subseteq B(H), B\subseteq B(K)$ be $C^*$ algebras and $M\subseteq B(H,K)$ be a $\sigma $-TRO such that $$B=\overline{[M^*AM]}^ {\|\cdot\|}, \;\;\; MBM^*\subseteq A.$$ If $A$ is $\sigma $-unital, then $B$ is $\sigma $-unital.
\end{lemma}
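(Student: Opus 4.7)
My plan is to construct a countable approximate identity of $B$ explicitly, by combining the countable approximate identity $(e_k)\subseteq A_+$ of $A$ with the sequence $(n_i)\subseteq M$ provided by the $\sigma$-TRO structure. I set
\[
Q_l = \sum_{i=1}^l n_i^* n_i \in B(K), \qquad u_{l,k} = \sum_{i=1}^l n_i^* e_k n_i \in B,
\]
both positive and of norm at most one ($u_{l,k}$ lies in $B$ because each summand lies in $M^*AM$). The $\sigma$-TRO hypothesis gives $mQ_l \to m$ in norm for every $m\in M$ and hence, by taking adjoints, $Q_l m^*\to m^*$. Using this on the dense subspace $[M^*AM]\subseteq B$ together with $\|Q_l\|\le 1$, one checks that $Q_l b\to b$ and $bQ_l\to b$ in norm for every $b\in B$, even though $Q_l$ need not lie in $B$.

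The main step is to show that the countable family $\{u_{l,k}\}$ is \emph{sufficient}: for every $b\in B$ and every $\varepsilon>0$, there exist $l,k$ with $\|u_{l,k} b - b\|<\varepsilon$. Writing $b - u_{l,k} b = (b - Q_l b) + (Q_l b - u_{l,k} b)$, the first summand tends to zero as $l\to\infty$, while
\[
Q_l b - u_{l,k} b = \sum_{i=1}^l n_i^* (I - e_k)(n_i b).
\]
Here the hypothesis $MBM^*\subseteq A$ is used crucially. For every $i,j$, $n_i b n_j^*\in A$, so from $bQ_{l'}\to b$ we obtain
\[
n_i b = \lim_{l'\to\infty} n_i b Q_{l'} = \lim_{l'\to\infty}\sum_{j=1}^{l'}(n_i b n_j^*)\, n_j \ \in \ \overline{[AM]}^{\|\cdot\|}.
\]
For any $am\in AM$, $(I-e_k)(am) = ((I-e_k)a)m$ has norm tending to zero, and a density and boundedness argument then yields $\|(I-e_k)(n_i b)\|\to 0$ as $k\to\infty$ for each fixed $i$. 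Hence, for fixed $l$, $\|Q_l b - u_{l,k} b\|\le \sum_{i=1}^l \|(I-e_k)(n_i b)\|\to 0$. Choosing first $l$ and then $k$ large enough gives $\|u_{l,k}b - b\|<\varepsilon$; the analogous right-hand statement follows by applying the same reasoning to $b^*\in B$ and taking adjoints.

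To conclude that $B$ is $\sigma$-unital, I would enumerate $\{u_{l,k}\}$ as $(u_n)_{n\ge 1}$ and set $h=\sum_{n=1}^\infty 2^{-n} u_n \in B_+$, which converges in norm. Since $u_n\le 2^n h$, a standard fact places each $u_n$ in the hereditary subalgebra $\overline{hBh}$, and consequently $u_n B \subseteq \overline{hBh}\cdot B \subseteq \overline{hB}$. Sufficiency then gives $B \subseteq \bigcup_n \overline{u_n B} \subseteq \overline{hB}$, so that $h$ is a strictly positive element of $B$ and $B$ is $\sigma$-unital. The principal obstacle is the middle paragraph: without the hypothesis $MBM^*\subseteq A$ one cannot realize $n_i b$ as a norm-limit of elements of $AM$, and then the key convergence $(I-e_k)(n_i b)\to 0$ would fail.
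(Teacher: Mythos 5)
Your proof is correct, and while it builds essentially the same object as the paper, the verification is genuinely different. The paper also combines a countable approximate identity $(a_n)$ of $A$ with the $\sigma$-TRO generators into an $\ell^1$-weighted sum, namely $b=\sum_{l,n}2^{-n}2^{-l}\|a_n\|^{-2}c_{l,n}c_{l,n}^*$ with $c_{l,n}=\sum_{i\le l}m_i^*a_nm_i$, and then checks strict positivity by contradiction: a state $\phi$ of $B$ with $\phi(b)=0$ must kill each $c_{l,n}c_{l,n}^*$, hence by the Cauchy--Schwarz inequality for states it kills $c_{l,n}x$ for all $x$, and a limiting argument using $MM^*AMM^*M\subseteq AM$ (this is exactly where the hypothesis $MBM^*\subseteq A$ enters) forces $\phi$ to vanish on a total subset of $B$, a contradiction. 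You instead verify by direct norm estimates that the countable family $\{u_{l,k}\}$ is an approximate unit for $B$, invoking $MBM^*\subseteq A$ at the analogous point (to place $n_ib$ in $\overline{[AM]}^{\|\cdot\|}$ so that $(I-e_k)n_ib\to 0$), and only then assemble a strictly positive element. Your route avoids states and Cauchy--Schwarz entirely, at the cost of quoting two standard facts: that $0\le a\le h'$ places $a$ in the hereditary subalgebra $\overline{hBh}$, and that $\overline{hB}=B$ characterizes strict positivity of $h$; the paper's route is shorter once one accepts the state characterization of strictly positive elements. One cosmetic slip in your last step: sufficiency yields $B\subseteq\overline{\bigcup_n u_nB}$ rather than $B\subseteq\bigcup_n\overline{u_nB}$, but since $\bigcup_n u_nB\subseteq\overline{hB}$ and the latter is closed, the desired conclusion $B=\overline{hB}$ follows all the same.
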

\begin{proof} 
Suppose that $(a_n)_{n\in \bb N}\subseteq A$ such that $\lim_n a_na=a\; \forall \;\;a\in\; A.$  
In addition, let $\{m_i: i\in \bb N\}\subseteq M$ be such that $\lim_l\sum _{i=1}^lm_i^*m_im^*=m^* \forall\; m\;\in M$  and
 $\nor{ \sum_{i=1}^l m_i^*m_i}\leq 1 \;\forall\; l.$ It suffices to prove 
that $B$ contains a strictly positive element. Define $$b= \sum_{l=1}^\infty \sum_{n=1}^\infty  \sum_{i,j=1}^l\frac{m_i^*a_nm_im_j^*a_n^*m_j}{
2^n2^l\|a_n\|^2}.$$ Because $$\nor{ \sum_{i,j=1}^l m_i^*a_nm_im_j^*a_n^*m_j  }=\nor{\sum_{i=1}^lm_i^*a_n^*m_i}^2\leq 
\|a_n\|^2,$$ we have that $$\sum_{l=1}^\infty \sum_{n=1}^\infty  \nor{\sum_{i,j=1}^l\frac{m_i^*a_nm_im_j^*a_n^*m_j }{2^n2^l\|a_n\|^2}}<+\infty .$$
Thus, the element $b$ is well defined. Observe that $b\geq 0$ if $\phi $ is a state of $B$ such that  
$$\phi (b)=0\Rightarrow \phi (\sum_{i,j=1}^l m_i^*a_nm_im_j^*a_n^*m_j  ) =0,\;\;\forall \;\; n,\;l.$$
If $a\in A, m,s,t,r,\in M,$ then the Cauchy-Schwartz inequality for $\phi $   implies that 
$$\phi (\sum_{i=1}^lm_i^*a_nm_im^*ast^*r)=0\;\; \forall\;\; n,\;l.$$
Because $$m_im^*ast^*r\in MM^*AMM^*M\subseteq AM,$$ we have that $$\lim_n a_nm_im^*st^*r=m_im^*st^*r.$$
Thus, $\phi (\sum_{i=1}^lm_i^*m_im^*astr^*)=0 ,\; \forall\; l.$ Because 
$$\lim_l\sum_{i=1}^lm_i^*m_im^*=m^*,$$ we have that $\phi (m^*ast^*r)=0$ for all $m,s, t,r\in M, a\in A.$ 
Because $B=\overline{[M^*AMM^*M]} ^{\|\cdot\|}$, we conclude that $\phi =0$. This contradiction shows that $b$ is strictly positive.

\end{proof}

\begin{lemma}\label{130} Let $E,F,M_1, M_2$ be TROs such that the algebra $\overline{[M_2^*M_2]} ^{\|\cdot\|}$ is $\sigma $ -
unital and 
$$E=\overline{[M_2^*FM_1]} ^{\|\cdot\|},  \;\;\; F=\overline{[M_2EM_1^*]} ^{\|\cdot\|}.$$ 
If it also holds that the algebra  $ \overline{[EE^*]} ^{\|\cdot\|} $ is $\sigma $-unital, then the algebra $\overline{[FF^*]} ^{\|\cdot\|}
$  is also $\sigma $-unital. 
\end{lemma}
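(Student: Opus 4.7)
The plan is to reduce the lemma to Lemma~\ref{120}, applied to the $C^{*}$-algebras $A=\overline{[EE^{*}]}^{\|\cdot\|}$ and $B=\overline{[FF^{*}]}^{\|\cdot\|}$ with the linking TRO $M=M_{2}^{*}$. The main preparatory step is the identification
$$\overline{[FF^{*}]}^{\|\cdot\|}=\overline{[M_{2}EE^{*}M_{2}^{*}]}^{\|\cdot\|}.$$
Substituting $F=\overline{[M_{2}EM_{1}^{*}]}^{\|\cdot\|}$ yields $FF^{*}=\overline{[M_{2}EM_{1}^{*}M_{1}E^{*}M_{2}^{*}]}^{\|\cdot\|}$, and the TRO identity $\overline{[M_{1}^{*}M_{1}M_{1}^{*}]}=M_{1}^{*}$ together with $E=\overline{[M_{2}^{*}FM_{1}]}^{\|\cdot\|}$ give $\overline{[EM_{1}^{*}M_{1}]}=E$, collapsing the middle back to $EE^{*}$. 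For the reverse inclusion, substitute $EE^{*}=\overline{[M_{2}^{*}FM_{1}M_{1}^{*}F^{*}M_{2}]}^{\|\cdot\|}$ and use the symmetric identities $\overline{[M_{2}M_{2}^{*}F]}=F$ and $\overline{[FM_{1}M_{1}^{*}]}=F$ (and their adjoints) to absorb the extra factors back into $F$ and $F^{*}$. The same bookkeeping yields the ``$MBM^{*}\subseteq A$'' condition of Lemma~\ref{120}, namely $M_{2}^{*}FF^{*}M_{2}\subseteq\overline{[EE^{*}]}^{\|\cdot\|}$.

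The remaining hypothesis of Lemma~\ref{120} is that $M_{2}^{*}$ be a $\sigma$-TRO; by the remark following Definition~\ref{110} this amounts to $\sigma$-unitality of both corners $\overline{[M_{2}^{*}M_{2}]}^{\|\cdot\|}$ and $\overline{[M_{2}M_{2}^{*}]}^{\|\cdot\|}$. The first is the hypothesis of the lemma, and for the second I would invoke Brown's Theorem~2.1 from \cite{brown} applied to the linking $C^{*}$-algebra $L(M_{2})$: since $\overline{[M_{2}^{*}M_{2}]}$ sits in $L(M_{2})$ as a full, $\sigma$-unital hereditary subalgebra, Brown's stable-isomorphism theorem forces $L(M_{2})$ itself to be $\sigma$-unital, whence its other corner $\overline{[M_{2}M_{2}^{*}]}$ is $\sigma$-unital as well.

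Once these ingredients are assembled, Lemma~\ref{120} with $A=\overline{[EE^{*}]}^{\|\cdot\|}$, $B=\overline{[FF^{*}]}^{\|\cdot\|}$, $M=M_{2}^{*}$ immediately delivers the $\sigma$-unitality of $B$, which is exactly the conclusion of the lemma. The main obstacle I anticipate is this last step: lifting the one-sided $\sigma$-unitality of $\overline{[M_{2}^{*}M_{2}]}$ to the two-sided $\sigma$-TRO condition required by Lemma~\ref{120}; without Brown's theorem there seems to be no direct TRO manipulation that achieves it. The preceding computations are otherwise essentially formal, requiring only careful composition order and repeated use of the norm-closure identities $\overline{[NN^{*}N]}=N$ for the various TROs $M_{1},M_{2},E,F$.
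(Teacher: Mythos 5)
Your first paragraph is sound and follows the paper's opening move: the paper likewise begins from the identity $\overline{[FF^*]}^{\|\cdot\|}=\overline{[M_2EE^*M_2^*]}^{\|\cdot\|}$ and then, rather than quoting Lemma \ref{120} as a black box, repeats its construction, forming $b=\sum_{l,n}\sum_{i,j\le l}2^{-n}2^{-l}\|a_n\|^{-2}\,m_ia_nm_i^*m_ja_n^*m_j^*$ from a countable approximate unit $(a_n)$ of $\overline{[EE^*]}^{\|\cdot\|}$ and a sequence $\{m_i\}\subseteq M_2$ with $\|\sum_{i\le l}m_im_i^*\|\le 1$ and $\sum_{i\le l}m_im_i^*m\to m$ for $m\in M_2$, and shows $b$ is strictly positive in $\overline{[FF^*]}^{\|\cdot\|}$. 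So the intended route is indeed "the proof of Lemma \ref{120}, adapted," and your algebraic bookkeeping ($\overline{[EM_1^*M_1]}=E$, $M_2^*FF^*M_2\subseteq\overline{[EE^*]}^{\|\cdot\|}$, etc.) is correct.

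The gap is exactly where you anticipated it, and it is not repairable by Brown's theorem. Brown's Theorem 2.1 takes $\sigma$-unitality of a corner as a \emph{hypothesis} and characterizes it by the existence of a sequence $\{m_i\}$ with $\sum_{i\le l}m_im_i^*$ (respectively $\sum_{i\le l}m_i^*m_i$) a bounded approximate unit; it does not propagate $\sigma$-unitality from one full corner of the linking algebra to the other. The implication you invoke is false: for $M_2=B(\bb C,H)$ with $H$ non-separable, $\overline{[M_2^*M_2]}^{\|\cdot\|}=\bb C$ is unital, yet $\overline{[M_2M_2^*]}^{\|\cdot\|}$ is the algebra of compact operators on $H$, which has no strictly positive element, and the linking algebra is not $\sigma$-unital. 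Hence you cannot conclude that $M_2^*$ is a $\sigma$-TRO, and Lemma \ref{120} cannot be applied as stated. The problem is compounded by the fact that the corner the proof of Lemma \ref{120} actually uses (via the sequence satisfying $\sum_{i\le l}m_i^*m_im^*\to m^*$) is $\overline{[M^*M]}^{\|\cdot\|}$, which for $M=M_2^*$ equals $\overline{[M_2M_2^*]}^{\|\cdot\|}$ --- precisely the corner your argument fails to produce. What the construction genuinely requires, and what the paper's own proof uses through its choice of $\{m_i\}\subseteq M_2$ with $\sum_{i\le l}m_im_i^*m\to m$, is $\sigma$-unitality of $\overline{[M_2M_2^*]}^{\|\cdot\|}$ (note that the corner named in the printed hypothesis is the other one, so you should check carefully which corner is available in the intended application before relying on either). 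As it stands, your proof does not close.
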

\begin{proof} Observe that $$ \overline{[FF^*]} ^{\|\cdot\|} =\overline{[M_2EE^*M_2^*]} ^{\|\cdot\|} .$$  
 Let $\{m_i: i\in \bb N\}\subseteq M_2$ be such 
that $\sum_{i=1}^lm_im_i^*m=m \forall\; m\;\in\; M_2,$$$
\nor{\sum_{i=1}^lm_im_i^*}\leq 1\;\forall \;l,$$ and let $(a_n)_n\subseteq \overline{[EE^*]} ^{\|\cdot\|} $ be a $\sigma $-unit.
As in Lemma \ref{120}, we can prove that the element $$b= 
\sum_{l=1}^\infty \sum_{n=1}^\infty  \sum_{i,j=1}^l\frac{m_ia_nm_i^*m_ja_n^*m_j^*}{
2^n2^l\|a_n\|^2}$$ is strictly positive in  $\overline{[FF^*]} ^{\|\cdot\|}.$ Thus,   $\overline{[FF^*]} ^{\|\cdot\|}$  is  $\sigma $-unital. 
\end{proof}

\begin{lemma}\label{140} Let $E, F, M_1, M_2$ be TROs such that $M_1, M_2, F$ are $\sigma $-TROs and $$E=
\overline{[M_2FM_1^*]} ^{\|\cdot\|}, \;\; M_2^*EM_1\subseteq F.$$ Then, $E$ is a $\sigma $-TRO.
\end{lemma}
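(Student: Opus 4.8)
The plan is to check the two conditions of the Remark: that the $C^*$-algebras $\overline{[EE^*]}^{\|\cdot\|}$ and $\overline{[E^*E]}^{\|\cdot\|}$ are $\sigma$-unital. First I would note that the hypotheses are invariant under the substitution $(E,F,M_1,M_2)\mapsto(E^*,F^*,M_2,M_1)$: taking adjoints turns $E=\overline{[M_2FM_1^*]}^{\|\cdot\|}$ into $E^*=\overline{[M_1F^*M_2^*]}^{\|\cdot\|}$ and $M_2^*EM_1\subseteq F$ into $M_1^*E^*M_2\subseteq F^*$, while $F^*$ is again a $\sigma$-TRO. Since this substitution interchanges $\overline{[EE^*]}^{\|\cdot\|}$ and $\overline{[E^*E]}^{\|\cdot\|}$, it suffices to prove that $\overline{[EE^*]}^{\|\cdot\|}$ is $\sigma$-unital; the other case is then identical.

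Next I would exploit the non-degeneracy built into $E=\overline{[M_2FM_1^*]}^{\|\cdot\|}$. From the TRO identities $\overline{[M_2M_2^*M_2]}^{\|\cdot\|}=M_2$ and $\overline{[M_1M_1^*M_1]}^{\|\cdot\|}=M_1$ (valid because $M_1,M_2$ are $\sigma$-TROs) one obtains $\overline{[M_2M_2^*E]}^{\|\cdot\|}=E=\overline{[EM_1M_1^*]}^{\|\cdot\|}$. In particular $F_1:=\overline{[FM_1^*M_1]}^{\|\cdot\|}$ is a TRO (since $F$ is), so $\mathcal A:=\overline{[FM_1^*M_1F^*]}^{\|\cdot\|}=\overline{[F_1F_1^*]}^{\|\cdot\|}$ is a $C^*$-algebra, and clearly $\overline{[EE^*]}^{\|\cdot\|}=\overline{[M_2\,\mathcal A\,M_2^*]}^{\|\cdot\|}$. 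Moreover $\overline{[EM_1M_1^*]}^{\|\cdot\|}=E$ together with $M_2^*EM_1\subseteq F$ yields $\overline{[M_2^*E]}^{\|\cdot\|}=\overline{[M_2^*EM_1M_1^*]}^{\|\cdot\|}\subseteq\overline{[FM_1^*]}^{\|\cdot\|}$, whence $M_2^*\,\overline{[EE^*]}^{\|\cdot\|}\,M_2\subseteq\mathcal A$. Since $M_2^*$ is a $\sigma$-TRO, Lemma \ref{120} (applied to the $\sigma$-TRO $M_2^*$ and the algebra $\mathcal A$) reduces everything to a single statement: the $C^*$-algebra $\mathcal A=\overline{[FM_1^*M_1F^*]}^{\|\cdot\|}$ is $\sigma$-unital.

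To establish this I would imitate the construction of Lemma \ref{120}. Let $\{f_k\}\subseteq F$ be the left $\sigma$-TRO sequence of $F$, so $\sum_{k=1}^l f_kf_k^*f\to f$ for all $f\in F$ and $\nor{\sum_{k=1}^l f_kf_k^*}\leq 1$, and let $(c_n)$ be a $\sigma$-unit of the $\sigma$-unital algebra $\overline{[M_1^*M_1]}^{\|\cdot\|}$. I would then form the positive, absolutely convergent series $b=\sum_{l,n}\frac{1}{2^{l}2^{n}\|c_n\|^{2}}\left(\sum_{k=1}^{l} f_kc_nf_k^{*}\right)^{2}$, whose summands lie in $\mathcal A$, and attempt to show that $b$ is strictly positive: given a state $\phi$ of $\mathcal A$ with $\phi(b)=0$, the Cauchy–Schwarz inequality forces $\phi\big(y\sum_{k=1}^l f_kc_nf_k^*\big)=0$ for every $y\in\mathcal A$ and all $l,n$, and one tries to conclude $\phi=0$ by peeling off the $f_k$ (via $\sum f_kf_k^*f\to f$) and then the $c_n$ (via the $\sigma$-unit property).

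The step I expect to be the main obstacle is precisely this last verification. The difficulty --- and the reason Lemma \ref{130} cannot simply be quoted --- is that $F$ need not coincide with $\overline{[M_2^*EM_1]}^{\|\cdot\|}$, so $(E,F)$ is not a genuine $\sigma\Delta$-pair; consequently the ``middle'' algebra $\overline{[M_1^*M_1]}^{\|\cdot\|}$ need not be compatible with $\overline{[F^*F]}^{\|\cdot\|}$, and $\{f_k\}$ only furnishes an approximate unit that \emph{a priori} lives in $\overline{[FF^*]}^{\|\cdot\|}$ rather than inside $\mathcal A$. One must therefore keep careful track of which subspace each approximate unit actually acts on, using the containment $\overline{[M_2^*EM_1]}^{\|\cdot\|}\subseteq F$ and the relations $\overline{[M_2M_2^*E]}^{\|\cdot\|}=E=\overline{[EM_1M_1^*]}^{\|\cdot\|}$ to guarantee that the support of $b$ fills out all of $\mathcal A$; this bookkeeping, ensuring that the constructed element is genuinely strictly positive in the correct algebra, is the technical heart of the proof.
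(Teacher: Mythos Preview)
Your argument breaks at the sentence ``$F_1:=\overline{[FM_1^*M_1]}^{\|\cdot\|}$ is a TRO (since $F$ is)''. For a TRO $F$ and a $C^*$-algebra $C$ acting on the domain of $F$, the space $\overline{[FC]}^{\|\cdot\|}$ need \emph{not} be a TRO. A small example: in $M_3(\bb C)$ take $F=\bb C e_{11}\oplus\bb C e_{22}$ and $C=\bb C\,p$ with $p$ the orthogonal projection onto $(1,1,1)^{\perp}$. Writing $u=e_{11}p$ and $v=e_{22}p$, one computes $uv^*\in\bb C\,e_{12}$, while $e_{12}v$ has $e_{11}$- and $e_{12}$-coefficients in the ratio $-1:2$, not the ratio $2:-1$ carried by $u$; hence $uv^*v\notin\operatorname{span}\{u,v\}=\overline{[FC]}$. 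So your $\mathcal A=\overline{[F_1F_1^*]}^{\|\cdot\|}$ is not known to be a $C^*$-algebra, and Lemma~\ref{120} cannot be invoked with it. (The extra hypothesis $M_2^*EM_1\subseteq F$ might rescue the claim, but you do not use it here, and any such argument would itself be the nontrivial part of the proof.) On top of this, you leave the $\sigma$-unitality of $\mathcal A$ as an acknowledged ``main obstacle'' rather than a proof, so the proposal is incomplete on two separate counts.

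The paper sidesteps both issues with a single device: it passes to the linking $C^*$-algebras
\[
\Pi(E)=\begin{pmatrix}\overline{[E^*E]}&E^*\\ E&\overline{[EE^*]}\end{pmatrix},\qquad
\Pi(F)=\begin{pmatrix}\overline{[F^*F]}&F^*\\ F&\overline{[FF^*]}\end{pmatrix},
\]
and to the $\sigma$-TRO $M_1\oplus M_2$. Since $F$ is a $\sigma$-TRO, $\Pi(F)$ is $\sigma$-unital; one then checks $(M_1\oplus M_2)\Pi(F)(M_1\oplus M_2)^*=\Pi(E)$ and $(M_1\oplus M_2)^*\Pi(E)(M_1\oplus M_2)\subseteq\Pi(F)$, so a \emph{single} application of Lemma~\ref{120} makes $\Pi(E)$ $\sigma$-unital, and hence both $\overline{[EE^*]}^{\|\cdot\|}$ and $\overline{[E^*E]}^{\|\cdot\|}$ are $\sigma$-unital at once. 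The point is that $\Pi(F)$ is a $C^*$-algebra by construction, so no intermediate object like your $\mathcal A$ ever needs to be shown to be one; your symmetry reduction and two-step factorisation through $M_2$ are replaced by the linking-algebra packaging.
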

\begin{proof}
It suffices to prove that the $C^*$-algebras $ \overline{[EE^*]} ^{\|\cdot\|},  \overline{[E^*E]} ^{\|\cdot\|} $   are $\sigma $-unital. Define the $C^*$-algebras 
$$ \Pi (E)=\left(\begin {array}{clr} \overline{[E^*E]} ^{\|\cdot\|} & E^*\\ E&  \overline{[EE^*]} ^{\|\cdot\|} \end {array}\right), \;\;\;
\Pi (F)=\left(\begin {array}{clr} \overline{[F^*F]} ^{\|\cdot\|} & F^*\\ F&  \overline{[FF^*]} ^{\|\cdot\|} \end {array}\right). $$ 
Because $F$ is a $\sigma $-TRO, the algebra $\Pi (F)$ is $\sigma $-unital. Furthermore, 
it easy to see that $$ (M_1\oplus M_2)\Pi (F)(M_1\oplus M_2)^*=\Pi (E) $$ and
 $$(M_1\oplus M_2)^*\Pi (E)(M_1\oplus M_2)^*\subseteq \Pi (F) .$$ Lemma \ref{120} implies that $\Pi (E)$ is $\sigma $-unital. Thus, 
 the $C^*$-algebras $ \overline{[EE^*]} ^{\|\cdot\|},  \overline{[E^*E]} ^{\|\cdot\|} $   are $\sigma $-unital. 
\end{proof}

\begin{lemma}\label{150} Let $H, K,  L$ be Hilbert spaces, $M\subseteq B(H,K), N\subseteq B(K, L)$ be $\sigma $-TROs, and $D$ be the $C^*$ algebra generated by the sets $MM^*, N^*N.$ Then, $T=\overline{[NDM]} ^{\|\cdot\|}$ 
is a $\sigma $-TRO.
\end{lemma}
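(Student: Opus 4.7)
The natural strategy is to invoke Lemma \ref{140} with the assignment $M_1 = M^*$, $M_2 = N$, $F = D$, $E = T$, because the defining formula $E = \overline{[M_2 F M_1^{*}]}^{\|\cdot\|}$ then specializes to $T = \overline{[NDM]}^{\|\cdot\|}$. To apply the lemma I need (i) that $N$ and $M^*$ are $\sigma$-TROs, (ii) that $D$ is a $\sigma$-TRO, and (iii) the containment $M_2^* E M_1 = N^* T M^* \subseteq D$. Item (i) is immediate by swapping the roles of the two sequences in Definition \ref{110}. Item (iii) reduces to $N^*NDMM^* \subseteq D \cdot D \cdot D \subseteq D$, which holds because both $N^*N$ and $MM^*$ lie in $D$ and $D$ is closed under products.

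The main work is item (ii): showing that the $C^*$-algebra $D$ is $\sigma$-unital, whence it is automatically a $\sigma$-TRO (take the same approximating sequence on both sides of Definition \ref{110}). This is where I expect the main obstacle. The plan is to exhibit a strictly positive element of $D$ directly. Since $M$ is a $\sigma$-TRO, the remark after Definition \ref{110} gives that $\overline{[MM^*]}^{\|\cdot\|}$ is $\sigma$-unital, so it contains a strictly positive element $e$; similarly $\overline{[N^*N]}^{\|\cdot\|}$ contains a strictly positive $f$. I claim that $e + f$, which visibly lies in $D$ and is positive, is in fact strictly positive in $D$.

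To establish the claim, let $\phi$ be a state of $D$ with $\phi(e+f)=0$. Positivity forces $\phi(e) = \phi(f) = 0$, and since $e$ (resp.\ $f$) is strictly positive in the $C^*$-subalgebra $\overline{[MM^*]}^{\|\cdot\|}$ (resp.\ $\overline{[N^*N]}^{\|\cdot\|}$), the restriction of $\phi$ to each of these subalgebras is zero. Applying the Cauchy-Schwartz inequality for states, $|\phi(a^* x)|^2 \leq \phi(aa^*)\phi(x^* x)$, one deduces $\phi(ax) = 0$ for every $x \in D$ and every $a \in MM^* \cup N^*N$ (using that these sets are self-adjoint). Because $MM^* \cup N^*N$ generates $D$ as a $C^*$-algebra, every element of $D$ is a norm limit of linear combinations of products each of which contains at least one factor from $MM^* \cup N^*N$; hence $\phi \equiv 0$, contradicting $\|\phi\| = 1$.

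Finally, before quoting Lemma \ref{140} I should confirm that $T$ is at all a TRO in $B(H,L)$: a direct substitution gives
\[
 TT^*T \;\subseteq\; \overline{[N D (MM^*) D (N^*N) D M]}^{\|\cdot\|} \;\subseteq\; \overline{[NDM]}^{\|\cdot\|} \;=\; T,
\]
since the three bracketed factors lie in $D$. With (i), (ii), (iii) in hand, Lemma \ref{140} delivers the conclusion. The only genuine difficulty is the $\sigma$-unitality of the generated $C^*$-algebra $D$, which the state argument above resolves.
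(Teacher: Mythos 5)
Your proof is correct; it follows the paper's overall skeleton (verify $TT^*T\subseteq T$, show the $C^*$-algebra $D$ is $\sigma$-unital, then apply Lemma \ref{140} with $M_1=M^*$, $M_2=N$, $F=D$, $E=T$), but it handles the one genuinely delicate step --- the $\sigma$-unitality of $D$ --- by a different and more economical argument. The paper introduces the auxiliary column TRO $Z$ with entries $\overline{[M^*D]}^{\|\cdot\|}$ and $\overline{[ND]}^{\|\cdot\|}$, verifies that $\overline{[ZZ^*]}^{\|\cdot\|}$ is $\sigma$-unital and that $\overline{[Z^*Z]}^{\|\cdot\|}=D$, and then transfers $\sigma$-unitality to $D$ via Lemma \ref{130}, whose proof manufactures a strictly positive element as a double series. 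You instead exhibit a strictly positive element of $D$ directly as $e+f$, with $e$ and $f$ strictly positive in $\overline{[MM^*]}^{\|\cdot\|}$ and $\overline{[N^*N]}^{\|\cdot\|}$ (these are $\sigma$-unital by the remark following Definition \ref{110}); a state annihilating $e+f$ must annihilate both subalgebras, and Cauchy--Schwarz applied to the leading factor of each word in the generators then forces the state to vanish on all of $D$. This bypasses $Z$ and Lemma \ref{130} entirely and uses only the standard equivalence of $\sigma$-unitality with the existence of a strictly positive element, the same device the paper itself uses inside Lemmas \ref{120} and \ref{130}. Two cosmetic points: the Cauchy--Schwarz display should read $|\phi(ax)|^2\le\phi(aa^*)\phi(x^*x)$ (harmless here, since $MM^*$ and $N^*N$ are self-adjoint and stable under multiplication, so $\phi$ kills $aa^*$ and $a^*a$ alike), and the assertion that a $\sigma$-unital $C^*$-algebra is a $\sigma$-TRO is better justified by the remark after Definition \ref{110} than by ``taking the same sequence on both sides,'' since Definition \ref{110} demands approximate units of the special form $\sum_i m_im_i^*$.
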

\begin{proof} We have that $$NDMM^*DN^*NDM\subseteq NDM.$$ Thus, $TT^*T\subseteq T$, and so $T$ is a TRO. We define the TRO 
$$Z=\left(\begin{array}{clr} \overline{[M^*D]}  ^{\|\cdot\|}\\ \overline{[ND]} ^{\|\cdot\|}\end{array}\right).$$ Then, 
$$ZZ^*=\left(\begin{array}{clr} M^*DM & M^*DN^* \\ NDM & NDN^*\end{array}\right).$$ 
Let $$\{m_i: i\in \bb N\}\subseteq M,\;\; \{n_i: i\in \bb N\}\subseteq N$$ be such that
 $$\nor{\sum _{i=1}^lm_i^*m_i}\leq 1, \;\; \nor{\sum _{i=1}^ln_in_i^*}\leq 1, \forall l$$ and  
$$ \lim_l \sum _{i=1}^lm_i^*m_i m^*=m^*  \;\forall\; m\;\in M,\;\; \lim_l\sum _{i=1}^ln_in_i^*n=n\; \forall\; n\;\in \;N. $$ 
The elements $$a_l= \left(\begin{array}{clr} \sum _{i=1}^lm_i^*m_i  & 0 \\ 0 & \sum _{i=1}^ln_in_i^*\end{array} \right), l\in \bb N$$  
 belong to $$\left(\begin{array}{clr} M^*M  & 0 \\ 0 &  NN^* \end{array} \right)\subseteq ZZ^*, $$ and satisfy 
$\lim_l a_lx=x, \forall x\in \overline{[ZZ^*]} ^{\|\cdot\|}.$ Thus, $ \overline{[ZZ^*]} ^{\|\cdot\|}  $ is a $\sigma $-unital $C^*$- algebra. 
Now, we have that $Z=\overline{[ZD]} ^{\|\cdot\|}$ and $$\overline{[Z^*Z]} ^{\|\cdot\|}= \overline{DMM^*D+DNN^*D} ^{\|\cdot\|}.$$ We can easily see that 
$D=\overline{[MM^*D+N^*ND]} ^{\|\cdot\|}$, and thus $$\overline{[Z^*Z]} ^{\|\cdot\|}  =\overline{DD} ^{\|\cdot\|}=D.$$ 
Now, apply Lemma \ref{130} for $$M_1=\bb C,\; M_2=Z^*,\; E=Z,\; F=D. $$ We obtain that 
$$ \overline{[M_2EM_1^*]} ^{\|\cdot\|}=\overline{[Z^*Z]} ^{\|\cdot\|}=D=F,\;\;\; \overline{[M_2^*FM_1]} ^{\|\cdot\|}=\overline{[ZD]} ^{\|\cdot\|}=Z=E, $$

$\overline{[M^*_2M_2]} ^{\|\cdot\|}=\overline{[ZZ^*]} ^{\|\cdot\|}=\overline{[EE^*]} ^{\|\cdot\|}$ is $\sigma $-unital. Lemma \ref{130} implies that 
$\overline{[FF^*]} ^{\|\cdot\|}$ is 
$\sigma $-unital, and thus $D$ is $\sigma $-unital $C^* $-algebra. Now,
$$\overline{[NDM]} ^{\|\cdot\|}=T, \;\;N^*TM^*=N^*NDMM^*\subseteq D.$$ Lemma \ref{140} implies that $T$ is a $\sigma $-TRO.
\end{proof}

\section{   $\sigma $-strong $\Delta $-equivalence}\label{xxxxxx}

\begin{definition} \label{221}Let $A$ and $B$ be operator algebras acting on the Hilbert spaces $H$ and $L$, respectively. We call them 
$\sigma $-strongly $TRO $-equivalent if there exists a $\sigma $-TRO $M\subseteq B(L, H)$ such that 
$$A=\overline{[M^*BM]}^{\|\cdot\|} , \;\; B=\overline{[MAM^*] }^{\|\cdot\|}. $$ In this case, we write $A\sim _{\sigma TRO}B.$ 
\end{definition}

\begin{definition} \label{222}Let $A$ and $B$ be operator algebras. We call these 
$\sigma $-strongly $\Delta  $-equivalent if there exist completely isometric homomorphisms $\alpha: A\rightarrow  \alpha(A), \; \beta:
B\rightarrow  \beta (B)$ such that $\alpha (A)\sim _{\sigma TRO}\beta (B).$ 
In this case, we write $A\sim _{\sigma \Delta }B.$ 
\end{definition}

\begin{theorem}\label{223} Let $A, B$ be $\sigma $-strongly $\Delta  $-equivalent operator algebras. Then, for every completely 
isometric homomorphism  $\alpha: A\rightarrow  \alpha(A)$ there exists a completely isometric homomorphism $\beta:
B\rightarrow  \beta (B)$ such that $\alpha (A)\sim _{\sigma TRO}\beta (B).$ 
\end{theorem}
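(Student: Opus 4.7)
Let $\alpha_0: A\to B(H_0)$, $\beta_0: B\to B(K_0)$ and a $\sigma$-TRO $M_0\subseteq B(K_0,H_0)$ witness the hypothesis, and let $\alpha: A\to\alpha(A)\subseteq B(H)$ be the arbitrary given completely isometric homomorphism. My plan is to transport the data $(\beta_0,M_0)$ along the completely isometric isomorphism $\varphi := \alpha\circ\alpha_0^{-1}:\alpha_0(A)\to\alpha(A)$ to produce the required $\beta$ and a $\sigma$-TRO $M$. First I would assemble the non-self-adjoint linking algebra
$$
\mathcal{L}_0 \;=\; \begin{pmatrix} \alpha_0(A) & M_0^* \\ M_0 & \beta_0(B) \end{pmatrix} \;\subseteq\; B(H_0\oplus K_0),
$$
and verify it is an operator algebra. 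Applying the contractive approximate identities of $A$ and $B$ to the defining identities for $M_0$ yields $M_0^*M_0\subseteq\alpha_0(A)$ and $M_0M_0^*\subseteq\beta_0(B)$, together with the bimodule absorptions $\alpha_0(A)M_0^*\subseteq M_0^*$, $M_0\alpha_0(A)\subseteq M_0$ and the $\beta_0(B)$-analogues, from which matrix multiplication on $\mathcal{L}_0$ closes up. The central step is then to extend $\varphi$ to a completely isometric homomorphism $\rho:\mathcal{L}_0\to B(H\oplus L)$, for some Hilbert space $L$, acting as $\varphi$ on the $(1,1)$-corner.

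Granted such a $\rho$, I would read off the required data by taking $\beta:B\to B(L)$ to be the $(2,2)$-component of $b\mapsto\rho(0\oplus\beta_0(b))$ and $M:=\rho(M_0)\subseteq B(L,H)$ to be the $(2,1)$-corner image. Pushing the two defining identities through $\rho$ yields $\alpha(A)=\overline{[M^*\beta(B)M]}^{\|\cdot\|}$ and $\beta(B)=\overline{[M\alpha(A)M^*]}^{\|\cdot\|}$. The $\sigma$-TRO property of $M$ would be inherited from $M_0$: the approximating sequences $\{m_i\},\{n_i\}\subseteq M_0$ from Definition \ref{110} have images $\{\rho(m_i)\},\{\rho(n_i)\}\subseteq M$ satisfying the same norm bounds by complete isometry, and the relevant limits pass through $\rho$ by continuity. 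If required, one can invoke Lemma \ref{150} or Lemma \ref{140} applied to the image TROs to cleanly confirm that the $\sigma$-TRO structure survives.

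The main obstacle is the extension step: constructing $\rho$ on all of $\mathcal{L}_0$ from its prescribed value $\varphi$ on the $(1,1)$-corner. For $C^*$-algebras this is essentially the rigidity of Morita contexts, but in the non-self-adjoint setting it requires more care. A natural route is to build $\rho$ directly, using the right $\alpha_0(A)$-Hilbert-module structure on $M_0$ (with inner product $\langle m,n\rangle = m^*n \in \alpha_0(A)$) together with $\varphi$ to induce a representation of $\beta_0(B)$ on a Hilbert-module tensor product, which supplies both $L$ and $\rho$. An alternative would be to pass to the $C^*$-envelope of $\mathcal{L}_0$, where the extension of $\varphi$ exists by $C^*$-algebraic rigidity, and then restrict back to $\mathcal{L}_0$. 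In either approach the complete isometry of $\rho$ is forced by the complete isometry of $\varphi$ and the functoriality of the construction, and the closure identities defining $\sim_{\sigma TRO}$ are then automatic from those for $(\alpha_0(A),\beta_0(B),M_0)$.
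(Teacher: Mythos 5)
Your overall strategy---transporting $(\beta_0,M_0)$ along $\varphi=\alpha\circ\alpha_0^{-1}$ by inducing a representation through a module tensor product---is the right one, and your ``route (a)'' is in fact essentially the paper's proof: the paper forms the bimodule $Y=\overline{[BMA]}^{\|\cdot\|}$, takes the $A$-balanced Haagerup tensor product $K=Y\otimes^h_A H$ (a Hilbert space), lets $B$ act by left multiplication to obtain $\beta$, and sends $M$ into $B(H,K)$ via $\mu(m)(\alpha(a)h)=(ma)\otimes h$, quoting Lemma 2.10 and Theorem 2.12 of \cite{elehoust} for the complete isometry of $\beta$ and the two closure identities. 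The problem is that the step you yourself label ``the main obstacle'' is precisely the substance of the theorem, and neither of your two routes is actually carried out. Route (b) is not viable as stated: there is no general rigidity principle that extends a completely isometric isomorphism of the $(1,1)$-corner of a non-self-adjoint linking algebra to its $C^*$-envelope; establishing such an extension is essentially equivalent to the statement you are trying to prove, so invoking it is circular.

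There is also a concrete error in your preparatory step. The inclusions you claim for $\mathcal{L}_0$, namely $M_0^*M_0\subseteq\alpha_0(A)$ and $M_0\alpha_0(A)\subseteq M_0$, do \emph{not} follow from applying approximate identities to the defining identities, because the approximate identities converge only strictly, not in norm, against elements of $M_0$. They are false in general: let $A=B$ be the algebra of compact operators on an infinite-dimensional Hilbert space $H$ and $M_0=\bb C\, I_H$. Then $M_0$ is a $\sigma$-TRO with $M_0^*BM_0=A$ and $M_0AM_0^*=B$, yet $M_0^*M_0=\bb C\, I_H\not\subseteq A$ and $M_0A=A\not\subseteq M_0$, so your $\mathcal{L}_0$ is not closed under multiplication and the $A$-valued inner product $\sca{m,n}=m^*n$ does not take values in $\alpha_0(A)$. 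This is exactly why the paper induces from $Y=\overline{[BMA]}^{\|\cdot\|}$ rather than from $M$ itself: $Y$ is a genuine nondegenerate $B$--$A$-bimodule, and the Haagerup module tensor product needs only the bimodule structure, not an $A$-valued inner product. If you replace your linking algebra by the one built from $Y$ and then execute route (a) in detail (which amounts to reproving the cited results of \cite{elehoust}), the argument closes; as it stands, the proof has a genuine gap at its central step.
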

\begin{proof} We may assume that $H, L, M$ are as in Definition \ref{221}. By $Y$, we denote the space $Y=\overline{[BMA]}
^{\|\cdot\|}.$ Let $K$ be the $A$-balanced Haagerup tensor product 
$K=Y\otimes ^h_AH.$ This is a Hilbert space \cite{bmp}. Define 
$$\beta : B\rightarrow B(K), \;\;\;\beta (b)(y\otimes h)=(by)\otimes h.$$ 
By Lemma 2.10 in \cite{elehoust}, $\beta $ is a completely isometric homomorphism. From the same article, if $m\in M,$ we define 
$$\mu (m): H\rightarrow K, \; \; \mu (m)(\alpha (a)(h))=(ma)\otimes h.$$ The map $\mu: M \rightarrow \mu (M)$ 
is a TRO homomorphism. Thus, $\mu (M)$ is a $\sigma $-TRO. By Theorem 2.12 in \cite{elehoust}, we have that
$$\alpha (A)=\overline{[\mu (M)^*\beta (B)\mu (M)]}^{\|\cdot\|} , \;\; \beta (B)=\overline{[\mu (M)\alpha (A)\mu (M)^*] }^{\|\cdot\|}. $$ 
The proof is complete.
\end{proof}

\begin{theorem}\label{224} The $\sigma $-strong $\Delta  $-equivalence of  operator algebras is an equivalence relation 
in the class of operator algebras. 
\end{theorem}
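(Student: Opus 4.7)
\emph{Reflexivity and symmetry.} For reflexivity I take any non-degenerate completely isometric representation $\alpha:A\to B(H)$ and let $M=\mathbb{C}\cdot I_H$. This is trivially a $\sigma$-TRO in the sense of Definition \ref{110} (take $m_1=n_1=I_H$ and all other $m_i,n_i$ equal to zero), and both sides of (\ref{giasemi}) collapse to $\alpha(A)$. Symmetry is immediate: if $M$ witnesses $\alpha(A)\sim_{\sigma TRO}\beta(B)$, then $M^*$ is a $\sigma$-TRO witnessing the reverse.

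\emph{Transitivity: alignment.} Suppose $A\sim_{\sigma\Delta}B$ is implemented by a $\sigma$-TRO $M$ linking $\alpha(A)\subseteq B(H)$ and $\beta(B)\subseteq B(K)$, and that $B\sim_{\sigma\Delta}C$. The first step is to align the representations of $B$ used in the two equivalences: apply Theorem \ref{223} to $B\sim_{\sigma\Delta}C$ with this same $\beta$ playing the role of the prescribed completely isometric representation. This produces a completely isometric $\gamma:C\to\gamma(C)\subseteq B(L)$ and a $\sigma$-TRO $N$ with
\[
\beta(B)=\overline{[N^*\gamma(C)N]}^{\|\cdot\|},\quad \gamma(C)=\overline{[N\beta(B)N^*]}^{\|\cdot\|},
\]
so that $B$ is now represented identically in both equivalences.

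\emph{Main construction.} Let $D$ be the $C^*$-algebra generated by $MM^*\cup N^*N$ and put $T=\overline{[NDM]}^{\|\cdot\|}$. Lemma \ref{150} is tailored precisely to this situation and directly yields that $T$ is a $\sigma$-TRO. What remains is to verify that $T$ witnesses $\alpha(A)\sim_{\sigma TRO}\gamma(C)$. Expanding one side,
\[
\overline{[T\alpha(A)T^*]}^{\|\cdot\|}=\overline{[ND\,\beta(B)\,DN^*]}^{\|\cdot\|},
\]
and symmetrically for the other side, so both required equalities reduce to the single identity $\overline{[D\beta(B)D]}^{\|\cdot\|}=\beta(B)$.

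\emph{Main obstacle.} This identity is where the work lies. The inclusion $\overline{[D\beta(B)D]}^{\|\cdot\|}\subseteq\beta(B)$ is routine: from $\beta(B)=\overline{[M\alpha(A)M^*]}^{\|\cdot\|}=\overline{[N^*\gamma(C)N]}^{\|\cdot\|}$ one reads off that $MM^*$ and $N^*N$ absorb $\beta(B)$ on either side, so the generators of $D$ do as well. For the reverse inclusion I would exploit the approximate-unit sequences $\{m_i\}\subseteq M$ of Definition \ref{110}: for $b\in\beta(B)$ the finite sums $\sum_{i=1}^{l}m_im_i^*\,b\,\sum_{j=1}^{l}m_jm_j^*$ lie in $D\beta(B)D$ and converge to $b$ in norm, because $\sum m_im_i^*$ acts as a two-sided approximate unit on $\overline{[M\alpha(A)M^*]}^{\|\cdot\|}=\beta(B)$. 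Granted this, $\overline{[ND\,\beta(B)\,DN^*]}^{\|\cdot\|}=\overline{[N\beta(B)N^*]}^{\|\cdot\|}=\gamma(C)$, the symmetric computation gives $\overline{[T^*\gamma(C)T]}^{\|\cdot\|}=\alpha(A)$, and transitivity follows.
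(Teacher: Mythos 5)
Your proof is correct and follows essentially the same route as the paper: align the representations of $B$ via Theorem \ref{223}, form $D$ and $T=\overline{[NDM]}^{\|\cdot\|}$, and invoke Lemma \ref{150}. The only difference is that you carry out the final verification $\overline{[T^*\gamma(C)T]}^{\|\cdot\|}=\alpha(A)$, $\overline{[T\alpha(A)T^*]}^{\|\cdot\|}=\gamma(C)$ explicitly (correctly reducing it to $\overline{[D\beta(B)D]}^{\|\cdot\|}=\beta(B)$), whereas the paper delegates this step to the proof of Theorem 2.1 in \cite{elehoust}.
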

\begin{proof} It suffices to prove the transitivity property. Let $A, B,$ and $C$ be operator algebras such that 
$A\sim _{\sigma \Delta }B$ and $B\sim _{\sigma \Delta }C.$ Therefore, there exists a $\sigma $-TRO $M$ and completely isometric homomorphisms 
$\alpha: A\rightarrow  \alpha(A), \; \beta:
B\rightarrow  \beta (B)$ such that  $$\alpha (A)=\overline{[M^*\beta (B)M]}^{\|\cdot\|} , \;\; 
\beta (B)=\overline{[M\alpha (A)M^*] }^{\|\cdot\|}. $$ 
 By Theorem \ref{223}, there exists a $\sigma $-TRO $N$ and a completely isometric homomorphism $\gamma: C \rightarrow \gamma (C)$ 
such that   $$\beta (B)=\overline{[N^*\gamma (C)N]}^{\|\cdot\|} , \;\; 
\gamma (C)=\overline{[N\beta (B)N^*] }  ^{\|\cdot\|} . $$
Let $D$ be the $C^*$-algebra generated by the set $\{MM^*\}\cup \{N^*N\}.$ By Lemma \ref{150}, the 
space  $T=\overline{[NDM]} ^{\|\cdot\|}$ 
is a $\sigma $-TRO. As in the proof of Theorem 2.1 in \cite{elehoust}, we can prove that 
$$\alpha (A)=\overline{[T^*\gamma (C)T]} ^{\|\cdot\|} , \gamma (C)=\overline{[T\alpha (A)T^*]} ^{\|\cdot\|} .$$
Thus, $A\sim _{\sigma \Delta }C. $ 
\end{proof}

 \begin{theorem}\label{225} Let $A, B$ be operator algebras. Then, $A$ and $B$ are 
$\sigma $-strongly $\Delta  $-equivalent if and only if they are stably isomorphic.  
\end{theorem}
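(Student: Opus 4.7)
The plan is to prove both implications by reducing to the concrete setting and invoking transitivity (Theorem \ref{224}) together with the observation that every operator algebra is $\sigma\Delta$-equivalent to its own stabilization.

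For the direction $(\Rightarrow)$, after replacing $A$ and $B$ by $\alpha(A)$ and $\beta(B)$, we are given a $\sigma$-TRO $M$ satisfying (\ref{giasemi}). The sequences $\{m_i\}, \{n_i\}\subseteq M$ of Definition \ref{110} assemble into an infinite row and an infinite column that implement, via the construction in \cite[Theorem 3.2]{elehoust}, a completely isometric operator-algebra isomorphism between the stabilizations $\alpha(A)\otimes\cl K$ and $\beta(B)\otimes\cl K$. Since $\alpha, \beta$ are completely isometric algebra isomorphisms onto their images, this transfers to a stable isomorphism $A\otimes\cl K\cong B\otimes\cl K$.

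For the direction $(\Leftarrow)$, the key step is to show that every operator algebra $A$ satisfies $A\sim_{\sigma\Delta}A\otimes\cl K$. Fix a completely isometric representation $A\subseteq B(H)$, so that $A\otimes\cl K$ sits naturally in $B(H\otimes\ell^2)$, and consider
$$M=\{I_H\otimes\xi : \xi\in\ell^2\}\subseteq B(H,H\otimes\ell^2),$$
where each $\xi\in\ell^2$ is regarded as the column operator $\bb C\to\ell^2$, $1\mapsto\xi$. A direct calculation shows $MM^*M\subseteq M$, $M^*(A\otimes\cl K)M=A$, and $\overline{[MAM^*]}^{\|\cdot\|}=A\otimes\cl K$, the last equality because rank-one operators $\xi_1\xi_2^*$ span a norm-dense subspace of $\cl K$. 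Choosing $m_i=I_H\otimes e_i$ gives $\sum_{i=1}^l m_im_i^*=I_H\otimes p_l$ where $p_l$ projects onto $\mathrm{span}(e_1,\dots,e_l)$, so $\|\sum_{i=1}^l m_im_i^*\|\leq 1$ and $\sum m_im_i^* m\to m$ for every $m\in M$; choosing $n_1=I_H\otimes e_1$ and $n_i=0$ for $i\geq 2$ gives $n_1^*n_1=I_H$, so $mn_1^*n_1=m$ trivially. Hence $M$ is a $\sigma$-TRO, proving $A\sim_{\sigma\Delta}A\otimes\cl K$, and similarly $B\sim_{\sigma\Delta}B\otimes\cl K$.

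A completely isometric isomorphism $A\otimes\cl K\cong B\otimes\cl K$ is itself trivially a $\sigma\Delta$-equivalence (take $M=\bb C I$), so by transitivity (Theorem \ref{224}) we obtain
$$A\sim_{\sigma\Delta}A\otimes\cl K\sim_{\sigma\Delta}B\otimes\cl K\sim_{\sigma\Delta}B,$$
completing the argument. The main obstacle lies in the forward direction, where the actual construction of the completely isometric isomorphism of stabilizations from the $\sigma$-TRO data is technically involved; this is imported from \cite{elehoust}.
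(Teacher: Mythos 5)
Your proposal is correct and follows essentially the same route as the paper: the forward implication is imported from the earlier stable-isomorphism construction (the paper cites \cite[Theorem 4.6]{elekak} plus the Banach--Stone theorem rather than \cite[Theorem 3.2]{elehoust}, but this is the same machinery), and the converse is proved exactly as in the paper by exhibiting an explicit $\sigma$-TRO realizing $A\sim_{\sigma TRO}A\otimes\cl K$ (your column TRO $\{I_H\otimes\xi\}$ is just the adjoint-side version of the paper's row space $R_\infty$) and then applying transitivity from Theorem \ref{224}.
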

\begin{proof} We assume that  $M$  is a $\sigma $-TRO satisfying 
$$A=\overline{[M^*BM]}^{\|\cdot\|} , \;\; B=\overline{[MAM^*] }^{\|\cdot\|}. $$
Theorem 4.6 in \cite{elekak} implies that there exists a completely isometric onto linear map $K_\infty (A)\rightarrow K_\infty (B).$ 
By using the Banach-Stone theorem for operator algebras, we may assume that this map is also a homomorphism \cite[4.5.13]{bm}.

For the converse, suppose that $K_\infty (A)$ and $ K_\infty (B)$ are completely isometrically isomorphic  as operator algebras. Let $R_\infty $ 
be the space of infinite rows consisting of compact operators. Then, $R_\infty $ is a $\sigma $-TRO, and we have that
$$R_\infty K_\infty (A)R_\infty^*=A, \;\;\; \overline{[R_\infty ^*AR_\infty ]}^{\|\cdot\|}=K_\infty (A). $$
Thus, $A\sim _{\sigma TRO}K_\infty (A).$ Therefore, $A\sim _{\sigma \Delta }K_\infty (B).$
By the same arguments, $B\sim _{\sigma TRO}K_\infty (B).$ Therefore, Theorem \ref{224} implies that $A\sim _{\sigma \Delta }B.$   
 \end{proof}

\begin{corollary}\label{226} Rieffel's strong Morita equivalence of $C^*$-algebras is weaker than  
$\sigma $-strong $\Delta  $-equivalence. 
\end{corollary}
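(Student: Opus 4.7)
The corollary asserts a one-way implication plus strictness: every pair of $\sigma$-strongly $\Delta$-equivalent $C^*$-algebras is Rieffel Morita equivalent, but not conversely. My plan is to use Theorem \ref{225} to reduce the positive direction to stable isomorphism, and then use the classical Brown-Green-Rieffel theory both to obtain Morita equivalence from stable isomorphism and to produce a counterexample to the converse.

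For the forward direction, suppose $A\sim_{\sigma\Delta}B$ with $A,B$ $C^*$-algebras. By Theorem \ref{225}, the operator algebras $A\otimes\cl K$ and $B\otimes\cl K$ are completely isometrically isomorphic. Since both algebras are $C^*$-algebras, any completely isometric algebra isomorphism between them is automatically a $*$-isomorphism (this is the Banach-Stone type rigidity for $C^*$-algebras, cf.\ \cite[4.5.13]{bm}), so $A$ and $B$ are stably $*$-isomorphic. Stable $*$-isomorphism of $C^*$-algebras always implies Rieffel's strong Morita equivalence, with an imprimitivity bimodule obtained from the natural corner construction inside $A\otimes\cl K\cong B\otimes\cl K$. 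Alternatively, one can give a direct argument: starting from the $\sigma$-TRO $M$ with $A=\overline{[M^{*}BM]}^{\|\cdot\|}$ and $B=\overline{[MAM^{*}]}^{\|\cdot\|}$, equip $M$ with the $B$-valued inner product $\sca{m,n}_{B}=mn^{*}$ and the $A$-valued inner product $\sca{m,n}_{A}=m^{*}n$; the two $\sigma$-TRO relations furnish the approximate units needed to verify completeness and fullness on both sides, making $M$ an $A$-$B$ imprimitivity bimodule in the sense of Rieffel.

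For the strictness, I would invoke the counterexample from \cite{bgr}: there exist Rieffel Morita equivalent $C^{*}$-algebras that are not stably isomorphic, necessarily outside the $\sigma$-unital world. A concrete choice is $A=\bb C$ and $B=\cl K(H)$ for a non-separable Hilbert space $H$, Morita equivalent via $H$ itself; here $A\otimes\cl K$ is separable whereas $B\otimes\cl K$ is not, so they cannot be $*$-isomorphic, and Theorem \ref{225} rules out $\sigma$-strong $\Delta$-equivalence.

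The main obstacle is the justification that stable $*$-isomorphism implies Rieffel Morita equivalence in full generality (not only in the $\sigma$-unital case where the two notions coincide). I would prefer the intrinsic route via the $\sigma$-TRO $M$, which sidesteps this issue altogether: once one checks that $\overline{[MM^{*}]}^{\|\cdot\|}=B$ and $\overline{[M^{*}M]}^{\|\cdot\|}=A$ (which follows from the $\sigma$-TRO identities together with the defining equations, since a contractive approximate identity of $B$ inserted between $M$ and $M^{*}$ recovers $M^{*}M$ in norm), the standard Hilbert $C^{*}$-module axioms for $M$ are essentially built in.
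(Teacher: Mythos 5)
Your treatment of the strictness is exactly the paper's argument: invoke \cite{bgr} for a pair of strongly Morita equivalent $C^*$-algebras that are not stably isomorphic and conclude via Theorem \ref{225}. Your concrete witness $\bb C$ versus $\cl K(H)$ with $H$ non-separable is valid, since $\bb C\otimes \cl K$ is separable while $\cl K(H)\otimes \cl K$ is not. The paper's proof stops there and treats the forward implication (stable isomorphism implies Rieffel equivalence) as known; your first route for that implication --- pass to a $*$-isomorphism $A\otimes\cl K\cong B\otimes\cl K$ and use that $A\otimes e_{11}$ is a full hereditary subalgebra of $A\otimes\cl K$, hence Morita equivalent to it --- is correct and requires no $\sigma$-unitality. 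The worry you raise at the end is misplaced: it is the \emph{converse} direction (Morita equivalence implies stable isomorphism) that needs $\sigma$-unitality in \cite{bgr}; the direction you actually need holds in complete generality, so you should keep this route.

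The genuine gap is in the ``intrinsic route'' that you say you prefer. It is not true that the $\sigma$-TRO identities together with $A=\overline{[M^*BM]}^{\|\cdot\|}$ and $B=\overline{[MAM^*]}^{\|\cdot\|}$ force $\overline{[MM^*]}^{\|\cdot\|}=B$; indeed $MM^*$ need not even be contained in $B$, so $\sca{m,n}_B=mn^*$ is not a $B$-valued inner product on $M$. For a counterexample take $A=B=\cl K(H)$ and $M=\bb C I_H+\cl K(H)$: this is a norm-closed $\sigma$-TRO (take $m_1=n_1=I_H$ and all other terms zero), and since $\cl K(H)$ is an ideal of $M$ one checks $[M^*BM]=A$ and $[MAM^*]=B$, yet $\overline{[MM^*]}^{\|\cdot\|}=\bb C I_H+\cl K(H)\neq B$ and $I_H=I_HI_H^*\notin B$. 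The object that does work is $X=\overline{[BMA]}^{\|\cdot\|}$ (the space $Y$ appearing in the proof of Theorem \ref{223}): using $MAM^*\subseteq B$, $M^*BM\subseteq A$ and the contractive approximate identities one gets $XX^*\subseteq B$, $X^*X\subseteq A$, $\overline{[XX^*]}^{\|\cdot\|}=B$ and $\overline{[X^*X]}^{\|\cdot\|}=A$, so $X$ is a genuine imprimitivity bimodule. Either repair --- keeping the stable-isomorphism route, or replacing $M$ by $\overline{[BMA]}^{\|\cdot\|}$ --- completes the forward direction; as written, the step you flag as your preferred one would fail.
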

\begin{proof} It is well-known \cite{bgr} that there exist $C^*$-algebras that are 
strongly  Morita equivalent in the sense of Rieffel but are not stably isomorphic. Thus, 
by Theorem \ref{225} these  $C^*$-algebras cannot be $\sigma $-strongly $\Delta  $-equivalent.
\end{proof}

\begin{corollary}\label{227} Two $\sigma $-unital $C^*$-algebras are strongly  Morita equivalent in the sense of Rieffel 
if and only if they are $\sigma $-strongly  $\Delta  $-equivalent. 
\end{corollary}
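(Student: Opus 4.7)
The approach is to use Theorem \ref{225} as a bridge, combined with the Brown--Green--Rieffel theorem for $\sigma$-unital $C^*$-algebras. Theorem \ref{225} already supplies the equivalence $A\sim_{\sigma\Delta}B$ if and only if $A\otimes\cl K$ and $B\otimes\cl K$ are completely isometrically isomorphic as operator algebras. The Brown--Green--Rieffel theorem (from \cite{bgr}) asserts that two $\sigma$-unital $C^*$-algebras $A$ and $B$ are strongly Morita equivalent in Rieffel's sense if and only if $A\otimes\cl K$ and $B\otimes\cl K$ are $*$-isomorphic as $C^*$-algebras. Chaining these two equivalences yields the corollary.

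The only point requiring brief comment is that when $A$ and $B$ are $C^*$-algebras, the stabilizations $A\otimes\cl K$ and $B\otimes\cl K$ are themselves $C^*$-algebras, and any completely isometric algebra isomorphism between $C^*$-algebras is automatically a $*$-isomorphism. Consequently, ``stably isomorphic as operator algebras'' and ``stably $*$-isomorphic as $C^*$-algebras'' coincide in this setting, and the chain
\[
A\sim_{\sigma\Delta}B \;\Longleftrightarrow\; A\otimes\cl K \cong B\otimes\cl K \text{ as } C^*\text{-algebras} \;\Longleftrightarrow\; A \text{ and } B \text{ are Rieffel-Morita equivalent}
\]
goes through.

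I do not foresee a genuine obstacle: the corollary is essentially a $C^*$-algebraic repackaging of Theorem \ref{225} via \cite{bgr}. The preceding Corollary \ref{226} already exhibits examples where, without the $\sigma$-unitality assumption, Rieffel-Morita equivalence fails to imply $\sigma$-strong $\Delta$-equivalence; this confirms that the $\sigma$-unitality hypothesis is exactly what is needed to invoke the Brown--Green--Rieffel theorem and thereby close the loop.
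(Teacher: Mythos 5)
Your proposal is correct and follows essentially the same route as the paper: the paper's proof simply cites \cite{bgr} for the equivalence of Rieffel Morita equivalence and stable isomorphism for $\sigma$-unital $C^*$-algebras and then invokes Theorem \ref{225}. Your added remark that a completely isometric algebra isomorphism between $C^*$-algebras is automatically a $*$-isomorphism is a reasonable point of care that the paper leaves implicit.
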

\begin{proof} By \cite{bgr}, two  $\sigma $-unital $C^*$-algebras are strongly  Morita equivalent in the sense of Rieffel 
if and only if they are stably isomorphic. The conclusion is implied by Theorem \ref{225}.
\end{proof}

\section{Strong Morita embeddings}

In \cite{elest}, we defined a new relation $\subset _\Delta $ between dual operator algebras: Given two unital 
dual operator algebras $A$ and $B$, we say that $A\subset _\Delta B$ if there exists an orthogonal projection $p\in B$ 
such that $A$ and $pBp$ are weakly stably isomorphic. In this case, there exists a projection $q\in Z(\Delta (B))$ 
such that $pBp$ and $qBq$ are weakly stably isomorphic \cite[Lemma 2.11]{elest}. In the present section, we aim to investigate the strong version of the previously stated 
relation for operator algebras.

\begin{definition}\label{2100} Let $A$ and $B$ be operator algebras. We say that $A ,$  $\sigma \Delta $-embeds into $B,$ 
if there exists a projection $p\in Z(\Delta (B^{**}))$ such that $pBp$ is an operator algebra and $A\sim _{\sigma \Delta }pBp.$ 
In this case, we write $A\subset _{\sigma \Delta }B.$ 
\end{definition}

\begin{remark} Let $A$ be a $C^*$-algebra, and $p$ be a central projection of $A^{**}.$ Because the map 
$A\rightarrow A^{**}, \;a\rightarrow ap$ is a $*$-homomorphism, it has norm-closed range. Thus, $Ap$ is a $C^*$-algebra.
\end{remark}

In the following, we prove that $\subset _{\sigma \Delta }$ is transitive.

\begin{theorem}\label{2300} Let $A, B, C$ be operator algebras. If $A\subset _{\sigma \Delta }B$ 
and $B\subset _{\sigma \Delta }C$, then $A\subset _{\sigma \Delta }C.$
\end{theorem}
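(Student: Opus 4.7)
The plan is to transfer the central projection witnessing $A \subset _{\sigma \Delta } B$ across the $\sigma \Delta $-equivalence $B \sim _{\sigma \Delta } qCq$ to produce a central projection in $\Delta (C^{**})$ witnessing $A \subset _{\sigma \Delta } C$. Unpack the hypotheses: pick $p \in Z(\Delta (B^{**}))$ with $pBp$ an operator algebra and $A\sim _{\sigma \Delta } pBp$, and $q\in Z(\Delta (C^{**}))$ with $qCq$ an operator algebra and $B\sim _{\sigma \Delta } qCq$. By the transitivity part of Theorem \ref{224}, it is enough to find $r\in Z(\Delta (C^{**}))$ with $r\leq q$ such that $rCr$ is an operator algebra and $pBp\sim _{\sigma \Delta } rCr$; chaining then yields $A\sim _{\sigma \Delta } pBp\sim _{\sigma \Delta } rCr$, which is $A\subset _{\sigma \Delta } C$.

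Next, identify central projections of $\Delta ((qCq)^{**})$ with central projections of $\Delta (C^{**})$ lying below $q$. Using $(qCq)^{**}\cong qC^{**}q$ and the centrality of $q$ in $\Delta (C^{**})$, one has $\Delta ((qCq)^{**})=q\Delta (C^{**})q=q\Delta (C^{**})$, whose center is $qZ(\Delta (C^{**}))$. Hence every central projection $\tilde p\in Z(\Delta ((qCq)^{**}))$ equals a projection $r\in Z(\Delta (C^{**}))$ with $r\leq q$, and $\tilde p(qCq)\tilde p=rCr$ is automatically a corner of $C^{**}$ by a central diagonal projection. So the problem reduces to producing $\tilde p\in Z(\Delta ((qCq)^{**}))$ with $pBp\sim _{\sigma \Delta }\tilde p(qCq)\tilde p$.

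The heart of the argument is transferring $p$ across $B \sim _{\sigma \Delta } qCq$. By Theorem \ref{225}, there exists a completely isometric algebra isomorphism $\Phi :B\otimes \cl K\to (qCq)\otimes \cl K$. Taking bi-adjoints yields an isomorphism $\Phi ^{**}$ of dual operator algebras, which restricts to an isomorphism of the centers of the diagonals. At the level of second duals, tensoring with $\cl K$ contributes a $B(\ell ^2)$ tensor factor, so for $X\in \{B,qCq\}$ one has $Z(\Delta ((X\otimes \cl K)^{**}))\cong Z(\Delta (X^{**}))\otimes \bb C\,\id $. Under this identification, $\Phi ^{**}$ sends the central projection $p\otimes \id $ to one of the form $\tilde p\otimes \id $ for some $\tilde p\in Z(\Delta ((qCq)^{**}))$. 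Restricting $\Phi $ to the corresponding corners gives $pBp\otimes \cl K\cong \tilde p(qCq)\tilde p\otimes \cl K$, and a second application of Theorem \ref{225} closes the chain as $pBp\sim _{\sigma \Delta }\tilde p(qCq)\tilde p=rCr$.

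The main obstacle is the tensorial bookkeeping in step three: justifying rigorously the spatial identification $Z(\Delta ((X\otimes \cl K)^{**}))\cong Z(\Delta (X^{**}))\otimes \bb C\,\id $ for an operator algebra $X$ with contractive approximate identity, and the fact that this identification is preserved by $\Phi ^{**}$. A safer backup route, avoiding second-dual manipulations, is to work directly with the $\sigma $-TRO data: take a $\sigma $-TRO $M$ implementing $B\sim _{\sigma \Delta } qCq$, cut it on the left by (an image of) $p$ to produce a candidate $\sigma $-TRO between $pBp$ and the corresponding corner $rCr$ of $qCq$, and verify the $\sigma $-TRO relations using Lemma \ref{150}, mirroring the composition argument already exploited in the proof of Theorem \ref{224}.
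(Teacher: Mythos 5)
Your proposal is correct and follows essentially the same route as the paper: the paper also passes to $K_\infty(\cdot)$, extends the stable isomorphism implementing $B\sim_{\sigma\Delta}qCq$ to the second duals, transfers the central projection $p^\infty$ to a projection of the form $q_0^\infty$ with $q_0\in Z(\Delta(C^{**}))$, $q_0\leq q$, and identifies the resulting corner with $K_\infty(q_0Cq_0)$. The ``tensorial bookkeeping'' you flag (central projections of $\Delta((X\otimes\cl K)^{**})$ being of the form $r\otimes \id$) is asserted in the paper at the same level of detail, so your argument matches the intended one.
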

\begin{proof} Let $p\in Z(\Delta (B^{**})), q\in Z(\Delta (C^{**}))$ be such that $pBp, qCq$ are 
operator algebras and $ A\sim _{\sigma \Delta }pBp , B\sim _{\sigma \Delta }qCq.$ We write 
$$ \hat A=K_\infty (A),\; \hat B=K_\infty (B),\; \hat C=K_\infty (C).$$ Then,
$$ \hat A^{**}=M_\infty (A^{**}),\; \hat B^{**}=M_\infty (B^{**}),\; \hat C^{**}=M_\infty ^w(C^{**}). $$ 
There exist completely isometric homomorphisms
$$\theta : \hat A\rightarrow \hat B^{**}, \;\;\;\rho : \hat B\rightarrow \hat C^{**},$$ 
such that $$\theta(\hat A)= p^\infty \hat B p^\infty , \;\;\;\rho(\hat B)=  q^\infty \hat Cq^\infty  .$$
 There exists a completely isometric homomorphism $\rho _0: \hat B^{**}\rightarrow \hat C^{**}$ such that 
$$\rho _0|_{\hat B}=\rho , \;\;\rho _0(\hat B^{**})\subseteq q^\infty \hat C^{**}q^\infty  .$$
Because $p^\infty \in Z(\Delta (\hat B^{**}))$ and $\rho _0(p^\infty )\leq q^\infty $, 
there exists $q_0\in Z(\Delta (\hat C^{**}))$ such that $\rho _0(p^\infty )= q_0^\infty. $ Now,
$$\rho (\theta (\hat A))=\rho _0(\theta (\hat A))=\rho _0(p^\infty \hat Bp^\infty )=$$
$$\rho _0(p^\infty)\rho _0( \hat B)\rho _0(p^\infty )= q_0^\infty q^\infty \hat Cq^\infty q_0^\infty  =
q_0^\infty  \hat C q_0^\infty . $$
Thus, $$\rho \circ \theta (K_\infty (A))=K_\infty (q_0Cq_0).$$ Because $\rho \circ \theta $  is a completely isometric 
homomorphism, we have that $$A\sim _{\sigma \Delta }q_0Cq_0\Rightarrow A\subset _{\sigma \Delta }C.$$ 
\end{proof}

\begin{remark} Following this theorem, one should expect that $\subset _{\sigma \Delta }$  is a partial 
order relation in the class of operator algebras if we identify those 
operator algebras that are $\sigma $-strongly  $\Delta  $-equivalent. This means that the additional property holds that
$$A\subset_{\sigma \Delta } B, \;\;B\subset_{\sigma \Delta } A\Rightarrow A\sim _{\sigma \Delta }B.$$ 
However, this is not true, as we will prove in Section \ref{non}.   

\end{remark}

\subsection{The case of $C^*$-algebras}

In this subsection, we investigate the relation $\subset _{\sigma \Delta }$ in the case of $C^*$-algebras.

\begin{theorem}\label{21100} Let $A, B$ be $C^*$-algebras. The following are equivalent:

(i) $$A\subset _{\sigma \Delta }B$$.

(ii) There exists an onto $*$-homomorphism $\theta : K_\infty (B)\rightarrow K_\infty (A).$

(iii) There exists an ideal $I$ of $B$ such that $$A\sim _{\sigma \Delta }B/I.$$

(iv) For every $*$-isomorphism $\alpha : A\rightarrow \alpha (A)$, there exists a $*$-homomorphism (not necessarily 
faithful) $\beta: B \rightarrow \beta (B)$ such that $\alpha (A)\sim _{\sigma TRO}\beta (B).$
\end{theorem}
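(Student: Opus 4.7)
The plan is to prove the cyclic chain (i) $\Rightarrow$ (ii) $\Rightarrow$ (iii) $\Rightarrow$ (iv) $\Rightarrow$ (i). Throughout the argument I use the fact that for a $C^*$-algebra $B$ one has $\Delta(B^{**}) = B^{**}$, so the projection in Definition \ref{2100} is simply central in $B^{**}$.

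For (i) $\Rightarrow$ (ii), pick $p \in Z(B^{**})$ with $A \sim_{\sigma \Delta} pBp$. The map $B \to B^{**}$, $b \mapsto pb = pbp$, is a $*$-homomorphism with image the $C^*$-algebra $pBp$ (by the remark following Definition \ref{2100}), hence a surjective $*$-homomorphism $B \to pBp$. Tensoring with $\cl K$ and composing with the completely isometric isomorphism $K_\infty(pBp) \to K_\infty(A)$ furnished by Theorem \ref{225} (which between $C^*$-algebras is automatically a $*$-isomorphism by Banach--Stone for operator algebras) yields (ii). For (ii) $\Rightarrow$ (iii), the kernel of the onto $*$-homomorphism $\theta$ is a closed two-sided ideal of $B \otimes \cl K$; by the standard structure of closed ideals in a minimal tensor product with the nuclear $C^*$-algebra $\cl K$, this kernel has the form $I \otimes \cl K$ for a closed two-sided ideal $I$ of $B$. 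Then $K_\infty(B/I) \cong K_\infty(B)/K_\infty(I) \cong K_\infty(A)$, so Theorem \ref{225} delivers $A \sim_{\sigma \Delta} B/I$.

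For (iii) $\Rightarrow$ (iv), given $A \sim_{\sigma \Delta} B/I$ and a $*$-isomorphism $\alpha : A \to \alpha(A)$, Theorem \ref{223} produces a completely isometric (hence $*$-)isomorphism $\gamma : B/I \to \gamma(B/I)$ with $\alpha(A) \sim_{\sigma TRO} \gamma(B/I)$; the composition $\beta = \gamma \circ \pi$ with the quotient map $\pi : B \to B/I$ is the desired $*$-homomorphism. Finally, for (iv) $\Rightarrow$ (i), represent $A$ faithfully by some $\alpha$ and take the corresponding $\beta$. The $w^*$-continuous extension $\beta^{**} : B^{**} \to \beta(B)^{**}$ is a $*$-homomorphism of $W^*$-algebras, so its kernel is a $w^*$-closed two-sided ideal of $B^{**}$ and therefore of the form $(1-p)B^{**}$ for a unique central projection $p \in Z(B^{**})$. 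The induced $*$-isomorphism $pB^{**} \to \beta(B)^{**}$ restricts on $B$ to a $*$-isomorphism $pBp \to \beta(B)$, since both algebras are the image of $B$ under $*$-homomorphisms with the same kernel $\ker\beta = B \cap (1-p)B^{**}$. Transferring $\alpha(A) \sim_{\sigma TRO} \beta(B)$ through this $*$-isomorphism gives $A \sim_{\sigma \Delta} pBp$, i.e.\ $A \subset_{\sigma \Delta} B$.

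The main technical hurdle is (ii) $\Rightarrow$ (iii): one must invoke that every closed two-sided ideal of the minimal tensor product $B \otimes \cl K$ is of the form $I \otimes \cl K$ for an ideal $I$ of $B$, together with the natural identification $(B \otimes \cl K)/(I \otimes \cl K) \cong (B/I) \otimes \cl K$. Both are standard consequences of the nuclearity of $\cl K$ but deserve to be flagged explicitly. The remaining implications reduce to direct applications of Theorems \ref{223} and \ref{225} together with routine $w^*$-extension arguments and the identification of weak-$*$ closed ideals of $B^{**}$ with central projections.
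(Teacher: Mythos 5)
Your proof is correct and follows essentially the same route as the paper: the same use of Theorem \ref{225} to convert $\sigma\Delta$-equivalence into $*$-isomorphisms of $K_\infty(\cdot)$, the same identification of ideals of $K_\infty(B)$ with ideals of $B$, the same appeal to Theorem \ref{223} for (iii)$\Rightarrow$(iv), and the same second-dual/central-support-projection device (which the paper deploys in its (ii)$\Rightarrow$(i) step and you deploy directly in (iv)$\Rightarrow$(i)). The only differences are organizational --- you prove a single cycle where the paper proves three biconditionals, and you explicitly flag the ideal-structure fact for $B\otimes\cl K$ that the paper uses without comment --- neither of which changes the substance.
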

\begin{proof}

(i) $\Rightarrow (ii)$

By Definition \ref{2100} and Theorem \ref{225}, there exist a projection $p\in Z(B^{**})$ and a $*$-isomorphism $\rho : K_\infty (pB)\rightarrow 
K_\infty (A).$ Define the onto $*$-homomorphism $\tau : K_\infty (B)\rightarrow K_\infty (pB)$ 
given by $\tau (  (b_{i,j})_{i,j}  )=(pb_{i,j})_{i,j}.$ We denote $\theta =\rho \circ \tau .$ 

(ii) $\Rightarrow (i)$

Suppose that $\theta ^{**}: M_\infty (B^{**})  \rightarrow M_\infty (A^{**}) $ is the second dual of $\theta,$ then there 
exists a projection $q\in Z(B^{**})$ such that $$\theta^{**}(xq^\infty )= \theta^{**} (x), \;\;\forall 
\;x\in M_\infty(B^{**}),$$ and $\theta |_{M_\infty (B^{**}q)}$ is a $*$-homomorphism.
Thus, if $x\in K_\infty (B)$, we have that $\theta (xq^\infty )=\theta (x).$ Therefore, $$K_\infty (Bq)\cong K_\infty (A),$$ 
which implies that 
 $$A\sim _{\sigma \Delta }Bq\Rightarrow A\subset _{\sigma \Delta }B.$$

(iii) $\Rightarrow (ii)$

If $A\sim _{\sigma \Delta }B/I,$ then 
$$K_\infty (A)\cong K_\infty (B/I)\cong K_\infty (B)/K_\infty (I).$$ 
Because $K_\infty (I) $ is an ideal of $K_\infty (B),$ there exists an onto $*$-homomorphism $\theta : K_\infty (B)\rightarrow 
K_\infty (A).$

(ii) $\Rightarrow (iii)$

Suppose that  $\theta : K_\infty (B)\rightarrow K_\infty (A)$ is an onto $*$-homomorphism. Then, there exists an ideal $J\subseteq K_\infty 
(B)$ such that $$K_\infty (B)/J\cong K_\infty (A).$$ 
The ideal $J$ is of the form $K_\infty (I)$ for an ideal $I $ of $B.$ Thus, 
$$K_\infty (B/I)\cong K_ \infty (B)/K_\infty (I)\cong K_\infty (B)/J\cong K_\infty (A).$$
Therefore, $A\sim _{\sigma \Delta }B/I.$

(iv) $\Rightarrow (iii)$

Suppose that $\alpha : A\rightarrow \alpha (A), \beta : B \rightarrow \beta (B)$ are $*$-homomorphisms such that 
$Ker \alpha =\{0\}$ and   $\alpha (A)\sim _{\sigma TRO}\beta (B).$ Let $I$ be the  ideal $Ker \beta .$ 
Then, $\beta (B)\cong B/I,$ and thus $A\sim _{\sigma \Delta }B/I.$

(iii) $\Rightarrow (iv)$

We assume that $\alpha: A\rightarrow  \alpha (A)$ is a faithful $*$-homomorphism, and that $A\sim _{\sigma \Delta }B/I.$
 By Theorem \ref{223}, there exists a faithful $*$-homomorphism $\gamma : B/I\rightarrow \gamma (B/I)$ such that 
$$\alpha (A)\sim _{\sigma TRO}\gamma (B/I).$$ 
If $\pi : B\rightarrow B/I$ is the natural mapping and $\beta =\gamma \circ \pi $, 
then $$\alpha (A)\sim _{\sigma TRO}\beta (B).$$
\end{proof}

\begin{remark}\label{000000}\em {If   $A$ and $B$ are $W^*$-algebras and $\alpha : A\rightarrow B, \;\;\;\beta: B \rightarrow  A$ are 
$w^*$-continuous onto $*$-homomorphisms, then $A$ and $B$ are $*$-isomorphic. Indeed, there exist projections $e_1\in Z(A), f_1\in Z(B)$ 
such that $$Ae_1\cong B, \;\;\;Bf_1\cong A.$$ Thus, there exists a projection $e_2\in Z(A), e_2\leq e_1$ such that $$Ae_2\cong Bf_1\Rightarrow 
Ae_2\cong A.$$ From the proof of Lemma 2.17 in \cite{elest}, we have that $A\cong Ae_1,$ and thus $A\cong B.$ 
In Example \ref{2000000}, we will present non-isomorphic $C^*$-algebras  $A$ and $B$ for which there 
exist onto $*$-homomorphisms $\alpha : A\rightarrow B, \;\;\;\beta: B \rightarrow  A.$ These algebras 
are not $W^*$-algebras. }\end{remark}

\begin{remark}\em{ As we have previously mentioned, in \cite{elest} we defined an analogous 
 relation $\subset _\Delta $ between unital dual operator algebras. 
We have proven that if $A\subset _\Delta B,$ where $A, B$ are unital dual operator algebras, then there exists a central projection $p$ 
in $\Delta (B)$ 
and a Hilbert space $H$ such that $A\bar \otimes B(H)$ and $(pBp)\bar \otimes B(H)$ are isomorphic as dual 
operator algebras. Here, $\bar \otimes $ is the normal spatial tensor product.  
In the case of $W^*$-algebras, we have proven that $A\subset _\Delta B$ if and only if there exists a a Hilbert space $H$ and a 
$w^*$-continuous $*$-homomorphism from 
 $B\bar \otimes B(H)$  onto $ A\bar \otimes B(H)).$ We have also proven that if  $A$ and $B$ are $W^*$-algebras such that 
$A\subset _\Delta B$ and $B\subset _\Delta A$, then $A$ and $B$ are stably isomorphic in the weak sense. We present a new proof of this fact here.

Suppose that  $A\subset _\Delta B$ and $B\subset _\Delta A.$ Then, there exist Hilbert spaces $H$ and $K$ and $w^*$-continuous $*$-homomorphisms 
from $B\bar \otimes B(H) $ onto $A\bar \otimes B(H)$ and  from $A\bar \otimes B(K) $ onto $B\bar \otimes B(K).$ 
We conclude that there exist  $w^*$-continuous $*$-homomorphisms 
from $B\bar \otimes B(H)\bar \otimes B(K) $ onto $A\bar \otimes B(H)\bar \otimes B(K) $ and  
from $A\bar \otimes B(K)\bar \otimes B(H)  $ onto $A\bar \otimes B(K)\bar \otimes B(H) .$ Therefore, 
by Remark \ref{000000}, 
 $$A\bar \otimes B(K)\bar \otimes B(H) \cong B\bar \otimes B(H)\bar \otimes B(K). $$ Because 
$$B(H)\bar \otimes B(K)) \cong B(K)\bar \otimes B(H) \cong B(K\otimes  H),$$ 
we have that $$A\bar \otimes B(K\otimes  H)\cong B\bar \otimes B(K\otimes H).$$ 
Thus, $A$ and $B$ are stably isomorphic.
}
\end{remark}

\begin{remark}\em{ The relation $\subset_ \Delta $ between $W^*$-algebras is a partial order relation up to 
weak stable isomorphism \cite{elest}. This means that it has the following properties:

(i) $A\subset _\Delta A$.

(ii) $A\subset _\Delta B,\;\;\;B\subset _\Delta C\Rightarrow A\subset _\Delta C.$

(iii)If $A\subset _\Delta B$ and $B\subset _\Delta A$, then $A$ and $B$ are weakly stably isomorphic. 
Therefore, it is natural to ask whether $\subset _{\sigma \Delta }$ is a partial order relation up to 
strong  stable isomorphism for $C^*$-algebras. Although $\subset _{\sigma \Delta }$ satisfies the properties (i) and (ii), it does not 
satisfy property (iii), as we show in Example \ref{2000000}. Nevertheless, $\subset _{\sigma \Delta }$ 
satisfies the property described in Theorem \ref{vary}.}
\end{remark}

\begin{example}\label{1000000}\em{Let $X, Y$ be compact metric spaces, $\theta : X\rightarrow Y$ be a continuous one-to-one function, and 
$C(X)$ and $C(Y)$ be the algebras of continuous functions from  $X$ and $Y$, respectively, into the complex plane $\bb C$, equipped 
with the supremum norm. Then, the map $$\rho : C(Y)\rightarrow C(X), \;\;\rho (f)=f\circ \theta $$ is an onto $*$-homomorphism, 
and thus $C(X)\subset _{\sigma \Delta }C(Y).$ Indeed, if $g\in C(X)$ we define 
$$f_0:\theta (X)\rightarrow \bb C,\;\;f_0(\theta (x))=g(x).$$ Because $\theta : X\rightarrow \theta (X)$ is a homeomorphism, $f_0$ 
is continuous. By Tietze's theorem, there exists $f\in C(Y)$ such that $f|_{\theta (X)}=f_0.$ We 
have that $f\circ \theta (x)=g(x)$ for all $x\in X,$ and thus $\rho (f)=g.$   
}

\end{example}

\begin{example}\label{2000000}\em{There exist commutative $C^*$-algebras $A$ and $B$ such that $A\subset_{\sigma \Delta }B, \;\;\;
B \subset _{\sigma \Delta }A,$ but $A$ and $B$ are not strongly Morita equivalent. Thus, $A$ and $B$ are not $\sigma \Delta $-
equivalent. We denote the following subsets of $\bb C:$

$$  X=\{z\in \bb C: 1\leq |z|\leq 5\} , \;\;\;Y=\{z\in \bb C: |z|\leq 5\} .$$ 
We write $A=C(X),\;\;B=C(Y).$ Because $X\subseteq Y$, by Example \ref{1000000} we have that $A\subset_{\sigma \Delta }B.$
All of the closed discs of $\bb C$ are homeomorphic, and thus there exists a homeomorphism $\theta : Y\rightarrow X_0,$ 
where  $X_0=\{z\in \bb C: |z-3|\leq 1\}.$ Because $X_0\subseteq X,$ Example \ref{1000000} implies that $B\subset_{\sigma \Delta }A.$
 If $A$ and $B $ were strongly Morita equivalent, then they would also be $*$-isomorphic. The Stone-Banach theorem implies that $X$ and 
$Y$ would then be homeomorphic. However, this contradicts the fact that $Y$ is a simply connected set and $X$ is not. }

\end{example}

Next, we will prove  Theorem \ref{vary}, which states the following:
$$A\subset _{\sigma \Delta }B, \;\;\;B\subset _{\sigma \Delta }A\Rightarrow Ar \sim_{\sigma \Delta }B\hat r
, \;\;\;\; A(id_{A^{**}}-r) \sim _{\sigma \Delta }
 B(id_{B^{**}}-\hat r), $$ for central projections $r\in A^{**}, \hat r\in B^{**}.$ 

\begin{lemma}\label{10000} Let $A, B$ be operator algebras and $\hat A, \hat B$ be unital dual operator algebras 
such that $ \hat A=\overline{A}^{w^*} , \hat B=\overline{B}^{w^*}. $ Furthermore, let  $M$ be a TRO such that 
$$ A=\overline{[M^*BM]}^{\|\cdot\|},\;\;\; B=\overline{[MAM^*]}^{\|\cdot\|}, $$ and let $\alpha : \hat A
\rightarrow \alpha (\hat A)$ be  a $w^*$-continuous completely isometric homomorphism such that $H=\overline{
\alpha (A)(H)}.$ Then, there exist a Hilbert space $K,$ a $w^*$-continuous completely isometric honomorphism $\beta : 
\hat B\rightarrow B(K)$ such that $K=\overline{\beta (B)(K)}$, and a TRO homomorphism $\mu : M\rightarrow B(H,K)$ 
such that the following hold:
 
A) If $a\in A, b\in B, m,n\in M$ such that $a=m^*bn$, then $\alpha (a)=\mu (m)^* \beta (b)\mu (n).$

B) If $a\in A, b\in B, m,n\in M$ such that $b=man^*$, then $\beta (b)=\mu (m)\alpha (a)\mu (n)^*.$
Therefore,

$$ \alpha (\hat A)=\overline{[\mu (M)^*\beta (\hat B)\mu (M)]}^{w^*},\;\;\; \beta (\hat B)
=\overline{[\mu (M)\alpha (\hat A)\mu (M)^*]}^{w^*} $$ and 
 $$ \alpha (A)=\overline{[\mu (M)^*\beta (B)\mu (M)]}^{\|\cdot\|},\;\;\; \beta (B)
=\overline{[\mu (M)\alpha (A))\mu (M)^*]}^{\|\cdot\|} .$$ 
\end{lemma}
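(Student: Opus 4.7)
The plan is to mimic the Haagerup-tensor-product construction used in the proof of Theorem~\ref{223} (and in Theorem~2.12 of \cite{elehoust}) to produce $K$, $\mu$, and a candidate $\beta_0$ on the norm level, and then to dualize by exploiting non-degeneracy and the uniqueness of $w^*$-continuous extensions.

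First, I set $Y = \overline{[BMA]}^{\|\cdot\|}$ and form the $A$-balanced Haagerup tensor product $K = Y \otimes^h_A H$, which is a Hilbert space by \cite{bmp}. I define
$$\beta_0 : B \to B(K), \quad \beta_0(b)(y \otimes h) = (by) \otimes h,$$
$$\mu : M \to B(H,K), \quad \mu(m)(\alpha(a) h) = (ma) \otimes h.$$
Lemma~2.10 and Theorem~2.12 of \cite{elehoust} give at once that $\beta_0$ is a completely isometric homomorphism, $\mu$ is a TRO homomorphism, and
$$\alpha(A) = \overline{[\mu(M)^*\beta_0(B)\mu(M)]}^{\|\cdot\|}, \quad \beta_0(B) = \overline{[\mu(M)\alpha(A)\mu(M)^*]}^{\|\cdot\|}.$$
Properties (A) and (B) are then immediate from the defining formulas of $\beta_0$ and $\mu$.

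Second, I check non-degeneracy on $K$. The hypothesis $H = \overline{\alpha(A)(H)}$, together with $B = \overline{[MAM^*]}^{\|\cdot\|}$ and the contractive approximate identity of $A$, shows that $\mu(M)(H)$ is norm-dense in $K$; combined with (B), this gives $K = \overline{\beta_0(B)(K)}$.

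Third, and this is the crux, I extend $\beta_0$ $w^*$-continuously to $\hat B$. Since $\beta_0$ is a non-degenerate completely contractive homomorphism of the operator algebra $B$, it admits a unique $w^*$-continuous completely contractive extension $\tilde\beta : B^{**} \to B(K)$ by the universal property of the second dual operator algebra (see \cite{bm}). Using the identities (A), (B) together with the assumed $w^*$-continuity of $\alpha$ on $\hat A$, one verifies that $\tilde\beta$ annihilates the kernel of the canonical $w^*$-continuous quotient $B^{**} \twoheadrightarrow \hat B$: indeed any net in $B$ converging $w^*$ to $0$ in $\hat B$ is carried by $\tilde\beta$ through conjugation by $\mu(M)$ into $\alpha(A)$, and $w^*$-convergence is preserved by the non-degenerate spatial action. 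Thus $\tilde\beta$ descends to a $w^*$-continuous completely contractive $\beta : \hat B \to B(K)$ extending $\beta_0$, and a Krein-Smulian argument, combined with the complete isometry of $\beta_0$ on the $w^*$-dense subalgebra $B$, upgrades $\beta$ to completely isometric on all of $\hat B$.

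Finally, the dual identities $\alpha(\hat A) = \overline{[\mu(M)^*\beta(\hat B)\mu(M)]}^{w^*}$ and $\beta(\hat B) = \overline{[\mu(M)\alpha(\hat A)\mu(M)^*]}^{w^*}$ follow by taking $w^*$-closures in the norm identities of step one, using separate $w^*$-continuity of left and right multiplication by $\mu(m)$ and $\mu(m)^*$, together with $\hat A = \overline{A}^{w^*}$ and $\hat B = \overline{B}^{w^*}$. The main obstacle I anticipate is step three: ensuring that the second-dual extension $\tilde\beta$ really factors through the quotient defining $\hat B$ and remains completely isometric there; the non-degeneracy established in step two and the spatial, $w^*$-continuous realization of $\hat A$ afforded by $\alpha$ are precisely what make this work.
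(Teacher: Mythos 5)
Your construction of $K=Y\otimes^h_AH$, $\mu$, and $\beta_0$, and the verification of (A), (B) and the norm-level identities, is exactly the route the paper intends: its ``proof'' is a one-line deferral to the proof of Theorem 2.12 of \cite{elehoust}, which is the same Haagerup-tensor-product argument you quote (and which also appears in Theorem \ref{223}). The content of the lemma, and of the ``simple modifications'' the paper alludes to, is entirely in your third step, and that is where there is a genuine gap. Your claim that ``a Krein--Smulian argument, combined with the complete isometry of $\beta_0$ on the $w^*$-dense subalgebra $B$, upgrades $\beta$ to completely isometric on all of $\hat B$'' is not a valid argument: for a $w^*$-continuous map, $w^*$-lower semicontinuity of the norm gives $\|\beta(\hat b)\|\leq\liminf_i\|\beta_0(b_i)\|$ for a bounded net $b_i\to\hat b$, which is the wrong direction, and in general a $w^*$-continuous completely contractive homomorphism that is isometric on a $w^*$-dense subalgebra need not be isometric on the closure (one cannot even invoke Kaplansky density here, since $B$ is non-self-adjoint). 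Krein--Smulian tells you the image of the ball is $w^*$-closed; it says nothing about the norm being preserved.

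The complete isometry of $\beta$ on $\hat B$ is true, but it must be extracted from the TRO structure rather than from density. First extend (A) to $\hat B$: for fixed $m,n\in M$ both $\hat b\mapsto\alpha(m^*\hat bn)$ and $\hat b\mapsto\mu(m)^*\beta(\hat b)\mu(n)$ are $w^*$-continuous and agree on $B$, hence on $\hat B$ (this also makes your kernel-annihilation argument precise, since it shows $\mu(m)^*\tilde\beta(z)\mu(n)=0$ for $z\in\ker(B^{**}\to\hat B)$ and one concludes by totality of $\mu(M)H$ in $K$). Then take a contractive approximate unit of $\overline{[MM^*]}^{\|\cdot\|}$ consisting of finite sums $\sum_{i\in F}m_im_i^*$; its SOT-limit $e$ satisfies $e\hat be=\hat b$ for every $\hat b\in\hat B=\overline{[M\hat AM^*]}^{w^*}$, so $\|\hat b\|\leq\liminf_F\|\sum_{i,j\in F}m_i(m_i^*\hat bm_j)m_j^*\|\leq\sup_F\|[\alpha(m_i^*\hat bm_j)]_{i,j\in F}\|=\sup_F\|[\mu(m_i)^*\beta(\hat b)\mu(m_j)]_{i,j}\|\leq\|\beta(\hat b)\|$, and the same computation at each matrix level gives complete isometry. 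With that repair (and with the routine Krein--Smulian/bounded-net argument that the descended map $\beta$ on $\hat B\cong B^{**}/\ker\pi$ is indeed $w^*$-continuous), your proof is a faithful reconstruction of the argument the paper outsources to \cite{elehoust}.
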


The proof of this lemma can be inferred from the proof of Theorem 2.12 in \cite{elehoust}, with the addition of some simple 
modifications.

\begin{definition} \label{20000} Let $\hat A, \hat B$ be von Neumann algebras, and $A$ (resp. $B$) be a $C^*$-subalgebra of 
$\hat A$ (resp. $\hat B$) such that $ \hat A=\overline{A}^{w^*}$ (resp. $\hat B=\overline{B}^{w^*},  $ ).
We write $(A, \hat A)\sim _\Delta (B, \hat B)$ if there exist $w^*$-continuous and injective $*$-homomorpisms 
$\alpha: \hat A \rightarrow \alpha (\hat A), \;\;\beta: \hat B\rightarrow 
 \beta (\hat B)$ and a $\sigma $-TRO $M$ such that \begin{equation} \label{refer} \alpha (A)=\overline{[M^*\beta (B)M]}^{\|\cdot\|} , 
\;\; \beta (B)=\overline{[M\alpha (A)M^*] }^{\|\cdot\|}. \end{equation} 
\end{definition}

\begin{remarks} \label{easy}\em{(i) If (\ref{refer}) holds then $$\alpha (\hat A)=\overline{[M^*\beta (\hat B)M]}^{w^*} , 
\;\; \beta (\hat B)=\overline{[M\alpha (\hat A)M^*] }^{w^*}. $$

(ii) Lemma \ref{10000} implies that if $(A,\hat A)\sim _\Delta (B,\hat B)$ 
and $\gamma : \hat A\rightarrow \gamma (\hat A)$ is a $w^*$-continuous $*$-isomorphism, then there exists a 
$w^*$-continuous $*$-isomorphism $\delta : \hat B\rightarrow \delta (\hat B)$ and a $\sigma $-TRO $N$ such that 
$$\gamma (A)=\overline{[N^*\delta (B)N]}^{\|\cdot\|}, \;\;\;\delta (B)=\overline{[N\gamma (A)N^*]}^{\|\cdot\|}.$$

(iii) The above remark and Theorem \ref{224} both imply that if $\hat A, \hat B, \hat C$ are 
 von Neumann algebras $A,B,C$ are, respectively, $w^*$-dense  $C^*$-subalgebras of these, and  
$(A,\hat A)\sim _\Delta (B,\hat B)$ and 
$(B,\hat B)\sim _\Delta (C,\hat C), $ 
then $(A,\hat A)\sim _\Delta (C,\hat C).$  }
\end{remarks}

In the following, we assume that $A$ is a $C^*$-algebra such that $A\subseteq A^{**}\subseteq B(H)$ for 
some Hilbert space $H$, and $e_2$ is a central projection of $A^{**}.$ We also assume that $A\sim _{\sigma \Delta 
}Ae_2.$

\begin{lemma}\label{10000a} There exist a $w^*$-continuous $*$-isomorphism $\theta _1: A^{**}\rightarrow \theta _1(A^{**})$
 and a $\sigma $-TRO $M$ such that 
 $$\theta _1(A)=\overline{[M^*Ae_2M]}^{\|\cdot\|} , \;\; Ae_2=\overline{[M\theta _1(A)M^*] }^{\|\cdot\|}. $$ 
\end{lemma}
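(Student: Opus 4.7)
The plan is to apply Theorem \ref{223} to the canonical inclusion $Ae_2\hookrightarrow B(H)$ to obtain a completely isometric homomorphism of $A$ together with the required $\sigma$-TRO, and then extend that homomorphism to a $w^*$-continuous $*$-isomorphism on $A^{**}$ via the universal property of the $C^*$-algebra bidual.

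By the symmetry of $\sim_{\sigma\Delta}$ (Theorem \ref{224}), the hypothesis $A\sim_{\sigma\Delta}Ae_2$ gives $Ae_2\sim_{\sigma\Delta}A$. Since $Ae_2\subseteq A^{**}\subseteq B(H)$, the inclusion $Ae_2\hookrightarrow B(H)$ is a completely isometric homomorphism, and Theorem \ref{223} therefore produces a completely isometric homomorphism $\theta:A\to\theta(A)\subseteq B(K)$ and a $\sigma$-TRO $N$ with
$$Ae_2=\overline{[N^*\theta(A)N]}^{\|\cdot\|},\qquad \theta(A)=\overline{[NAe_2N^*]}^{\|\cdot\|}.$$
The adjoint of a $\sigma$-TRO is again a $\sigma$-TRO (the two approximating sequences in Definition \ref{110} simply swap roles under adjunction), so setting $M:=N^*$ yields a $\sigma$-TRO satisfying the orientation required by the statement:
$$\theta(A)=\overline{[M^*Ae_2M]}^{\|\cdot\|},\qquad Ae_2=\overline{[M\theta(A)M^*]}^{\|\cdot\|}.$$

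Since $A$ and $\theta(A)$ are $C^*$-algebras, the completely isometric algebra homomorphism $\theta$ is automatically a $*$-isomorphism onto $\theta(A)$. Its bitranspose $\theta_1:=\theta^{**}:A^{**}\to\theta(A)^{**}$ is a $w^*$-continuous Banach space isomorphism; because $A$ is $w^*$-dense in $A^{**}$ and $\theta_1$ extends the $*$-homomorphism $\theta$, separate $w^*$-continuity forces $\theta_1$ to be multiplicative and $*$-preserving on all of $A^{**}$. Hence $\theta_1$ is a $w^*$-continuous $*$-isomorphism from $A^{**}$ onto $\theta_1(A^{**})=\theta(A)^{**}$. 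Finally, because $\theta_1|_A=\theta$, the identities of the previous paragraph immediately give
$$\theta_1(A)=\overline{[M^*Ae_2M]}^{\|\cdot\|},\qquad Ae_2=\overline{[M\theta_1(A)M^*]}^{\|\cdot\|},$$
which is the conclusion. The only real subtlety is that Theorem \ref{223} produces the TRO relation in the opposite orientation from the one demanded here, and this is corrected by the trivial passage to $M^*$; the extension to the bidual is then routine.
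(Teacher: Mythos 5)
There is a genuine gap at the extension step, and it is precisely the point where the paper has to work hardest. Your first paragraph is fine: symmetry of $\sim_{\sigma\Delta}$ plus Theorem \ref{223} applied to the concrete inclusion $Ae_2\subseteq B(e_2H)$ does produce a completely isometric $*$-homomorphism $\theta:A\to B(K)$ and a $\sigma$-TRO $M$ with $\theta(A)=\overline{[M^*Ae_2M]}^{\|\cdot\|}$ and $Ae_2=\overline{[M\theta(A)M^*]}^{\|\cdot\|}$. The problem is the passage to $A^{**}$, which you call routine. You take $\theta_1=\theta^{**}:A^{**}\to\theta(A)^{**}$; this is indeed an injective $w^*$-continuous $*$-isomorphism, but its codomain is the \emph{abstract} bidual of $\theta(A)$, not the von Neumann algebra $\overline{\theta(A)}^{w^*}\subseteq B(K)$ generated by $\theta(A)$ on the Hilbert space where the TRO $M$ lives. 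After this identification the displayed identities $\theta_1(A)=\overline{[M^*Ae_2M]}^{\|\cdot\|}$ no longer typecheck: $M^*Ae_2M$ consists of operators on $K$, while $\theta_1(A)$ sits inside $\theta(A)^{**}$, and $\theta_1(A^{**})=\theta(A)^{**}$ does not act on $K$ at all. This matters for the sequel (Lemmas \ref{30000}--\ref{50000}), where $\theta_1(a)$ must be an operator on a concrete Hilbert space compatible with $M$ so that the infinite direct sums and the matrices $\zeta(m)$ can be formed.

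If instead you take the only $w^*$-continuous extension of $\theta$ that does land in $B(K)$, namely the canonical normal extension $\tilde\theta:A^{**}\to\overline{\theta(A)}^{w^*}$, then concreteness is restored but injectivity is lost: the normal extension of a faithful representation of a $C^*$-algebra to its bidual is faithful only when the representation has full central support (compare $C[0,1]$ acting by multiplication on $L^2[0,1]$, whose normal extension maps the huge bidual onto $L^\infty[0,1]$). Nothing in Theorem \ref{223} guarantees this for the representation of $A$ on $K=Y\otimes^h_{Ae_2}e_2H$; establishing it is exactly the content of Lemma \ref{10000} and of the relation $(A,A^{**})\sim_\Delta(Ae_2,A^{**}e_2)$ that the paper's proof sets up. The paper gets there by a detour you skip entirely: it shows $(B,B^{**})\sim_\Delta(B\otimes\cl K,\,B^{**}\bar\otimes B(l^2(\bb N)))$ for any $C^*$-algebra $B$ via the $\sigma$-TRO $I_H\otimes p\cl K$, uses the fact that $A\sim_{\sigma\Delta}Ae_2$ yields a genuine $*$-isomorphism $A\otimes\cl K\cong (Ae_2)\otimes\cl K$ (Theorem \ref{225}) whose bidual \emph{is} automatically a normal faithful $*$-isomorphism, chains these with Remark \ref{easy}(iii), and only then applies Remark \ref{easy}(ii) with $\gamma=id_{A^{**}e_2}$ to produce $\theta_1$ and $M$. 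To repair your argument you would need to either follow that route or prove directly (via Lemma \ref{10000}) that the induced representation furnished by Theorem \ref{223} has faithful normal extension; as written, the claim that the bidual step is routine is where the proof fails.
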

\begin{proof} Let $B$ be a $C^*$ algebra. We assume that $B\subseteq B^{**}\subseteq B(H).$ Let $\cl K$ be the algebra of compact operators 
acting on $l^2(\bb N)$, and $p\in \cl K$ be a rank one projection. We define the $\sigma $-TRO $M=I_H\otimes p\cl K.$ Then, 
we have that $$   B\otimes p=\overline{[M(B\otimes \cl K)M^*]}^{\|\cdot\|}, \;\;\; B\otimes \cl K=\overline{[M^*(B\otimes p)M]}^{\|\cdot\|}, $$
where $\otimes $ is the minimal tensor product. Because $ \overline{B\otimes p}^{w^*}=B^{**}\bar \otimes p , \;\;\;
\overline{B\otimes \cl K}^{w^*}=B^{**}\bar \otimes B(l^2(\bb N))$, here $\bar \otimes $ is the spatial tensor product, we 
have $$(B\otimes p, B^{**}\bar \otimes p)\sim _\Delta (B\otimes \cl K, B^{**}\bar \otimes B(l^2(\bb N))).$$

Because there exists a $*$-isomorphism from $B^{**}$ onto $B^{**}\bar \otimes p$ mapping 
$B$ onto $B \otimes p$, we can conclude that  $(B, B^{**})\sim _\Delta (B\otimes \cl K, B^{**}\bar \otimes B(l^2(\bb N))).$

Therefore, $$(A, A^{**})\sim _\Delta ( A\otimes \cl K , A^{**}\bar \otimes B(l^2(\bb N))   )$$  and  $$(Ae_2, A^{**}e_2) \sim _\Delta 
 ((Ae_2)\otimes \cl K, (A^{**}e_2)\bar \otimes B(l^2(\bb N))).$$

Because $A\sim _{\sigma \Delta }Ae_2$, there exists a $*$-isomorphism from $A^{**}\bar \otimes B(l^2(\bb N))$ onto 
$(A^{**}e_2)\bar \otimes B(l^2(\bb N))$ mapping  $A\otimes \cl K$ onto  $(Ae_2)\otimes \cl K $ 
and, therefore, $$( A\otimes \cl K , A^{**}\bar \otimes B(l^2(\bb N))   )\sim _\Delta  ((Ae_2)\otimes \cl K, (A^{**}e_2)\bar \otimes B(l^2(\bb N))).$$
 Now Remark \ref{easy}, (iii), implies that $(A, A^{**})\sim _\Delta (Ae_2, A^{**}e_2).$ By Remark \ref{easy}, (ii), 
for the identity map $id: A^{**}e_2 \rightarrow A^{**}e_2 $ there exist a $w^*$-continuous $*$-isomorphism $\theta _1: A^{**}\rightarrow \theta _1(A^{**})$
 and a $\sigma $-TRO $M$ such that 
 $$\theta _1(A)=\overline{[M^*Ae_2M]}^{\|\cdot\|} , \;\; Ae_2=\overline{[M\theta _1(A)M^*] }^{\|\cdot\|}. $$ 
\end{proof}

\begin{lemma}\label{30000}Let $M, \theta _1$ be as in Lemma \ref{10000a}. Then, there exist $w^*$-continuous $*$-
isomorphisms $\rho _k: A^{**}\rightarrow \rho _k(A^{**})$ and TRO homomorphims $\phi _k: M\rightarrow \phi _k(M), 
k=0,1,2,...$ where $\rho _0=id_{A^{**}}, \phi _0=id_M,$ such that if $a\in A^{**}, x\in A^{**}e_2, m,n \in M,$ the equality 
 $\rho _k(a)=\phi _{k-1}(m)^*\rho _{k-1}(x) \phi _{k-1}(n)$  implies that 
$\rho _{k+1}(a)=\phi _{k}(m)^*\rho _{k}(x) \phi _{k}(n)$ and the equality $\rho _{k-1}(x)=\phi _{k-1}(m)\rho _{k}(a) \phi _{k-1}(n)^*$  implies that
 $\rho _{k}(x)=\phi _{k}(m)\rho _{k+1}(a) \phi _{k}(n)^*$ for all $k=1,2,...$   Therefore, 
$$\rho _k( A^{**})=\overline{[  \phi _{k-1}(M)^*\rho _{k-1}( A^{**}e_2)\phi _{k-1}(M)   ]}^{w^*} ,$$$$ 
 \rho _{k-1}(A^{**}e_2)=\overline{[   \phi _{k-1}(M)\rho _k(A^{**})\phi _{k-1}(M)^*    ] }^{w^*} $$ 
and $$\rho _k(A)=\overline{[\phi _{k-1}(M)^*\rho _{k-1}( Ae_2)\phi _{k-1}(M)   ]}^{\|\cdot\|} ,$$$$ 
 \rho _{k-1}(Ae_2)=\overline{[  \phi _{k-1}(M)\rho _k(A)\phi _{k-1}(M)^*   ] }^{\|\cdot\|} $$
for all $k=1,2,...$ 
\end{lemma}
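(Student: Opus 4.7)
The plan is to establish the lemma by induction on $k$, using Lemma \ref{10000a} for the base step and Lemma \ref{10000} to propagate the construction from level $k$ to level $k+1$. We set $\rho_0 := \mathrm{id}_{A^{**}}$ and $\phi_0 := \mathrm{id}_M$, and define $\rho_1 := \theta_1$, where $\theta_1$ and $M$ are furnished by Lemma \ref{10000a}. The level-$1$ norm identity
$$\rho_1(A) = \overline{[\phi_0(M)^* \rho_0(Ae_2) \phi_0(M)]}^{\|\cdot\|} = \overline{[M^* Ae_2 M]}^{\|\cdot\|}$$
and its dual are exactly the content of Lemma \ref{10000a}; the corresponding $w^*$-closed identities follow by taking $w^*$-closures and using the $w^*$-continuity of $\rho_1$.

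For the inductive step, suppose the required identities hold through index $k$. We apply Lemma \ref{10000} in the following configuration: in the notation of that lemma, take $A := \rho_{k-1}(Ae_2)$, $\hat A := \rho_{k-1}(A^{**}e_2)$, $B := \rho_k(A)$, $\hat B := \rho_k(A^{**})$, $M := \phi_{k-1}(M)^*$, and let $\alpha := \rho_k \circ \rho_{k-1}^{-1}$ play the role of the given $w^*$-continuous $*$-isomorphism on $\rho_{k-1}(A^{**}e_2)$. The level-$k$ identities translate precisely into the hypotheses of Lemma \ref{10000}. That lemma then produces a $w^*$-continuous $*$-isomorphism $\beta$ on $\rho_k(A^{**})$ and a TRO homomorphism $\mu$ on $\phi_{k-1}(M)^*$ satisfying compatibility conditions A and B. We set
$$\rho_{k+1} := \beta \circ \rho_k, \qquad \phi_k(m) := \mu(\phi_{k-1}(m)^*)^* \quad (m \in M).$$

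The four identities at level $k+1$ follow by applying $\beta$ to the level-$k$ identities and using $\phi_k(M)^* = \mu(\phi_{k-1}(M)^*)$, which matches the spans produced by Lemma \ref{10000}. The two implications in the statement are exactly the translations of conclusions A and B of Lemma \ref{10000}, once one tracks that $\phi_k(m)^* = \mu(\phi_{k-1}(m)^*)$. Finally, $\rho_{k+1}$ is a $w^*$-continuous $*$-isomorphism because both $\beta$ and $\rho_k$ are, and $\phi_k$ is a TRO homomorphism as a composition of TRO homomorphisms and the involution.

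The main technical obstacle is the bookkeeping: at each step the underlying Hilbert spaces change, and one must verify the non-degeneracy condition $H = \overline{\alpha(A)(H)}$ required to invoke Lemma \ref{10000}. This can be arranged by restricting to the essential subspace of the relevant $C^*$-algebra, using its contractive approximate identity. The $\sigma$-TRO property of $\phi_k(M)$ is inherited from that of $\phi_{k-1}(M)$, since TRO homomorphisms, being contractive and respecting the triple product, carry the defining sequences of Definition \ref{110} to sequences with the same properties in the image.
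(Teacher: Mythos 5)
Your proof is correct and follows essentially the same route as the paper: the paper applies Lemma \ref{10000} to the representation $\theta_1|_{A^{**}e_2}$ (with the TRO $M^*$ in exactly the roles you assign) to produce $\theta_2$ and $\phi_1$, sets $\rho_2=\theta_2\circ\theta_1$, and then simply says ``continue inductively.'' Your write-up just makes the general inductive step explicit, including the non-degeneracy point that the paper leaves implicit.
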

\begin{proof} By Lemma \ref{10000}, given the representation $\theta _1|_{A^{**}e_2}$, there exists a $*$-isomorphism 
$$\theta _2: \theta _1(A^{**})\rightarrow \theta_2( \theta_1(A^{**})) $$
and a TRO homomorphism $\phi _1: M\rightarrow \phi _1(M)$ such that
$$\theta _2(\theta _1( A^{**}))=\overline{[\phi _1(M)^*\theta _1( A^{**}e_2)\phi _1(M)]}^{w^*} , 
\;\; \theta _1(A^{**}e_2)=\overline{[\phi _1(M)\theta _2(\theta _1(A^{**}))\phi _1(M)^*] }^{w^*} $$ 
and $$\theta _2(\theta _1(A))=\overline{[\phi _1(M)^*\theta _1(Ae_2)\phi _1(M)]}^{\|\cdot\|} , 
\;\; \theta _1(Ae_2)=\overline{[\phi _1(M)\theta _2(\theta _1(A))\phi _1(M)^*] }^{\|\cdot\|}, $$
and such that if  $a\in A^{**}, x\in A^{**}e_2, m,n \in M$, the equality $\theta _1(a)=m^*xn$ implies that $\theta_2( \theta_1(a))=
\phi _1(m)^*\theta _1(x)\phi _1(n) $ and the equality $x=m\theta _1(a)n^*$ implies that $\theta _1(x)=\phi _1(m)\theta_2( \theta_1(a))
\phi _1(n)^*. $ 

We write $\rho _0=id_{A^{**}}, \rho _1=\theta _1, \rho _2=\theta_2 \circ \theta _1$ and continue inductively.
 \end{proof}
Let $M, \theta _1$ be as in Lemma \ref{10000a}. Given the $*$-isomorphism $\theta _1^{-1}: \theta _1(A^{**})\rightarrow A^{**},$ Lemma
\ref{10000} implies that there exist a $*$-isomorphism $\sigma _1: A^{**}e_2\rightarrow \sigma _1(A^{**}e_2)$ and a TRO 
homomorphism $\chi_0 : M\rightarrow \chi _0(M)$ such that if $\chi (m)=\chi_0(m)^*,\;\;\forall \;m\;\in \;M, $ then  
$$ A^{**}=\overline{[\chi (M)\sigma _1( A^{**}e_2)\chi (M)^*]}^{w^*} , \;\; \sigma _1(A^{**}e_2)=
\overline{[\chi (M)^*A^{**}\chi (M)] }^{w^*} $$ 
and $$A=\overline{[\chi (M)\sigma _1(Ae_2)\chi (M)^*]}^{\|\cdot\|} , \;\; \sigma _1(Ae_2)=
\overline{[\chi (M)^*A\chi (M)] }^{\|\cdot\|} .$$ Furthermore, if $a\in A^{**}, m,n\in M, x\in A^{**}e_2$ then 
the equality $\theta _1(a)=m^*xn$ implies that $a=\chi(m)\sigma  _1(x) \chi (n)^*.$

\begin{lemma}\label{40001}  Let $M, \chi , \theta _1$ be as in the previous discussion, then there exists a $w^*$-continuous $*$-isomorphism  $\tau _1: A^{**}\rightarrow \tau _1(A^{**})$ and a TRO homomorphism 
$\psi _1: \chi (M)\rightarrow \psi _1(\chi (M))$ such that if $a\in A^{**}, m,n\in M, x\in A^{**}e_2$, then the
equality $a=\chi(m)\sigma  _1(x) \chi (n)^*$ implies that $a=\psi _1(\chi (m))\tau _1(x)\psi _1(\chi (n))^*$ 
and $\sigma _1(x)=\chi (m)^*a \chi(n) $ implies that $\tau _1(x)=\psi _1(\chi (m))^*a \psi _1(\chi (n)). $
Thus, $$ A^{**}=\overline{[  \psi _{1}(\chi (M))\tau _{1}( A^{**}e_2)\psi _{1}(\chi (M))^*   ]}^{w^*} ,$$
$$ A=\overline{[  \psi _{1}(\chi (M))\tau _{1}( Ae_2)\psi _{1}(\chi (M))^*   ]}^{\|\cdot\|} ,$$
$$ \tau _{1}(A^{**}e_2)=\overline{[ \psi _{1}(\chi (M))^*A^{**}\psi _{1}(\chi (M))    ] }^{w^*} $$ 
$$ \tau _{1}(Ae_2)=\overline{[ \psi _{1}(\chi (M))^*A\psi _{1}(\chi (M))    ] }^{\|\cdot\|} .$$ 
\end{lemma}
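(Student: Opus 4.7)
The plan is to apply Lemma \ref{10000} once more, this time feeding in the identity representation of $A^{**}$ in order to read off an induced representation on the $\sigma_1(A^{**}e_2)$-side. Setting $N := \chi(M)$, the two displayed identities immediately preceding Lemma \ref{40001} exhibit $A^{**}$ and $\sigma_1(A^{**}e_2)$ as TRO-equivalent via $N^*$: they are precisely the hypotheses ``$A=\overline{[M^*BM]}^{w^*}$'' and ``$B=\overline{[MAM^*]}^{w^*}$'' of Lemma \ref{10000} under the role assignment $A\leftrightarrow A^{**}$, $B\leftrightarrow \sigma_1(A^{**}e_2)$, $M\leftrightarrow N^*$ (the $\hat A$, $\hat B$ hypotheses being automatic since both sides are already von Neumann algebras, and the norm-closed versions of the equalities holding by the analogous display for $A$ and $\sigma_1(Ae_2)$).

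With this role assignment in place, I will take $\alpha := \mathrm{id}_{A^{**}}: A^{**} \to A^{**}\subseteq B(H)$, which is $w^*$-continuous, completely isometric, and nondegenerate ($H=\overline{A^{**}H}$, because the $C^*$-algebra $A$ acts nondegenerately on $H$ via its contractive approximate identity). Lemma \ref{10000} will then deliver a Hilbert space $K$, a $w^*$-continuous injective $*$-homomorphism $\beta: \sigma_1(A^{**}e_2)\to B(K)$, and a TRO homomorphism $\mu: N^*\to B(H,K)$ satisfying properties (A) and (B). I will set $\psi_1(\chi(m)) := \mu(\chi(m)^*)^* \in B(K,H)$, which defines a TRO homomorphism $\psi_1: \chi(M)\to \psi_1(\chi(M))$, and $\tau_1(x) := \beta(\sigma_1(x))$ for $x\in A^{**}e_2$. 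Writing out property (A) with $m := \chi(\tilde m)^*\in N^*$, $n := \chi(\tilde n)^*\in N^*$, and $b := \sigma_1(x)$ converts the hypothesis $a = \chi(\tilde m)\sigma_1(x)\chi(\tilde n)^*$ into the conclusion $a = \psi_1(\chi(\tilde m))\tau_1(x)\psi_1(\chi(\tilde n))^*$; property (B) transcribes in the same way to the second implication. The four displayed closure identities come for free from the corresponding ($w^*$- and $\|\cdot\|$-closed) conclusions of Lemma \ref{10000}.

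The one remaining technicality is that the statement demands $\tau_1$ be a $*$-isomorphism on all of $A^{**}$, whereas $\beta\circ\sigma_1$ is only defined on $A^{**}e_2$. To fix this I will pick any faithful $w^*$-continuous $*$-representation $\rho$ of the complementary central summand $A^{**}(\mathrm{id}_{A^{**}}-e_2)$ on an auxiliary Hilbert space $K'$ and enlarge the construction by setting $\tau_1(a) := \beta(\sigma_1(ae_2))\oplus\rho(a(\mathrm{id}_{A^{**}}-e_2))$ and padding each $\psi_1(\chi(m))$ to act as zero on $K'$. Since $\tau_1|_{A^{**}e_2}$ lives entirely in the $K$-corner of $B(K\oplus K')$ and the enlarged $\psi_1$ never interacts with $K'$, the two implications and the four closure identities from the previous paragraph pass unchanged to $B(K\oplus K')$. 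I expect the main obstacle to be purely bookkeeping: because $\chi$ is itself defined via adjoints ($\chi(m)=\chi_0(m)^*$), converting the $m^*$--$n$ pattern required by the hypothesis of Lemma \ref{10000} into the $\chi(\tilde m)$--$\chi(\tilde n)^*$ pattern appearing in the conclusion of Lemma \ref{40001} requires several layers of $*$ to be tracked carefully.
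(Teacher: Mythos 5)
Your proof is correct, but it is heavier than the one in the paper, and the extra machinery is not needed. The paper does not invoke Lemma \ref{10000} again at this stage: it simply keeps $\sigma_1$ and $\chi(M)$ exactly as they stand and defines $\tau_1(a)=\sigma_1(ae_2)\oplus ae_2^\bot$ and $\psi_1(\chi(m))=(\chi(m)\;\;0)$, after which both implications follow from a one-line $1\times 2$ matrix computation. This is precisely your ``padding'' step, specialized to $\beta=\mathrm{id}$, $\mu=\mathrm{id}$, and $\rho$ the identity representation of $A^{**}e_2^\bot$ on $e_2^\bot(H)$. Your preliminary application of Lemma \ref{10000} with $\alpha=\mathrm{id}_{A^{**}}$ and the role assignment $M\leftrightarrow\chi(M)^*$ is legitimate --- you track the adjoints correctly and properties (A) and (B) do transcribe into the two required implications --- but it buys nothing: the displayed identities immediately preceding the lemma already anchor the TRO-equivalence at the identity representation of $A^{**}$, so there is no need to manufacture a new representation $\beta$ of $\sigma_1(A^{**}e_2)$; the entire content of the lemma is the direct-sum extension to the complementary central summand, which you also carry out correctly. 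One caveat, which applies equally to the paper's own uses of Lemma \ref{10000} in Lemmas \ref{30000} and \ref{40002}: properties (A) and (B) are stated there for elements of the norm-closed algebras, and your argument (like the paper's) needs them for elements of $A^{**}$ and $A^{**}e_2$, i.e.\ for the $w^*$-closures; this extension is implicit in the paper's reading of Lemma \ref{10000} and should at least be acknowledged.
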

\begin{proof} Define the $*$-isomorphism $\tau _1: A^{**}\rightarrow \sigma _1(A^{**}e_2)\oplus A^{**}e_2^\bot ,$ 
given by $\tau _1(a)=\sigma _1(ae_2)\oplus ae_2^\bot ,$ and the TRO homomorphism $\psi_1: \chi (M)\rightarrow  \psi_1(\chi (M)) $ given by  
$\psi _1(\chi (m))=(\chi (m) \;\;0).$ If  $a\in A^{**}, m,n\in M, x\in A^{**}e_2$ satisfies   
$a=\chi(m)\sigma  _1(x) \chi (n)^*$, then
$$a=(\chi (m) \;\;0) \left (\begin{array}{clr}\sigma _1(x) & 0 \\ 0 & 0 \end{array}\right)
(\chi (n)^*\;\; 0)^t=\psi _1(\chi (m))\tau _1(x)\psi _1(\chi (n))^*.$$ 
 Furthermore, if $\sigma _1(x)=\chi (m)^*a \chi(n) $, then 
$$\tau _1(x)=\left(\begin{array}{clr}\sigma _1(x) & 0 \\ 0 & 0 \end{array}\right)
=\left(\begin{array}{clr}\chi (m)^*a\chi (n) & 0 \\ 0 & 0\end{array}\right)=$$$$(\chi (m)^* \;\;0)^ta(\chi (n) \;\;0)= 
\psi _1(\chi (m))^*a \psi _1(\chi (n)). $$ 
\end{proof}

\begin{lemma}\label{40002}  Let $\tau _1, M, \chi , \psi _1$ be as in Lemma \ref{40001}. Then, there exist $w^*$- continuous $*$-isomorphisms  $\tau _k: A^{**}\rightarrow \tau _k(A^{**})$ and  TRO homomorphisms 
$\psi _k:\chi ( M)\rightarrow \psi _k(\chi (M))$ such that if $a\in A^{**}, m,n\in M, x\in A^{**}e_2$ the 
equality $a=\psi _1(\chi (m))\tau  _1(x) \psi _1(\chi (n))^*$ implies that $\tau _k(a)=\psi _{k+1}(\chi (m))
\tau _{k+1}(x)\psi _{k+1}(\chi (n))^*$ 
and $\tau _1(x)=\psi _1(\chi (m))^*a \psi _1(\chi (n)) $ implies that $\tau _{k+1}(x)=\psi _{k+1}(\chi (m))^*\tau _k(a) \psi _{k+1}(\chi (n))$
 for all $k=1,2,\ldots$. Thus, $$ \tau _k(A^{**})=\overline{[  \psi _{k+1}(\chi (M))
\tau _{k+1}( A^{**}e_2)\psi _{k+1}(\chi (M))^*   ]}^{w^*} ,$$
$$ \tau _k(A)=\overline{[  \psi _{k+1}(\chi (M))\tau _{k+1}( Ae_2)\psi _{k+1}(\chi (M))^*   ]}^{\|\cdot\|} ,$$
$$ \tau _{k+1}(A^{**}e_2)=\overline{[ \psi _{k+1}(\chi (M))^*\tau _k(A^{**})\psi _{k+1}(\chi (M))  ] }^{w^*} $$ 
$$ \tau _{k+1}(Ae_2)=\overline{[ \psi _{k+1}(\chi (M))^*\tau _k(A)\psi _{k+1}(\chi (M))    ] }^{\|\cdot\|} .$$ 
\end{lemma}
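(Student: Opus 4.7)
The plan is to prove the lemma by induction on $k\geq 1$, constructing $(\tau_{k+1},\psi_{k+1})$ from $(\tau_k,\psi_k)$ by a single application of Lemma \ref{10000} at each step, followed by the direct-sum extension already used in the proof of Lemma \ref{40001}. The construction runs in complete parallel with Lemma \ref{30000}, but on the ``other side'' of the TRO-equivalence that Lemma \ref{40001} produced.

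At step $k$ the induction hypothesis supplies the TRO-equivalence
$$\tau_{k-1}(A^{**})=\overline{[\psi_k(\chi(M))\tau_k(A^{**}e_2)\psi_k(\chi(M))^*]}^{w^*},\quad \tau_k(A^{**}e_2)=\overline{[\psi_k(\chi(M))^*\tau_{k-1}(A^{**})\psi_k(\chi(M))]}^{w^*}$$
(with the convention $\tau_0=\id_{A^{**}}$, $\psi_1$ and $\tau_1$ being those of Lemma \ref{40001}). I apply Lemma \ref{10000} to this pair, taking the $w^*$-continuous $*$-isomorphism $\tau_k\circ\tau_{k-1}^{-1}\colon\tau_{k-1}(A^{**})\to\tau_k(A^{**})$ as the input representation $\alpha$. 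Lemma \ref{10000} returns a $w^*$-continuous $*$-isomorphism $\beta$ on $\tau_k(A^{**}e_2)$ and a TRO homomorphism $\nu$ on $\psi_k(\chi(M))$ verifying clauses A) and B) of that lemma. I then set
$$\tau_{k+1}(a)=\beta(\tau_k(ae_2))\oplus\tau_k(ae_2^\bot),\qquad \psi_{k+1}(\chi(m))=(\nu(\psi_k(\chi(m)))\;\;0),$$
extending to a $*$-isomorphism on all of $A^{**}$ and a TRO homomorphism on $\chi(M)$ exactly as in the proof of Lemma \ref{40001}.

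With these definitions the four displayed closure identities at step $k+1$ drop out of Lemma \ref{10000}'s concluding identities, once one observes that the complementary summand coming from $ae_2^\bot$ is annihilated by left- and right-multiplication with any element of $\psi_{k+1}(\chi(M))$ (the row $(\nu(\cdot)\ 0)$ kills the second block). The required implication
$$a=\psi_1(\chi(m))\tau_1(x)\psi_1(\chi(n))^*\ \Longrightarrow\ \tau_{k+1}(a)=\psi_{k+2}(\chi(m))\tau_{k+2}(x)\psi_{k+2}(\chi(n))^*$$
is obtained by chaining compatibilities: the induction hypothesis provides the relation $\tau_k(a)=\psi_{k+1}(\chi(m))\tau_{k+1}(x)\psi_{k+1}(\chi(n))^*$, and clause A) of Lemma \ref{10000} applied at step $k+1$ then advances the index by one. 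The dual implication involving $\tau_1(x)=\psi_1(\chi(m))^*a\psi_1(\chi(n))$ is obtained identically from clause B).

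The main technical burden I anticipate is bookkeeping the block-matrix structure generated by the iterated direct-sum extensions, and making sure that, at every stage, the TRO homomorphism $\psi_{k+1}$ remains supported on the $\tau_k(A^{**}e_2)$-block while $\tau_{k+1}$ splits along the same decomposition. Because the complementary summand is a zero row/column for every $\psi_j(\chi(M))$ and is invariant under each $\tau_j$, however, no cross-terms are produced and the verification reduces in each step to the non-trivial block, which is the same calculation already performed once in the proof of Lemma \ref{40001}.
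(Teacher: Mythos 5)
Your proposal is correct and follows essentially the same route as the paper: at each stage apply Lemma \ref{10000} to the current TRO-equivalence with the appropriate $*$-isomorphism of the $\tau_k(A^{**})$-side as input, then extend the resulting representation of the $A^{**}e_2$-corner to all of $A^{**}$ by the direct-sum/row-matrix trick of Lemma \ref{40001}, so that the complementary block is annihilated and the implications chain. The only (harmless) cosmetic difference is your complementary summand $\tau_k(ae_2^\bot)$ in place of the paper's $ae_2^\bot$.
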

\begin{proof}Lemma \ref{10000} implies that given the $*$-isomorphism $\tau_1: A^{**}\rightarrow  \tau _1(A^{**})$, 
there exist a $w^*$-continuous $*$-isomorphism $\tau_{2,0}: A^{**}e_2\rightarrow  \tau _{2,0}(A^{**}e_2)$ 
and a TRO homomorphism $\zeta : \chi (M)\rightarrow \zeta (\chi (M)) $ such that if $a\in A^{**}, m,n\in M, x\in A^{**}e_2$, then the
equality $a=\psi _1(\chi (m))\tau _1(x) \psi _1(\chi (n))^*$ implies that $\tau _1(a)=\zeta (\chi (m))\tau _{2,0}(x)\zeta (\chi (n))^*$ 
and $\tau _1(x)=\psi _1(\chi (m))^*a \psi _1(\chi ( n)) $ implies that $\tau_{2,0}(x)=\zeta (\chi (m))^*\tau _1(a)\zeta (\chi (n)). $ 
For every $a\in A^{**}, m\in M,$ we define
$$\tau _2(a)=\tau _{2,0}(ae_2)\oplus ae_2^\bot ,\;\;\;\psi _2(\chi (m))=(\zeta (\chi (m)) \;\; 0).$$
If $a\in A^{**}, m,n\in M, x\in A^{**}e_2$, then the
equality $a=\psi _1(\chi (m))\tau _1(x) \psi _1(\chi (n))^*$ implies that
$$\tau _1(a)=\zeta (\chi (m))\tau _{2,0}(x)\zeta (\chi (n))^*=(\zeta (\chi (m)) \;\; 0) \left(\begin{array}{clr}
 \tau _{2,0}(x) &0 \\ 0& 0\end{array}\right)(\zeta (\chi (n))^*\;\; 0)^t
=$$ $$ \psi _2(\chi (m))\tau _2(x)\psi _2(\chi (n))^*$$ 
and the equality $\tau _1(x)=\psi _1(\chi (m))^*a \psi _1(\chi (n)) $ implies that
$$\tau _2(x)=\left(\begin{array}{clr} \tau _{2,0}(x) &0 \\ 0& 0\end{array}\right)=
\left(\begin{array}{clr} \zeta (\chi (m))^*\tau _{1}(a)\zeta (\chi (n))  &0 \\ 0& 0\end{array}\right)=$$$$
(\zeta (\chi (m))^* \;\; 0)^t\tau _1(a)(\zeta (\chi (n)) \;\; 0)=\psi _2(\chi (m))^*\tau _1(a)\psi _2(\chi (n)). $$ 
We continue inductively.
\end{proof}

\begin{lemma}\label{50000} There exist a faithful $*$-homomorphism $\alpha : A^{**}\rightarrow B(L),$ 
where $L$ is a Hilbert space such that $\overline{\alpha (A)(L)}=L,$ and a $\sigma $-TRO $N\subseteq B(\alpha (e_2)(L), L)$ such that 
$$\alpha (A^{**})=\overline{[N\alpha (A^{**}e_2)N^*]}^{w^*} , \;\; \alpha (A^{**}e_2)=\overline{[N^*\alpha (A^{**})N] }^{w^*} $$ 
and $$\alpha (A)=\overline{[N\alpha (Ae_2)N^*]}^{\|\cdot\|} , \;\; \alpha (Ae_2)=\overline{[N^*\alpha (A)N] }^{\|\cdot\|}. $$
\end{lemma}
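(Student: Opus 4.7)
The strategy is to glue the sequences $(\rho_k)_{k \geq 0}$, $(\phi_k)$ from Lemma \ref{30000} together with $(\tau_k)_{k \geq 1}$, $(\psi_k)$ from Lemma \ref{40002} into a single faithful representation $\alpha$ of $A^{**}$ and a single $\sigma$-TRO $N$ whose diagonal blocks realize the required TRO-equivalence between $\alpha(A^{**})$ and $\alpha(A^{**}e_2)$. Let $L_k$ denote the Hilbert space on which $\rho_k$ acts ($k \geq 0$) and $L'_k$ the one on which $\tau_k$ acts ($k \geq 1$); by restricting to the essential subspaces we may assume $\rho_k(A)$ and $\tau_k(A)$ act nondegenerately. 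Set $L = \bigoplus_{k \geq 0} L_k \oplus \bigoplus_{k \geq 1} L'_k$ and $\alpha = \bigoplus_{k \geq 0} \rho_k \oplus \bigoplus_{k \geq 1} \tau_k$. This is a faithful $*$-homomorphism since $\rho_0 = id_{A^{**}}$, and $\overline{\alpha(A)(L)} = L$ by nondegeneracy.

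Define $N \subseteq B(\alpha(e_2)(L), L)$ as the norm closure of the linear span of block operators whose only nonzero blocks are (i) $\phi_{k-1}(M)^* \colon L_{k-1} \to L_k$ for $k \geq 1$, (ii) $\psi_1(\chi(M)) \colon L'_1 \to L_0$, and (iii) $\psi_{k+1}(\chi(M)) \colon L'_{k+1} \to L'_k$ for $k \geq 1$. That $N$ factors through $\alpha(e_2)(L)$ follows because, e.g., $\phi_{k-1}(M) \phi_{k-1}(M)^* \subseteq \rho_{k-1}(A^{**}e_2)$ forces $\phi_{k-1}(M)^*$ to annihilate the orthogonal complement of $\rho_{k-1}(e_2)(L_{k-1})$, and similarly for the $\psi$-blocks.

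The required equalities reduce to a block-matrix computation. For $k \geq 1$, the $(L_k, L_k)$-diagonal block of $N\alpha(A^{**}e_2)N^*$ is $\phi_{k-1}(M)^* \rho_{k-1}(A^{**}e_2) \phi_{k-1}(M)$, with $w^*$-closure $\rho_k(A^{**})$ by Lemma \ref{30000}; the $(L_0, L_0)$-block is $\psi_1(\chi(M)) \tau_1(A^{**}e_2) \psi_1(\chi(M))^*$, with $w^*$-closure $A^{**} = \rho_0(A^{**})$ by Lemma \ref{40001}; and the $(L'_k, L'_k)$-block is $\psi_{k+1}(\chi(M)) \tau_{k+1}(A^{**}e_2) \psi_{k+1}(\chi(M))^*$, with $w^*$-closure $\tau_k(A^{**})$ by Lemma \ref{40002}. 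All off-diagonal blocks vanish, because by inspection two distinct blocks in the lists (i)--(iii) never share both of the indices needed to produce an off-diagonal contribution. The reverse equality $\alpha(A^{**}e_2)=\overline{[N^*\alpha(A^{**})N]}^{w^*}$ and the norm-closed versions for $A$ and $Ae_2$ follow from the symmetric halves of the same lemmas. The TRO identity $NN^*N \subseteq N$ reduces to the TRO identity for each individual $\phi_{k-1}(M)$ and $\psi_k(\chi(M))$, by the same block-path bookkeeping.

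It remains to check that $N$ is $\sigma$. Each constituent is itself a $\sigma$-TRO, being the image of $M$ (respectively $\chi(M)=\chi_0(M)^*$) under a TRO homomorphism, which transports the defining sequences from Definition \ref{110}. Because the constituents occupy mutually disjoint rectangular blocks of $B(L)$, a diagonal enumeration of their countably many approximate units produces sequences $\{m_i\}, \{n_i\} \subseteq N$ whose partial sums $\sum_{i=1}^l m_im_i^*$ and $\sum_{i=1}^l n_i^*n_i$ are block-diagonal with each block bounded by $1$, hence bounded by $1$ overall; on any fixed element of $N$, which is supported in finitely many blocks, convergence to the identity action is inherited from convergence within the relevant constituents. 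The main technical obstacle, therefore, is keeping the countable block bookkeeping consistent so that the norm bounds and the approximate-identity property hold simultaneously in the single sequence; this is forced by the orthogonality of the block supports in the direct-sum Hilbert space $L$.
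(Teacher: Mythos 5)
Your $\alpha$ is exactly the one used in the paper's proof: the paper sets $\alpha(a)=\cdots\oplus\tau_2(a)\oplus\tau_1(a)\oplus a\oplus\rho_1(a)\oplus\rho_2(a)\oplus\cdots$, which is your direct sum after reindexing, and the rectangular block positions you assign to $N$ are the same ones that appear there. The gap lies in what $N$ is and in how the displayed equalities are verified. You define $N$ as the norm-closed span of operators each supported in a single block, with the entries of different blocks chosen independently; the paper instead takes $N=\zeta(M)$, where for a single $m\in M$ the operator $\zeta(m)$ occupies \emph{all} of the blocks $\psi_{k}(\chi(m))$, $m^*$, $\phi_k(m)^*$ simultaneously, coherently indexed by the same $m$. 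This matters because $\alpha(A^{**})$ is not the direct sum $\bigoplus_k\rho_k(A^{**})\oplus\bigoplus_k\tau_k(A^{**})$ of the block algebras but the proper ``diagonal'' subalgebra of coherent tuples $(\ldots,\tau_1(a),a,\rho_1(a),\ldots)$. With the independent-blocks $N$ you describe, $\overline{[N\alpha(A^{**}e_2)N^*]}^{w^*}$ is the full block direct sum, which strictly contains $\alpha(A^{**})$, so the first asserted equality fails. Even reading your $N$ as the coherent $\zeta(M)$, your verification --- computing the closure of each diagonal block separately --- only shows that the \emph{compression} of $\overline{[N\alpha(A^{**}e_2)N^*]}$ to each summand is $\rho_k(A^{**})$ (resp. $\tau_k(A^{**})$); it gives neither the containment of the closed span inside $\alpha(A^{**})$ nor coherent approximants of a fixed $\alpha(a)$ across all blocks at once.

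The missing step is precisely the element-wise intertwining clauses of Lemmas \ref{30000}, \ref{40001} and \ref{40002}, which you cite but do not use in their pointwise form: if $\rho_1(a)=\theta_1(a)=m^*xn$ with $x\in A^{**}e_2$, then $\rho_{k+1}(a)=\phi_k(m)^*\rho_k(x)\phi_k(n)$, $a=\psi_1(\chi(m))\tau_1(x)\psi_1(\chi(n))^*$ and $\tau_k(a)=\psi_{k+1}(\chi(m))\tau_{k+1}(x)\psi_{k+1}(\chi(n))^*$ for all $k$, whence the single identity $\zeta(m)\alpha(x)\zeta(n)^*=\alpha(a)$ for one and the same $a$. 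This identity and its adjoint version are what convert the blockwise computation into the four asserted equalities; they also make the $\sigma$-TRO property of $N$ immediate, since $\zeta$ is then a TRO morphism (up to adjoints) of the $\sigma$-TRO $M$, so the defining sequences of Definition \ref{110} transport directly, with no need for a diagonal re-enumeration of approximate units over the blocks.
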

\begin{proof} We recall the maps $\theta _1, \tau _k, \rho _k$ from Lemmas \ref{30000}, \ref{40001}, and \ref{40002}. We denote
$$\alpha (a)=\ldots\oplus \tau _2(a)\oplus \tau _1(a)\oplus a\oplus \rho _1(a)\oplus \rho _2(a)\oplus \ldots$$
for all $a\in A^{**}.$ We also recall the maps $\psi _k, \phi _k, \chi $, and for each $m\in M,$ we let $\zeta (m)$ 
be the $\infty \times \infty $  matrix whose first diagonal under the main diagonal is 
$$(\ldots, \psi _2(\chi (m)),\;\; \psi _1(\chi (m)),\;\;  m^*,\;\; \phi _1(m)^*,\;\;  \phi _2(m)^*, \ldots)$$
where the other diagonals have zero entries. Clearly, $\zeta (M)$ is a  $\sigma $-TRO.

Let $a\in A^{**}, x\in A^{**}e_2, m,n \in M$ be such that $\rho _1(a)=\theta _1(a)=m^*xn. $ Then, by Lemma \ref{30000} 
we have that $$\rho _{k+1}(a)=\phi _k(m)^*\rho _k(x)\phi _k(n),\;\;\forall \;k=1,2,3,...$$
Furthermore, following the discussion for the previous Lemma \ref{40001}, we have that
$a=\chi (m)\sigma _1(x)\chi (n)^*,$ which by Lemma \ref{40001} implies that $a=\psi _1(\chi (m))\tau _1(x)\psi _1(\chi (n))^*
.$ By Lemma \ref{40002}, we have that
$$\tau _k(a)=\psi _{k+1}(\chi (m))\tau _{k+1}(x)\psi _{k+1}(\chi (n))^*, \;\;\forall \;k=1,2,3,...$$
Therefore,
\begin{align*}& \zeta (m)\alpha (x)\zeta (n)^*=\\
& \ldots \psi _2(\chi (m))\tau _2(x)\psi _2(\chi (n))^* \oplus  \psi _1(\chi (m))\tau _1(x)\psi _1(\chi (n))^*\oplus 
\\ &  m^*xn\oplus   \phi _1(m)^*\rho _1(x)\phi _1(n)  \oplus 
\phi _2(m)^*\rho _2(x)\phi _2(n)  \oplus \ldots=\\& 
\ldots \oplus \tau _1(a)\oplus a\oplus \rho _1(a)\oplus \rho _2(a)\oplus \rho _3(a)\ldots=\alpha (a).
 \end{align*}
 We conclude that $$ \alpha (A^{**})=\overline{[\zeta (M)\alpha (A^{**}e_2)\zeta (M)^*]}^{w^*} ,\;\;\; 
\alpha (A)=\overline{[\zeta (M)\alpha (Ae_2)\zeta (M)^*]}^{\|\cdot\|}.$$ 
Similarly, we can see that $$ \alpha (A^{**}e_2)=\overline{[\zeta (M)^*\alpha (A^{**})\zeta (M)] }^{w^*},\;\; 
 \alpha (Ae_2)=\overline{[\zeta (M)^*\alpha (A)\zeta (M)] }^{\|\cdot\|}. $$
\end{proof}

\begin{lemma}\label{60000} Let $A$ be a $C^*$ algebra and $e_1, e_2\in Z(A^{**})$ be projections such that $Ae_2$ is a $C^*$-
algebra, $A\sim _{\sigma \Delta }Ae_2$, and $e_2\leq e_1\leq e_0=id_{A^{**}}, e_2\neq e_1\neq e_0.$ 
Then, there exist central projections $q, p, r\in A^{**}$ such that 
$$e_0=p\oplus q,\; e_1=r\oplus q,\; p\bot q,\; r\bot q,$$ and $$Ap\sim _{\sigma \Delta }Ar.$$  
\end{lemma}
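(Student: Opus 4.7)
My plan is to leverage the iterative structure provided by the equivalence $A\sim_{\sigma\Delta}Ae_2$ (through Lemma \ref{50000} and Theorem \ref{225}) to construct the central projection $q$ via iteration of the associated center-isomorphism, and then to assemble a $\sigma$-TRO implementing the required absorption.

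\textbf{Step 1.} By Theorem \ref{225}, the equivalence $A\sim_{\sigma\Delta}Ae_2$ amounts to a $*$-isomorphism $\Phi:A\otimes\cl K\to Ae_2\otimes\cl K$. Taking second duals and using that the center of a $W^*$-algebra tensored with $B(\ell^2(\bb N))$ equals the center of the first factor, one extracts a $*$-isomorphism of centers $\Psi:Z(A^{**})\to Z(A^{**})e_2$ with $\Psi(e_0)=e_2$. Compressing $\Phi$ by $e_1\otimes 1$ and its complement, we obtain $Ae_1\sim_{\sigma\Delta}A\Psi(e_1)$ and $Ae_1^\perp\sim_{\sigma\Delta}A(e_2-\Psi(e_1))$.

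\textbf{Step 2.} Viewing $\Psi$ as a self-map of $Z(A^{**})$ via the inclusion $Z(A^{**})e_2\subseteq Z(A^{**})$, the iterates $\Psi^k(e_2)$ form a decreasing sequence of central projections. Set $q^*:=\bigwedge_{k\ge 0}\Psi^k(e_2)$ and $g_k:=\Psi^k(e_2)-\Psi^{k+1}(e_2)$, so the $g_k$ are pairwise orthogonal central projections with supremum $e_2-q^*$. By successively restricting $\Phi$ to the ideals $Ae_2\otimes\cl K\supseteq A\Psi(e_2)\otimes\cl K\supseteq\cdots$, each cut-down $Ag_k$ becomes $\sigma\Delta$-equivalent to $Ae_2^\perp$, so $Ar=A(e_2-q^*)$ stably splits into countably many pieces all $\sigma\Delta$-equivalent to $Ae_2^\perp$.

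\textbf{Step 3.} Define
$$q:=(e_1-e_2)+q^*,\qquad r:=e_1-q=e_2-q^*,\qquad p:=e_0-q=e_1^\perp+r,$$
which are central projections in $A^{**}$. Then $q\le e_1$, and the relations $e_0=p\oplus q$, $e_1=r\oplus q$, $p\bot q$, $r\bot q$ are immediate from the construction.

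\textbf{Step 4.} To prove $Ap\sim_{\sigma\Delta}Ar$, I use the countable orthogonal decomposition $r=\bigvee_k g_k$ to show that $Ar$ stably carries infinitely many copies of $Ae_1^\perp$ (since $Ae_1^\perp$ is a $\sigma\Delta$-quotient of $Ae_2^\perp\sim_{\sigma\Delta}Ag_k$ for each $k$). A $\sigma$-TRO implementing $Ap\sim_{\sigma\Delta}Ar$ is then assembled by taking the $\sigma$-TRO direct sum (in the sense of Lemmas \ref{140} and \ref{150}) of the countable family of gap equivalences, together with the equivalence $Ae_1^\perp\sim_{\sigma\Delta}A(e_2-\Psi(e_1))$; the infinite multiplicity absorbs the extra $Ae_1^\perp$ summand that distinguishes $p$ from $r$.

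\textbf{Main obstacle.} The delicate point is Step 4: the $C^*$-algebra $Ap$ does not in general split as a $C^*$-direct sum $Ae_1^\perp\oplus Ar$ (the natural injection $Ap\hookrightarrow Ae_1^\perp\oplus Ar$ need not be surjective, due to a Chinese-remainder-type obstruction), even though the corresponding bidual does split as a $W^*$-direct sum. Hence the absorption $Ae_1^\perp\oplus Ar\sim_{\sigma\Delta}Ar$ must be implemented by an explicit $\sigma$-TRO rather than by a naive direct-sum argument, and the bookkeeping must keep track of the $\sigma$-unitality condition (through the iterated $\sigma$-TROs of Lemma \ref{50000}) across the countably many pieces.
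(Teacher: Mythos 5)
Your Steps 1--3 set up a legitimate Schr\"oder--Bernstein-type iteration, but the proof fails exactly where you yourself flag it: Step 4 is not an obstacle to be noted, it is the entire content of the lemma, and your choice of decomposition makes it unprovable by the means you have available. With $q=(e_1-e_2)+q^*$ you end up with $p=(e_0-e_1)+r$, so you must absorb one extra copy of $Ae_1^\perp$ into $Ar$ across a countable family of bidual-level direct summands $g_k$. Since $Ar$ is not the $c_0$-sum $\bigoplus_k Ag_k$ and $Ap$ is not $Ae_1^\perp\oplus Ar$ (as you observe), the regrouping that makes the absorption work for von Neumann algebras has no norm-closed counterpart, and you have no spatial data with which to build the implementing $\sigma$-TRO: the isomorphism $\Psi$ of Step 1 is an abstract map of centers, so there is nothing to compress. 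Lemmas \ref{140} and \ref{150} let you combine finitely many TROs sharing ambient Hilbert spaces; they do not assemble a countable family of equivalences between algebras living on unrelated spaces into a single $\sigma$-TRO, and no lemma in the paper does.

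The paper's proof avoids this by arranging that $r$ is the image of $p$ under a \emph{single} application of a spatially implemented isomorphism. Lemma \ref{50000} first produces a faithful representation in which $A$ and $Ae_2$ are linked by a concrete $\sigma$-TRO $M$ with $A=\overline{[MAe_2M^*]}^{\|\cdot\|}$, and then \cite{eletro} gives a $*$-isomorphism $\phi:(A^{**})'\to(A^{**})'e_2$ satisfying the intertwining relation $am=m\phi(a)$. Iterating $\phi$ yields $e_{2n}=\phi^n(e_0)\geq e_{2n+1}=\phi^n(e_1)\geq e_{2n+2}$, and the decomposition is taken to be $p=\sum_n\phi^n(e_0-e_1)$ (all ``even'' gaps), $q=\sum_n\phi^n(e_1-e_2)\oplus(\wedge_ne_n)$, $r=\phi(p)=p-(e_0-e_1)$. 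Because $r=\phi(p)$, the compression $N=pMr$ satisfies $pM=M\phi(p)=Mr$ and is itself a $\sigma$-TRO implementing $Ap=\overline{[NArN^*]}^{\|\cdot\|}$ and $Ar=\overline{[N^*ApN]}^{\|\cdot\|}$ in one step --- no infinite assembly and no $C^*$-direct-sum splitting is ever needed. To repair your argument you would have to replace your $q$ by one that collects \emph{all} the iterated gaps $\Psi^k(e_1-e_2)$ (not just the first one plus $q^*$), and, more fundamentally, replace the abstract center isomorphism $\Psi$ by a spatially intertwined one so that the equivalence $Ap\sim_{\sigma\Delta}Ar$ can be witnessed by compressing an existing $\sigma$-TRO; at that point you have reconstructed the paper's proof.
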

\begin{proof} From Lemma \ref{50000}, we may assume that $$A\subseteq A^{**}\subseteq B(H), e_0=I_H$$ and there exists a $\sigma $-TRO $M\subseteq B(e_2(H), H)$ such that $$A^{**}=\overline{[M A^{**}e_2M^*]}^{w^*} , \;\; A^{**}e_2=  \overline{[M^*A^{**}M] }^{w^*}  $$ 
and $$A=\overline{[M Ae_2M^*]}^{\|\cdot\|} , \;\; Ae_2=\overline{[M^*AM] }^{\|\cdot\|}. $$
By Proposition 2.8 and Theorem 3.3 in \cite{eletro}, there exists a $*$-isomorphism $$\phi : (A^{**})^\prime \rightarrow (A^{**})^\prime e_2\subseteq B(e_2(H))$$
such that $$am=m\phi (a),\;\;\forall \;a\in (A^{**})^\prime ,\;\;m\in M.$$
By induction, there exist central projections $\{e_n: n\in \bb N\}\subseteq A^{**}$ such that 
$$\phi (e_n)=e_{n+2}, \;\;\;e_{n+1}\leq e_n, \;\;\forall n=0,1,2,\ldots$$
Define $$p= \sum_{n=0}^\infty (e_{2n}-e_{2n+1}) , \;\;\; q=\sum_{n=0}^\infty (e_{2n+1}-e_{2n+2})\oplus (\wedge _n e_n). $$
Then, $e_0=p\oplus q.$ If $$r=\phi (p)=\sum_{n=0}^\infty (e_{2n+2}-e_{2n+3}), $$
then $e_1=r\oplus q.$  We define $N=pMr.$ 
Because $$NN^*N=pMrM^*pMr=pM\phi (p)M^*pM\phi (p)=pMM^*M \phi (p)\subseteq pMr=N,$$ 
 $N$ is a TRO. Furthermore, the fact that $M$ is a $\sigma $-TRO implies that $N$ is a $\sigma $-TRO. We have that $$Ar=A\phi (p)=
Ae_2\phi (p)= \overline{[\phi (p)M^*AM\phi (p)] } ^{\|\cdot\|} .$$
Thus, because $pM=M\phi (p),$ we have that $$Ar=
\overline{[N^*ApN]} ^{\|\cdot\|}. $$ Similarly, we can prove that $Ap=\overline{[NArN^*]}^{\|\cdot\|}.$
 Therefore, $Ap \sim _{\sigma \Delta }Ar.$ 
\end{proof}

\begin{theorem}\label{vary} Let $A, B$ be $C^*$-algebras such that $A\subset _{\sigma \Delta }B, \;\;\;B\subset _{\sigma \Delta }A.$ 
Assume that $e_0=id_{A^{**}}, \;\hat e_0=id_{B^{**}}.$  Then, there exist projections 
$ r\in Z(A^{**}) , \hat r\in Z(B^{**}) $
such that 
$$ Ar\sim _{\sigma \Delta} B\hat r  ,\;\;\;A(e_0-r)\sim _{\sigma \Delta} B(\hat e_0-\hat r ). $$
\end{theorem}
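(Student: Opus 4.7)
The plan is to reduce the theorem to Lemma \ref{60000} by extracting, from the two embeddings, a central projection $e_2 \in Z(A^{**})$ with $A \sim_{\sigma \Delta} Ae_2$, and then invoking that lemma to split $A$ and $B$ into matching pieces.

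First, I would translate the hypotheses into the form required by Lemma \ref{60000}. Since $A$ and $B$ are $C^*$-algebras, $\Delta(A^{**})=A^{**}$ and $\Delta(B^{**})=B^{**}$, so Definition \ref{2100} gives central projections $p_1 \in Z(B^{**})$ and $q_1 \in Z(A^{**})$ with $A \sim_{\sigma \Delta} Bp_1$ and $B \sim_{\sigma \Delta} Aq_1$. Next, I would pull $p_1$ back along the equivalence $B \sim_{\sigma \Delta} Aq_1$. By Theorem \ref{225} this equivalence amounts to a $*$-isomorphism $K_\infty(B) \cong K_\infty(Aq_1)$; passing to biduals produces an isomorphism $M_\infty(B^{**}) \cong M_\infty(A^{**}q_1)$, and comparing centers yields a bijection $Z(B^{**}) \leftrightarrow Z(A^{**})q_1$. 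Under this bijection $p_1$ corresponds to some $e_2 \in Z(A^{**})$ with $e_2 \leq q_1$, and cutting down by $e_2$ on both sides gives $Bp_1 \sim_{\sigma \Delta} Ae_2$. Transitivity (Theorem \ref{224}) then delivers $A \sim_{\sigma \Delta} Ae_2$.

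The degenerate cases $q_1 = id_{A^{**}}$ and $e_2 = q_1$ both force $A \sim_{\sigma \Delta} B$ outright, and the theorem is trivial with $r = id_{A^{**}}$ and $\hat r = id_{B^{**}}$. Otherwise $e_2 \neq q_1$ and $q_1 \neq id_{A^{**}}$, and Lemma \ref{60000} applies with $e_1 := q_1$: it produces central projections $p', q', r' \in A^{**}$ satisfying $id_{A^{**}} = p' + q'$, $q_1 = r' + q'$, $p' \perp q'$, $r' \perp q'$, and the decisive equivalence $Ap' \sim_{\sigma \Delta} Ar'$.

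Finally, I would transport the orthogonal decomposition $q_1 = r' + q'$ across $B \sim_{\sigma \Delta} Aq_1$ using the same center-bijection as before. This produces orthogonal central projections $\hat r, \hat s \in Z(B^{**})$ with $\hat r + \hat s = id_{B^{**}}$, $B\hat r \sim_{\sigma \Delta} Ar'$ and $B\hat s \sim_{\sigma \Delta} Aq'$. Setting the theorem's $r := p'$, transitivity combined with $Ap' \sim_{\sigma \Delta} Ar'$ gives $Ar \sim_{\sigma \Delta} B\hat r$, while $A(id_{A^{**}} - r) = Aq' \sim_{\sigma \Delta} B\hat s = B(id_{B^{**}} - \hat r)$, which is exactly the required conclusion. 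The substantive work is all concentrated in Lemma \ref{60000} (whose proof uses the $*$-isomorphism $\phi$ between the commutants coming from \cite{eletro} to manufacture the sequence of central projections $e_n$); the only real obstacle for the present argument is the center-correspondence used twice above, which hinges on Theorem \ref{225} together with the identification $Z(M_\infty(N)) = Z(N)$ for a $W^*$-algebra $N$.
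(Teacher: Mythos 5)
Your proposal is correct and follows essentially the same route as the paper: extract $e_2\le q_1$ with $A\sim_{\sigma\Delta}Ae_2$ by pulling the projection back through the bidual of the $*$-isomorphism $K_\infty(B)\cong K_\infty(Aq_1)$, apply Lemma \ref{60000}, and transport the resulting decomposition of $q_1$ back to $Z(B^{**})$ by the same center correspondence. The only differences are cosmetic (you keep the lemma's labeling of $p,q,r$ where the paper swaps it) plus your explicit treatment of the degenerate cases, which the paper omits.
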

\begin{proof}
There exist projections $e_1\in Z(A^{**}), f_1\in Z(B^{**})$ such that  
$$A\sim _{\sigma \Delta }Bf_1,\;\;B\sim _{\sigma \Delta }Ae_1.$$ 
Thus, there exists a projection $e_2\in  Z(A^{**})$ such that $e_2\leq e_1$ and $Bf_1\sim _{\sigma \Delta }Ae_2.$ 
Therefore, $A\sim _{\sigma \Delta }Ae_2.$ By Lemma \ref{60000}, there exist projections $p,q,r\in Z(A^{**})$ such that 
$$e_1=p\oplus q,\; e_0=r\oplus q,\; p\bot q,\; r\bot q$$ and $$Ap\sim _{\sigma \Delta }Ar.$$ 
Assume that $\psi : K_\infty (Ae_1)\rightarrow K_\infty (B)$ is a $*$-isomorphism. Again, by $\psi $ we denote
the second dual of $\psi .$ Because $p\leq e_1, $ there exists $\hat p \in Z(B^{**})$ such that 
$\psi (K_\infty (Ae_1)^{**}p^\infty )=K_\infty (B)^{**}\hat p^\infty .$ 
We have that $$\psi (K_\infty (Ap))=\psi (K_\infty (Ae_1)) \psi (p^\infty )=K_\infty (B\hat p).$$
Similarly, there exists a projection $\hat q\in Z(B^{**})$ such that  $$\psi (K_\infty (Aq))=
K_\infty (B\hat q).$$ Because $p\perp q$, we have that $\hat p \bot \hat q.$ 
Furthermore, because $e_1=p\oplus q\Rightarrow \hat e_0=\hat p\oplus \hat q,$ 
we conclude that $$Ar\sim _{\sigma \Delta }Ap\sim _{\sigma \Delta }B\hat p$$
and $$A(e_0-r)=Aq\sim _{\sigma \Delta }B\hat q=B(\hat e_0-\hat p).$$ 
We write $\hat r $ for $\hat p.$ 
The proof is now complete.
\end{proof} 

\section{Examples in the non-self-adjoint case}\label{non}

In this section, we will present a counterexample of two non-self-adjoint operator algebras $\hat A, \hat B$ 
such that $\hat A\subset _{\sigma \Delta }\hat B, \;\; \hat B\subset _{\sigma \Delta }\hat A$ but $\hat A$ and  $ \hat B$ 
are not $\sigma \Delta $-strongly equivalent. 

Let $\cl N, \cl M$ be nests acting on the separable Hilbert spaces $H_1$ and $K_1$, respectively. These nests are 
called similar if  there exists an invertible operator $s: H_1\rightarrow K_1$ such that $$\cl M=\{sn(H_1): n\in \cl N\}.$$ 
In this case, the map $$\theta _s: \cl N\rightarrow \cl M, \;\;\theta _s(n)=sn(H_1)$$ 
is a nest isomorphism. This means that $\theta _s$ is one-to-one, onto, and order-preserving. We can easily check that 
$\Alg {\cl M}=s\Alg {\cl N}s^{-1}.$ If $n\in \cl N,$ we write 
$$n_-=\vee \{l\in \cl N: l\leq n, l\neq n\}.$$
In the case where $n_-$ is strictly contained in $n,$ the projection $a=n-n_-$ is called an atom of $\cl N.$ 

\begin{theorem}\label{22100} \cite[13.20]{dav} The nests $\cl N, \cl M$ are similar if and only if there exists a nest isomorphism 
$\theta : \cl N\rightarrow \cl M$ such that $$  dim((n-n_-)(H_1)))  = dim((\theta (n)-\theta (n_-))(H_2)))  $$ 
for all $n\in \cl N.$
\end{theorem}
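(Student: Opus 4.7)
The plan is to handle the two implications separately.

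\textbf{Forward direction.} Given an invertible $s: H_1 \to K_1$ with $\cl M = \{s n(H_1): n \in \cl N\}$, I would take $\theta := \theta_s$ and verify it preserves atom dimensions. Fix $n \in \cl N$ with predecessor $n_-$. Since $s$ is a bounded linear bijection, it restricts to a linear bijection $n(H_1) \to sn(H_1) = \theta(n)(K_1)$, and similarly for $n_-$. Hence it induces a linear bijection between the quotient vector spaces $n(H_1)/n_-(H_1)$ and $\theta(n)(K_1)/\theta(n_-)(K_1)$. These quotients are canonically identifiable with the atom subspaces $(n-n_-)(H_1)$ and $(\theta(n)-\theta(n_-))(K_1)$, so their Hilbert-space dimensions agree.

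\textbf{Backward direction.} This is the substantive half. I would split each Hilbert space according to the atomic and continuous parts of its nest. Let $P_a$ denote the supremum of the atoms of $\cl N$ and $P_c = I_{H_1}-P_a$, and define $Q_a, Q_c$ similarly for $\cl M$. Atoms are order-theoretically distinguished elements of the lattice, so the nest isomorphism $\theta$ sends atoms of $\cl N$ to atoms of $\cl M$ and respects this decomposition. The hypothesis on dimensions then lets me pick, for each atom $n-n_-$, a unitary from $(n-n_-)(H_1)$ onto $(\theta(n)-\theta(n_-))(K_1)$; taking their orthogonal direct sum produces a unitary $s_a: P_a(H_1) \to Q_a(K_1)$ implementing the similarity on the atomic parts.

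For the continuous parts, I would invoke the Erdos--Ringrose structure theorem for continuous nests: after decomposing by a multiplicity function, a continuous nest is unitarily equivalent to a standard Volterra-type nest on $L^2([0,1],\mu) \otimes \cl H$ given by multiplication projections $M_{\chi_{[0,t]}}\otimes I$. Since $\theta$ restricted to the continuous parts is an order isomorphism preserving the multiplicity function (the continuous analogue of atom dimension, which is forced to match because $\theta$ already matches every atomic refinement), it induces a unitary $s_c: P_c(H_1)\to Q_c(K_1)$ intertwining the two continuous nests. Finally $s := s_a \oplus s_c$ gives the required invertible operator with $\cl M = \{sn(H_1): n \in \cl N\}$.

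\textbf{Main obstacle.} The hard part is the continuous case: one must ensure that $s_c$ implements exactly the prescribed order isomorphism $\theta$, not merely some unitary equivalence between the two continuous nests. This forces us to parameterize each continuous nest by a multiplicity/dimension function, verify that these functions agree on $\theta$-corresponding intervals, and glue the resulting measure-theoretic identifications into a single bounded operator. The atomic case is by comparison routine, since equal-dimension Hilbert spaces admit unitary identifications and the orthogonal direct sum construction automatically preserves the order structure.
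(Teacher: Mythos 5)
The paper offers no proof of this statement: it is quoted verbatim as Davidson's Similarity Theorem \cite[13.20]{dav}, so the comparison can only be between your sketch and the known proof of that theorem. Your forward direction is fine: $\theta_s$ preserves suprema, hence $\theta_s(n)_-=\theta_s(n_-)$, and a bounded linear bijection $n(H_1)\rightarrow \theta_s(n)(K_1)$ induces an isomorphism of the quotients $n(H_1)/n_-(H_1)\cong (n-n_-)(H_1)$, which preserves Hilbert space dimension.

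The backward direction, however, contains a genuine gap at exactly the point where the theorem is deep. You propose to build a \emph{unitary} $s_c$ on the continuous parts that implements the restriction of $\theta$, arguing that $\theta$ ``preserves the multiplicity function \ldots which is forced to match because $\theta$ already matches every atomic refinement.'' On the continuous part there are no atoms, so the hypothesis gives no information there, and a dimension-preserving order isomorphism between continuous nests is in general \emph{not} unitarily implementable: unitary implementation forces the corresponding spectral measures to be mutually absolutely continuous with equal multiplicity functions, and this fails already for the Volterra nest $\{L^2[0,t]\}_{t\in[0,1]}$ and the order isomorphism induced by a homeomorphism of $[0,1]$ carrying a fat Cantor set onto a null set. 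This is precisely why the theorem asserts only an invertible $s$ (which Davidson shows can be taken of the form unitary plus small compact, as used in Lemma \ref{22200} of the paper), and why its proof cannot proceed by an orthogonal direct sum of unitaries: it requires Andersen's theorem that any two continuous nests are approximately unitarily equivalent, together with a nontrivial absorption/limiting argument to convert approximate unitary equivalence into an exact similarity implementing the prescribed $\theta$. Your atomic construction $s_a$ is correct, but the continuous half of your argument proves a strictly stronger (and false) statement, so the proposal does not establish the theorem.
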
 

The lemmas \ref{22200} and \ref{22300} can be inferred from Section 13 in \cite{dav}. We present the proofs here for completeness.

\begin{lemma}\label{22200} Let $\cl N, \cl M$ be separably-acting nests, and $\theta : \cl N\rightarrow \cl M$ 
be a nest isomorphism preserving the dimensions of the atoms. For every $0<\epsilon <1$, there exists an invertible 
operator $s,$ a unitary $u$, and a compact operator $k$ such that 
$$s=u+k, \;\;\|k\|<\epsilon , \;\;\|s^{-1}\|<1+\epsilon $$ and $\theta =\theta_s. $
\end{lemma}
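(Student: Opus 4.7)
The plan is to construct $s$ in the form $u+k$ by a limiting procedure over finite subnests, using the dimension-matching hypothesis to build the unitary part and then absorbing the remaining discrepancy into a small compact correction inside $\Alg{\cl N}$.

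First, I would reduce to the "continuous" case by splitting off the atomic parts. Let $p_a \in \cl N$ be the supremum of the atomic intervals of $\cl N$, and $q_a \in \cl M$ the analogue for $\cl M$. The hypothesis that $\theta$ preserves dimensions of atoms guarantees that $\theta$ carries this splitting over: on the range of $p_a$, a matching similarity is immediate, since one simply chooses unitaries between the ranges of each pair $(a, \theta(a))$ of atoms of $\cl N$ and $\cl M$ and takes their direct sum. So the essential case is that of two continuous nests with the same ordered-dimension data.

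Second, for the continuous part I would use separability of $H_1$ to fix an increasing sequence of finite subnests $\cl N_1 \subseteq \cl N_2 \subseteq \cdots$ of $\cl N$ whose union is order-dense in $\cl N$, refining rapidly so that the "meshes" (the largest consecutive differences measured via a fixed spectral resolution) shrink geometrically. Since $\theta$ is an order isomorphism, the finite subnests $\cl M_k = \theta(\cl N_k)$ have atoms matched in dimension with those of $\cl N_k$ (the continuity of the ambient nest forces each refined interval to have matching dimension under $\theta$). Hence one can choose unitaries $u_k : H_1 \to K_1$ implementing $\theta$ on $\cl N_k$ exactly.

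Third, the refinements should be arranged so that $u_{k+1}$ agrees with $u_k$ off the atoms of $\cl N_k$ that get subdivided in $\cl N_{k+1}$, and the geometric decay of mesh size ensures that $\{u_k\}$ is Cauchy in norm, converging to a unitary $u$ with $\|u - u_k\| < \eps / 2$ eventually. By construction $u$ implements $\theta$ on the dense subnest $\bigcup_k \cl N_k$, hence up to a controllably small compact perturbation on all of $\cl N$. Finally, to promote approximate intertwining to exact intertwining, I would invoke the nest-algebra structure: the discrepancy between $u \cl N u^*$ and $\cl M$ can be absorbed by a factor of the form $I + k'$ with $k' \in \Alg{\cl N}$ compact and $\|k'\| < \eps / 2$, setting $s = u(I + k') = u + uk'$. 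A Neumann series then gives $\|s^{-1}\| \leq (1 - \|k'\|)^{-1} < 1 + \eps$ after adjusting the initial choice of $\eps$.

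The main obstacle is the third step: arranging the sequence $u_k$ of sub-nest unitaries to converge in \emph{norm} rather than merely strongly, and simultaneously producing the exact compact correction with small norm. This requires a careful diagonalization where, at each refinement stage, the unitary adjustments on the newly subdivided atoms are summable in norm; the spatial structure of nest algebras and a Schmidt-type estimate for the finite-rank correction on each stage are what make this possible.
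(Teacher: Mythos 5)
Your proposal sets out to construct the similarity $s=u+k$ from scratch, but the statement you are proving is, apart from the bound on $\|s^{-1}\|$, exactly the Similarity Theorem (Theorem 13.20 of Davidson's \emph{Nest Algebras}, quoted in the paper as Theorem \ref{22100}), whose proof in that book already produces an implementing operator of the form $s=u+k$ with $u$ unitary, $k$ compact, and $\|k\|$ as small as one likes. The paper's entire proof consists of invoking that theorem with $\|k\|<\epsilon/(1+\epsilon)$ and then running a Neumann series: $u^{*}s=I+u^{*}k$, so $s^{-1}=\sum_{n\ge 0}(-u^{*}k)^{n}u^{*}$ and $\|s^{-1}\|\le (1-\|k\|)^{-1}<1+\epsilon$. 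Your final estimate reproduces this computation correctly, so that part of your argument is fine.

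The genuine gap is in your second and third steps, where you attempt to reprove the Similarity Theorem itself. For a continuous nest every interval of a finite subnest is infinite-dimensional, so you can indeed choose unitaries $u_k$ implementing $\theta$ exactly on $\cl N_k$; but in passing from $\cl N_k$ to $\cl N_{k+1}$ every interval of $\cl N_k$ is subdivided, and the unitary adjustment required inside each (infinite-dimensional) interval is an arbitrary unitary of that subspace, whose distance from the identity can be as large as $2$ regardless of how fine the mesh is in the spectral sense. Nothing forces $(u_k)$ to be Cauchy in norm, and in general it cannot be: a norm limit would implement $\theta$ \emph{unitarily} on an order-dense subnest, hence on all of $\cl N$ by order continuity, whereas nests satisfying the dimension condition need not be unitarily equivalent (that is precisely why the conclusion involves a similarity rather than a unitary). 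The correct route --- Andersen's theorem producing unitaries with $\sup_{n\in\cl N}\|u_m n u_m^{*}-\theta(n)\|\to 0$, followed by the machinery that upgrades approximate unitary equivalence to an exact similarity implemented by a unitary plus a small-norm compact --- is the content of an entire chapter of Davidson's book and is not supplied by your sketch. You flagged the norm convergence as ``the main obstacle''; it is in fact the whole theorem, and the paper sidesteps it entirely by citation.
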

\begin{proof} By Theorem \cite[13.20]{dav}, there exists a compact operator $k,$ a unitary $u$, an invertible 
operator $s=u+k,$ such that  $\theta =\theta_s $ and $\|k\|<\frac{\epsilon }{1+\epsilon }.$ Observe that $\|k\|<\epsilon .$ 
We have that $u^*s=I+u^*k\Rightarrow \|I-u^*s\|<\epsilon .$ 
Therefore, 
\begin{equation}\label{star} (u^*s)^{-1}= \sum_{n=0}^\infty  (I-u^*s)^n= \sum_{n=0}^\infty  (-u^*k)^n .
\end{equation}
We conclude that $$s^{-1}=\sum_{n=0}^\infty  (-u^*k)^n u^*.$$
We have that $$\|s^{-1}\|\leq \sum_{n=0}^\infty  \|u^*k\|^n=\sum_{n=0}^\infty \|k\|^n=\frac{1}{1-\|k\|} <1+\epsilon .  $$ 

\end{proof}

\begin{lemma}\label{22300} Let $\cl N, \cl M$ be separably-acting nests and $\theta : \cl N\rightarrow \cl M$ 
be a nest isomorphism preserving the dimensions of the atoms. For every $0<\epsilon <1$, there exists an invertible 
operator $s,$ a unitary $u,$ and  compact operators $k,l$ such that 
$$s=u+k, \;\;s^{-1}=u^*+l, \;\;\|k\|<\epsilon , \;\;\|l\|<\epsilon $$ and $\theta =\theta_s. $
\end{lemma}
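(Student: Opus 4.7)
The strategy is to apply Lemma \ref{22200} with a carefully chosen smaller tolerance $\epsilon'$ in place of $\epsilon$, and then extract the desired compact operator $l=s^{-1}-u^*$ from the Neumann series already produced in the proof of that lemma. Concretely, I would pick $\epsilon' = \epsilon/(1+\epsilon)$, so $\epsilon' < \epsilon$ and $\epsilon'/(1-\epsilon') = \epsilon$.

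Applying Lemma \ref{22200} with this $\epsilon'$ produces an invertible $s$, a unitary $u$, and a compact operator $k$ with $s = u + k$, $\|k\| < \epsilon' < \epsilon$, and $\theta = \theta_s$. The key observation is that formula (\ref{star}) in the proof of Lemma \ref{22200} gives the explicit series
\begin{equation*}
s^{-1} = \sum_{n=0}^{\infty} (-u^*k)^n u^* = u^* + \sum_{n=1}^{\infty} (-u^*k)^n u^*,
\end{equation*}
so it is natural to define $l := s^{-1} - u^* = \sum_{n=1}^{\infty}(-u^*k)^n u^*$. Since each $(-u^*k)^n u^*$ is compact (as $k$ is compact and the ideal of compact operators is norm-closed and two-sided) and the series converges in norm because $\|u^*k\| \leq \|k\| < 1$, we conclude that $l$ is compact.

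It remains to estimate $\|l\|$:
\begin{equation*}
\|l\| \leq \sum_{n=1}^{\infty} \|u^*k\|^n \leq \sum_{n=1}^{\infty} \|k\|^n = \frac{\|k\|}{1-\|k\|} < \frac{\epsilon'}{1-\epsilon'} = \epsilon,
\end{equation*}
where the last equality uses our choice $\epsilon' = \epsilon/(1+\epsilon)$. Thus $\|k\| < \epsilon$ and $\|l\| < \epsilon$ simultaneously, completing the proof.

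I do not expect any serious obstacle here: the statement is essentially a quantitative refinement of Lemma \ref{22200}, and all that is really required is to track the Neumann series expansion of $s^{-1}$ already present in that proof and tune the input tolerance. The only mildly delicate point is making sure the geometric series bound on $\|l\|$ is also below $\epsilon$, which forces the particular choice $\epsilon' = \epsilon/(1+\epsilon)$.
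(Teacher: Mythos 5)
Your proposal is correct and follows essentially the same route as the paper: both apply Lemma \ref{22200} with a suitably shrunken tolerance and then identify $l=s^{-1}-u^*$ as a compact operator via the Neumann series (\ref{star}), the only cosmetic difference being that the paper bounds $\|l\|=\|s^{-1}k\|\leq\|s^{-1}\|\,\|k\|<(1+\delta)\delta$ using the estimate $\|s^{-1}\|<1+\delta$ from Lemma \ref{22200}, while you bound the series tail by $\|k\|/(1-\|k\|)$ — equivalent estimates under equivalent choices of the auxiliary parameter.
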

\begin{proof} Choose $0<\delta <1$ such that $(1+\delta )\delta <\epsilon ,\;\; \delta <\epsilon .$ 
By Lemma \ref{22200}, there exist a unitary $u$ and compact $k$ such that $s=u+k$ is an invertible operator and
$\|k\|<\delta , \;\|s^{-1}\|<1+\delta , \;\theta =\theta_s. $ 
Define $l_0=-u^*ks^{-1}u.$ We have that
$$l_0u^*s=-u^*k\Rightarrow l_0(I+u^*k)=-u^*k\Rightarrow I=I+u^*k+l_0(I+u^*k)\Rightarrow$$$$ I=(I+l_0)(I+u^*k).$$ 
Because $\|u^*k\|<\delta <1,$ the operator $I+u^*k$ is invertible, and thus $$I+l_0=(I-(-u^*k))^{-1}=
\sum_{n=0}^\infty (-u^*k)^n.$$ By (\ref{star}), we have that
$$I+l_0=s^{-1}u\Rightarrow s^{-1}=u^*+l_0u^*.$$ If $l=l_0u^*,$ then $l$ is a compact operator, 
and \begin{align*} \|l\|=&\|l_0u^*\|=\|s^{-1}-u^*\|=\|s^{-1}u-I\|=\|s^{-1}(u-s)\|=\\& \|s^{-1}k\|\leq \|s^{-1}\|\|k\|<(1+\delta )\delta <
\epsilon .
\end{align*} Thus, $s^{-1}=u^*+l$ and $\|l\|<\epsilon.$ 
\end{proof}

In the following, we fix similar nests $\cl N$ and $\cl M$ acting on the Hilbert spaces $H_1$ and $H_2$, respectively, and a nest 
isomorphism $\theta : \cl N\rightarrow \cl M$ preserving the dimensions of atoms. 
Suppose that $a_i=n_i-(n_i)_-, \;\;b_i=\theta (n_i)-\theta (n_i)_-, i=1,2,3,...$ 
are the atoms of $\cl N$ and $\cl M$, respectively. We also assume that $p=\vee _ia_i$, $p$ is strictly 
contained in $I_{H_1}, I_{H_2}=\vee _ib_i$, and $dim(a_i)=dim(b_i)<+\infty $ for all $i.$ 
By Lemma \ref{22300}, there exists a sequence of invertible operators $(s_n)_n $ such that $\theta =\theta_{s_n}, $ 
 a sequence of unitary $(u_n)_n$, and sequences of compact operators $(k_n)_n, (l_n)_n$ such that 
$$s_n=u_n+k_n,\;\;s_n^{-1}=u_n^*+l_n$$ for all $n\in \bb N $ and $\|k_n\|\rightarrow 0, \|l_n\|\rightarrow 0.$ 
We can also assume that $\|s_n\|<2, \;\;\|s^{-1}_n\|<2 $ for all $n \in \bb N,$  and 
$$ w^*-\lim_n u_n=s , \;\; w^*-\lim_n s_n=s,\;\; w^*-\lim_n s_n^{-1}=s^*.$$

\begin{lemma}\label{22400} (i) $$SOT-\lim_n  \sum_{i=1}^\infty b_is_na_i = \sum_{i=1}^\infty b_isa_i =s_0.$$
(ii) $$SOT-\lim_n  \sum_{i=1}^\infty a_is_n^{-1}b_i = \sum_{i=1}^\infty a_is^*b_i =s_0^*.$$
\end{lemma}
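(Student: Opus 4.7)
The plan is to show that each indicated series defines a bounded operator on its respective Hilbert space, and then to establish SOT convergence by a head-tail $\varepsilon$ argument. The head will be handled by the finite-dimensionality of the atoms (which upgrades weak convergence to norm convergence on their ranges), and the tail by a uniform bound coming from the orthogonality of the $b_i$ (respectively $a_i$) together with $\sum_i a_i = p$ and $\sum_i b_i = I_{H_2}$.

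For (i), first note that for any bounded operator $x\colon H_1\to H_2$ and any $h\in H_1$, the vectors $b_ixa_ih$ lie in the mutually orthogonal subspaces $b_i(H_2)$, so
\[
\sum_i \|b_i x a_i h\|^2 \leq \|x\|^2\sum_i \|a_i h\|^2 = \|x\|^2\|ph\|^2 \leq \|x\|^2\|h\|^2.
\]
Hence $T_nh:=\sum_i b_i s_n a_i h$ and $Th:=\sum_i b_i s a_i h$ converge in norm and define bounded operators $T_n,T$ with $\|T_n\|\leq\|s_n\|\leq 2$ and $\|T\|\leq \|s\|\leq 1$. In particular $s_0=T$ is well defined.

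To prove $T_n\to T$ in SOT, fix $h\in H_1$ and $\varepsilon>0$. By orthogonality of the $b_i$,
\[
\|(T_n-T)h\|^2 = \sum_i \|b_i(s_n-s)a_ih\|^2.
\]
Since $\|s_n-s\|\leq 3$ uniformly in $n$ and $\sum_i\|a_ih\|^2\leq \|h\|^2<\infty$, I can choose $N$, independent of $n$, such that
\[
\sum_{i>N}\|b_i(s_n-s)a_ih\|^2 \leq 9\sum_{i>N}\|a_ih\|^2 < \varepsilon^2/2.
\]
For the remaining finite head, decompose $s_n-s=(u_n-s)+k_n$. Since $\|k_n\|\to 0$, we have $\|b_ik_na_ih\|\to 0$. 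For the unitary piece, $u_n\to s$ in the weak-$^*$ topology implies $\langle b_i(u_n-s)a_ih,w\rangle=\langle (u_n-s)a_ih,b_iw\rangle\to 0$ for every $w\in H_2$, so $b_i(u_n-s)a_ih\to 0$ weakly in $b_i(H_2)$; since $\dim b_i(H_2)<\infty$, this weak convergence is norm convergence. Summing the finitely many head terms, for $n$ large the head is below $\varepsilon^2/2$, hence $\|(T_n-T)h\|<\varepsilon$.

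Part (ii) is proved by the completely analogous argument with the roles of $a_i$ and $b_i$ exchanged: one uses $s_n^{-1}=u_n^*+l_n$ with $\|l_n\|\to 0$, the convergence $u_n^*\to s^*$ in WOT (which follows from $u_n\to s$ in WOT together with unitarity of $u_n$), the identity $\sum_i b_i=I_{H_2}$ (so that $\sum_i\|b_ih\|^2=\|h\|^2$ for $h\in H_2$ controls the tail), and the finite-dimensionality of each $a_i(H_1)$ to upgrade weak convergence of $a_i(u_n^*-s^*)b_ih$ to norm convergence.

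The main technical point — and the only real obstacle — is upgrading WOT convergence of $u_n$ to $s$ into \emph{norm} convergence of $b_i(u_n-s)a_ih$ (and its analogue in (ii)); this succeeds precisely because the atom subspaces $b_i(H_2)$ and $a_i(H_1)$ are finite-dimensional. Everything else is routine orthogonality and uniform bound bookkeeping.
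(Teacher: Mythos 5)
Your proof is correct and follows essentially the same route as the paper: the orthogonality of the atoms gives $\|(T_n-T)h\|^2=\sum_i\|b_i(s_n-s)a_ih\|^2$, the tail is controlled uniformly in $n$ by $\sum_{i>N}\|a_ih\|^2$, and each head term converges because the finite rank of $b_i$ (equivalently, finite-dimensionality of the atom ranges) upgrades the WOT convergence $s_n\to s$ to norm convergence of $b_is_na_ih$. The only cosmetic difference is that you split $s_n=u_n+k_n$ before invoking weak convergence, whereas the paper applies the WOT convergence of $s_n$ to $s$ directly to the finite-rank expression for $b_i$; both are valid.
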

\begin{proof} We shall prove (i), while statement (ii) follows by symmetry. Fix $i\in \bb N$, 
and assume that $$b_i(\xi )= \sum_{j=1}^n\sca{\xi , x_j}y_j ,\;\;\forall \;\xi \;\in \;H_1$$ for $x_j, y_j\in H_1.$
For all $\xi \in H_1$, we have that
$$b_is_na_i(\xi )= \sum_{j=1}^n\sca{s_na_i(\xi) , x_j}y_j  \rightarrow \sum_{j=1}^n\sca{sa_i(\xi) , x_j}y_j =b_isa_i(\xi ). $$
Thus, $$SOT-\lim_n b_is_na_i=b_isa_i, \;\;\;\forall \;i.$$
If $\xi \in H_1$  for all $k\in \bb N,$ we have that
\begin{align*}&  \nor{  \sum_{i=1}^\infty b_is_na_i(\xi )  - \sum_{i=1}^\infty b_isa_i(\xi )}^2 =
  \sum_{i=1}^\infty \nor{b_is_na_i(\xi )  -  b_isa_i(\xi )}^2  =\\& 
\sum_{i=1}^k \nor{b_is_na_i(\xi )  -  b_isa_i(\xi )}^2 + \sum_{i>k}^\infty \nor{b_is_na_i(\xi ) 
 -  b_isa_i(\xi )}^2 \leq \\ & 
 \sum_{i=1}^k \nor{b_is_na_i(\xi )  - b_isa_i(\xi )}^2 + 2 \sum_{i>k}^\infty \nor{a_i(\xi )}^2 .
\end{align*}
Fix $\epsilon >0.$ Then, there exists $k_0\in \bb N$ such that $\sum_{i>k_0}^\infty \nor{a_i(\xi )}^2 <\epsilon .$
Thus, $$\nor{  \sum_{i=1}^\infty b_is_na_i(\xi )  - \sum_{i=1}^\infty b_isa_i(\xi )}^2\leq 
 \sum_{i=1}^{k_0} \nor{b_is_na_i(\xi )  - b_isa_i(\xi )}^2 + 2 \epsilon ,\;\;\forall \;n\;\in \;\bb N.$$
We let $n\rightarrow \infty ,$ and we have that
 $$\limsup_n \nor{  \sum_{i=1}^\infty b_is_na_i(\xi )  - \sum_{i=1}^\infty b_isa_i(\xi )}^2\leq 0+2\epsilon =2\epsilon . $$
Thus, $$\lim_n \nor{  \sum_{i=1}^\infty b_is_na_i(\xi )  - \sum_{i=1}^\infty b_isa_i(\xi )}=0.$$ 
\end{proof}

\begin{lemma}\label{22500} For every $j, i\in \bb N$, we have that
$$a_is_j^{-1}b_is_ja_i=a_i=a_is^*b_isa_i.$$
\end{lemma}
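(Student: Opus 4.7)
The plan is to prove the finite-$j$ identity first and then obtain the second equality by passing to the weak-$*$ limit, exploiting the finite dimensionality of the atoms $a_i, b_i$.

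For the first equality, the key fact is that $\theta=\theta_{s_j}$ gives $s_j n_i(H_1)=\theta(n_i)(H_2)$ and $s_j(n_i)_-(H_1)=\theta(n_i)_-(H_2)$. I would expand $b_i=\theta(n_i)-\theta(n_i)_-$ and handle the two pieces separately. For the $\theta(n_i)$-piece, since $a_i\leq n_i$ we have $s_j a_i=s_j n_i a_i$, and the range of $s_j n_i$ lies in $\theta(n_i)(H_2)$, so $\theta(n_i)s_j a_i=s_j a_i$; this gives $a_i s_j^{-1}\theta(n_i)s_j a_i=a_i s_j^{-1}s_j a_i=a_i$. For the $\theta(n_i)_-$-piece, the range of $s_j^{-1}\theta(n_i)_-$ sits in $s_j^{-1}\theta((n_i)_-)(H_2)=(n_i)_-(H_1)$, hence $s_j^{-1}\theta(n_i)_-=(n_i)_- s_j^{-1}\theta(n_i)_-$, so $a_i s_j^{-1}\theta(n_i)_-=a_i(n_i)_- s_j^{-1}\theta(n_i)_-=0$ because $a_i(n_i)_-=0$. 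Combining the two pieces yields $a_i s_j^{-1}b_i s_j a_i=a_i$ for every $j$.

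For the second equality, I would pass to the limit $j\to\infty$. Factor $a_i s_j^{-1}b_i s_j a_i=(a_i s_j^{-1}b_i)(b_i s_j a_i)$ using $b_i^2=b_i$. Since $s_j\to s$ and $s_j^{-1}\to s^*$ in the weak-$*$ (hence WOT) topology, for any orthonormal bases of the finite-dimensional spaces $a_i(H_1)$ and $b_i(H_2)$ the matrix entries $\langle s_j^{\pm} x, y\rangle$ converge to the corresponding entries of $s^{(*)}$. Because $b_i s_j a_i$ and $a_i s_j^{-1} b_i$ are supported and valued in these finite-dimensional atoms, entrywise convergence is equivalent to operator norm convergence, and so
\begin{equation*}
b_i s_j a_i\xrightarrow{\|\cdot\|} b_i s a_i,\qquad a_i s_j^{-1} b_i\xrightarrow{\|\cdot\|} a_i s^* b_i.
\end{equation*}
Multiplying these norm limits and using $b_i^2=b_i$ gives $a_i s_j^{-1}b_i s_j a_i\to a_i s^* b_i s a_i$. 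Since the left-hand side equals $a_i$ for every $j$, the limit is $a_i$, so $a_i=a_i s^* b_i s a_i$.

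The only subtlety is justifying the passage to the limit in the product, which is not automatic since weak-$*$ limits do not commute with multiplication. Compressing by the finite-rank projections $a_i,b_i$ on both sides is what rescues this, reducing the question to a convergence of matrices on finite-dimensional spaces where all reasonable topologies agree; this is the step where the finite-dimensionality assumption on the atoms is essential.
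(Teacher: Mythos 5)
Your proof is correct and follows essentially the same route as the paper: the first identity rests on the facts that $s_j$ carries $n_i(H_1)$ onto $\theta(n_i)(H_2)$ and $(n_i)_-(H_1)$ onto $\theta(n_i)_-(H_2)$ together with $a_i(n_i)_-=0$ (the paper chases vectors where you use the equivalent projection identities), and the second identity is obtained by letting $j\to\infty$. Your justification of the limit of the product via norm convergence of the finite-rank compressions $b_is_ja_i$ and $a_is_j^{-1}b_i$ is a clean, slightly more explicit version of the paper's appeal to SOT convergence of the two factors.
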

\begin{proof}  Because $$  s_j(n_i(H_1))=\theta (n_i)(H_2)  , \;\;\;s_j((n_i)_-(H_1))=\theta ((n_i)_-)(H_2) , $$
if $\xi \in a_i(H_1)$, then $\xi=n_i( \xi)-(n_i)_-( \xi). $ Thus, there exist $\xi_j, \; \omega_j\;\in H_2, $ such that 
\begin{align*} s_j(\xi )= & \theta (n_i) (\xi _j)-\theta (n_i) _-(\omega _j) =(\theta (n_i)-\theta (n_i)_-)(\xi _j)+
(\theta (n_i)_- (\xi _j)-\theta (n_i) _-(\omega _j) )=\\& b_i(\xi _j)+\theta (n_i)_-(\xi_j- \omega _j).
\end{align*}
Because $b_i=\theta(n_i)- \theta(n_i)_-, $ we have that $b_is_j(\xi )=b_i(\xi _j).$ Therefore,
$$s_j^{-1}b_is_j(\xi )=s_j^{-1}(b_i(\xi _j))=s_j^{-1}(s_j(\xi )-\theta (n_i)_-(\xi_j- \omega _j))=
\xi - s_j^{-1}(\theta (n_i)_-(\xi _j-\omega _j)) .$$
However, $s_j^{-1}(\theta (n_i)_-(H_2))=(n_i)_-(H_1).$ Thus, there exists $\phi _j\;\in \;H_1,$ such that 
$$ s_j^{-1}(\theta (n_i)_-(\xi _j-\omega _j))=  (n_i)_-(\phi _j)$$ 
We have proved that $$s_j^{-1}b_is_j(\xi )=\xi -(n_i)_-(\phi _j),$$ which implies that
$$a_is_j^{-1}b_is_j(\xi )=a_i(\xi )-a_i(n_i)_-(\phi _j).$$ 
Because $a_i(n_i)_-=0$, we have that
$$a_is_j^{-1}b_is_j(\xi )=a_i(\xi ), \;\;\forall \;i,j.$$ Because 
$$  SOT-\lim_ja_is_j^{-1}b_i=a_is^*b_i,  \;\;\; SOT-\lim_jb_is_ja_i=b_is^*a_i,$$ we obtain 
 $$a_is^*b_isa_i= a_i, \;\;\forall \;i.$$

\end{proof}

\begin{lemma}\label{22600}  Let $s_0$ be as in Lemma \ref{22400}. Then, $$s_0^*s_0=p, \;\;s_0s_0^*=I_{H_2}.$$
\end{lemma}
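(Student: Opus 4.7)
The plan is to compute $s_0^* s_0$ and $s_0 s_0^*$ directly from the series $s_0=\sum_i b_i s a_i$ and $s_0^*=\sum_i a_i s^* b_i$ given by Lemma \ref{22400}, by pairing with inner products and exploiting the orthogonality of the atoms together with Lemma \ref{22500}. Under the standing assumptions, $\{a_i\}$ and $\{b_i\}$ are each families of pairwise orthogonal finite-rank projections with $\sum_i a_i = p$ and $\sum_i b_i = I_{H_2}$ in SOT, and $\dim(a_i)=\dim(b_i)<\infty$ for each $i$.

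For $s_0^*s_0$, I would write, for $\xi,\eta\in H_1$,
$$\langle s_0^* s_0 \xi,\eta\rangle=\langle s_0\xi, s_0\eta\rangle = \sum_{i,j}\langle b_i s a_i\xi,\; b_j s a_j\eta\rangle,$$
kill the off-diagonal terms via $b_j b_i=\delta_{ij}b_i$, and apply Lemma \ref{22500} to each diagonal term to get
$$\sum_i\langle a_is^* b_is a_i\xi,\eta\rangle = \sum_i\langle a_i\xi,\eta\rangle=\langle p\xi,\eta\rangle.$$
Hence $s_0^*s_0=p$.

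The identity $s_0 s_0^*=I_{H_2}$ rests on the companion fact $b_i s a_i s^* b_i=b_i$. The cleanest route is to set $v_i:=b_i s a_i$ and observe that Lemma \ref{22500} already gives $v_i^* v_i = a_i s^* b_i s a_i = a_i$, so $v_i$ is a partial isometry with initial projection $a_i$. The final projection $v_i v_i^*$ then has range contained in $b_i(H_2)$ and rank equal to $\mathrm{rk}(a_i)=\dim(a_i)=\dim(b_i)=\mathrm{rk}(b_i)<\infty$, forcing $v_iv_i^*=b_i$. Expanding
$$\langle s_0 s_0^*\xi,\eta\rangle=\sum_{i,j}\langle a_i s^* b_i\xi,\; a_j s^* b_j\eta\rangle,$$
using $a_i a_j=\delta_{ij}a_i$ to eliminate cross terms, and invoking $b_i s a_i s^* b_i=b_i$ for the diagonal, yields $\sum_i\langle b_i\xi,\eta\rangle=\langle I_{H_2}\xi,\eta\rangle$.

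There is no serious obstacle, but one should be careful not to manipulate the SOT-convergent series as if they converged in norm; passing to inner products first converts the calculation into a genuine scalar-valued sum, where orthogonality of the atoms legitimizes the term-by-term simplification. The only real piece of cleverness is the partial-isometry shortcut that upgrades Lemma \ref{22500} to its dual form $b_i s a_i s^* b_i=b_i$ without repeating the nest-geometry argument; alternatively, one could redo the proof of Lemma \ref{22500} verbatim with the roles of $(a_i,H_1,s_j)$ and $(b_i,H_2,s_j^{-1})$ interchanged and then pass to the limit as in Lemma \ref{22400}.
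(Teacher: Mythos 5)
Your proof is correct, and for the identity $s_0^*s_0=p$ it is essentially the paper's argument: multiply the partial sums, use $b_ib_j=\delta_{ij}b_i$ to kill cross terms, and apply Lemma \ref{22500} to each diagonal term $a_is^*b_isa_i=a_i$; phrasing it through inner products rather than SOT-limits of bounded products is only a cosmetic difference. Where you diverge is the second identity. The paper disposes of $s_0s_0^*=I_{H_2}$ with the single phrase ``follows by symmetry,'' which implicitly means rerunning the nest-geometry computation of Lemma \ref{22500} with the roles of $(\cl N, a_i, s_j)$ and $(\cl M, b_i, s_j^{-1})$ interchanged to get $b_is_ja_is_j^{-1}b_i=b_i$ and then passing to the SOT limit --- exactly your second, ``alternatively'' route. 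Your primary route is genuinely different and arguably cleaner: from the already-proved $v_i^*v_i=a_i$ for $v_i=b_isa_i$ you read off that $v_i$ is a partial isometry, note $v_iv_i^*\leq b_i$ (its range lies in $b_i(H_2)$), and use the standing hypothesis $\dim(a_i)=\dim(b_i)<\infty$ to force $v_iv_i^*=b_i$ by a rank count. This buys you the companion identity without revisiting the geometry of the nests at all, at the modest cost of invoking the finite-dimensionality of the atoms, which the symmetry argument does not need (though it is assumed in this setting anyway). Both arguments are valid; your write-up is also more careful than the paper about why the term-by-term manipulation of the SOT-convergent series is legitimate.
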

\begin{proof} We shall prove that $s_0^*s_0=p.$ Because the span of the atoms of $\cl M$ 
is $I_{H_2},$  the other equality follows from symmetry. By Lemma \ref{22400}, we have that
 $$s_0=SOT-\lim_n  \sum_{i=1}^n b_isa_i, \;\;\;s_0^*=SOT-\lim_n  \sum_{i=1}^n a_is^*b_i.$$
Thus, $$s_0^*s_0= SOT-\lim_n( \sum_{i=1}^n a_is^*b_i )(\sum_{j=1}^nb_jsa_j )=
SOT-\lim_n \sum_{i=1}^n a_is^*b_isa_i.$$
Because $p=\vee _ia_i,$ Lemma \ref{22500} implies that $s_0^*s_0=p.$  
\end{proof}

Suppose that $A$ (resp. $B$) is the subalgebra of compact operators of the algebra $\Alg{\cl N}$ (resp. $\Alg{\cl M}$ ). It is well-known that 
$\Alg{\cl N}=A^{**}, \Alg{\cl M}=B^{**}.$  We define a map $\rho : B\rightarrow A$ such that  
$$\rho (k)=s_0^*ks_0,\;\;\forall \; k\in B.$$ Because $s_0s_0^*=I_{H_2}$, this map is a
homomorphism. If $k\in A$,  then 
$$pkp=s_0^*s_0ks_0^*s_0=\rho (s_0ks_0^*).$$ Thus $\rho (B)=pAp.$ Because $p\in \Delta (A^{**})^\prime =Z(\Delta (A^{**}))$, 
we have that $B\subset _{\sigma \Delta }A.$ 

In the following, we additionally assume that the dimensions of the atoms of $\cl N$ and $\cl M$ are one and that $\Delta (A^{**}), \;\Delta (B^{**})$
 are  maximal abelian self-adjoint algebras (MASAs). Such nests exist, see, for instance, Example 13.15 in \cite{dav}. We denote the 
algebras 
$$\hat B=B\oplus A\oplus A\oplus \ldots, \;\;\hat A=A\oplus A\oplus \ldots.$$
Because $B\subset _{\sigma \Delta }A$, we have that $\hat B\subset_{\sigma \Delta }\hat A.$
Furthermore, $$\hat A\cong (0\oplus \bb C I_{H_1} \oplus \bb C I_{H_1} \oplus \ldots )  \hat B 
(0\oplus \bb C I_{H_1} \oplus \bb C I_{H_1} \oplus \ldots )  .$$ Thus,  $\hat A\subset_{\sigma \Delta }\hat B.$
If $\subset _{\sigma \Delta }$ was a partial-order relation for non-self-adjoint algebras, then up to 
stable isomorphism we should have that $\hat A\sim _{\sigma \Delta }\hat B.$ 
Thus, the algebras $$\Omega =B^{**}\oplus A^{**}\oplus A^{**}\oplus \ldots, \;\;\;\Xi =A^{**}\oplus A^{**}\oplus \ldots$$
would be weakly stably isomorphic. Because $\Omega$ and $ \Xi $ are CSL algebras 
(see the definition of a CSL algebra in \cite{dav}), it follows from Theorem 3.2 in \cite{eleref} and 
Theorem 3.3 in \cite{eletro} that there would exists a $*$-isomorphism 
$$\theta : \Delta( \Omega)^\prime \rightarrow  \Delta (\Xi)^\prime  $$ such that 
$\theta (\Lat{\Omega })=\Lat{\Xi }.$ However, $\Delta( \Omega)$ and $\Delta (\Xi)$ are MASAs, and thus there exists a unitary $u$ 
such that $$\theta (x)=u^*xu, \;\;\;\forall \;x\;\in \Delta (\Omega ).$$ Therefore, $$u^*\Omega u=\Xi .$$
There exist completely contractive homomorphisms $\rho _k: B^{**}\rightarrow A^{**}, \;\;k=1,2,...$ 
such that 
$$u^*(x\oplus 0\oplus ...)u= \rho_1(x)\oplus  \rho _2(x)\oplus ...,\;\;\;\forall \; x\; \in B^{**}.$$ Suppose that 
$$u^*(   I_{H_2}\oplus 0\oplus ...  )u=p_1\oplus p_2\oplus ...$$ 
Because $$0\oplus ...\oplus 0\oplus p_i\oplus 0\oplus ...\leq p_1\oplus p_2\oplus ...$$ for all $i,$ 
we have that $$u(0\oplus ...0\oplus p_i\oplus 0\oplus ...)u^*\leq I_{H_2}\oplus 0\oplus ...  $$
Thus,  $$u  (0\oplus ...0\oplus p_i\oplus 0\oplus ...)  u^*=\hat p_i \oplus 0\oplus ...  $$
for orthogonal projections $\hat p_i\in \Delta (B^{**}), i\in \bb N.$ Observe that $\hat p_i\hat p_j=0$ for 
$i\neq j.$ If $x\in B^{**},$ then
$$u^*(x\oplus 0\oplus ...)u u^*(I_{H_2}\oplus 0...)u =u^*(I_{H_2}\oplus 0...)u u^*(x\oplus 0\oplus ...)u.$$
Thus, $$  (\rho_1(x)\oplus  \rho _2(x)\oplus ...)  (p_1\oplus p_2\oplus ...)=(p_1\oplus p_2\oplus ...)(\rho_1(x)\oplus  \rho _2(x)\oplus ...).
$$ We conclude that 
$$\rho _i(x)p_i=p_i\rho _i(x),\;\;\forall i\in \bb N, \;\;x\in B^{**}.$$
Thus, for all $x\in B^{**},$ we have that 
\begin{align*}  & u^*(x\oplus 0...)u u^*(\hat p_i\oplus 0\oplus ...)u =  (\rho_1(x)\oplus  \rho _2(x)\oplus ...)
 (0\oplus ...0\oplus p_i\oplus 0\oplus ...)=\\ & (0\oplus ...0\oplus p_i\oplus 0\oplus ...)(\rho_1(x)\oplus  \rho _2(x)\oplus ...)
=   u^*(\hat p_i\oplus 0\oplus ...)uu^*(x\oplus 0...)u   . 
\end{align*} 
Therefore, $\hat p_i$ is in the center of $B^{**}.$ However, as a nest algebra, $B^{**}$ has a trivial center. We can, therefore, conclude that 
there exists $i$ such that $$\hat p_i=I_{B^{**}}$$ and $\hat p_j=0$ for all $j\neq i.$ 
We obtain that 
\begin{equation} \label{xx}u^*(B^{**}\oplus 0\oplus ...)u=(0\oplus ...0\oplus p_iAp_i\oplus 0\oplus ...)
\end {equation}
By the same arguments, for the nest algebra $A^{**}$ there exists exactly one of 
the algebras 
$q_1B^{**}q_1, q_2A^{**}q_2, q_3A^{**}q_3,...$ with $q_1\in B^{**}, q_k\in A^{**}, k\geq 2$
such that 
$$u(0\oplus ...0\oplus A^{**}\oplus 0\oplus ...)u=(q_1B^{**}q_1 \oplus 0\oplus 0\oplus ...)$$
or 
$$u(0\oplus ...0\oplus A^{**}\oplus 0\oplus ...)u=(0\oplus  ...\oplus 0\oplus q_jA^{**}q_j \oplus 0\oplus ...).$$ 
 Here, in the left-hand side, $A^{**}$ is in the $i$-th position. 
The equality (\ref{xx}) implies that 
$$u(0\oplus ...0\oplus A^{**}\oplus 0\oplus ...)u=(q_1B^{**}q_1 \oplus 0\oplus 0\oplus ...)$$
We have proven that 
$$u^*(B^{**}\oplus 0\oplus ...)u\subseteq (0\oplus ...\oplus 0\oplus A^{**}\oplus 0\oplus ...)$$
and $$u(0\oplus ...\oplus 0\oplus A^{**}\oplus 0\oplus ...)u^*\subseteq (B^{**}\oplus 0\oplus ...)$$
We conclude that $$u^*(B^{**}\oplus 0\oplus ...)u=(0\oplus ...\oplus 0\oplus A^{**}\oplus 0\oplus ...)$$
Thus, the nest algebras $A^{**}$ and $B^{**}$ are completely isometrically isomorphic. It follows that their diagonals 
$\Delta (A^{**})$ and $\Delta (B^{**})$ are $*$-isomorphic. However, $\Delta (B^{**})$ 
is an atomic MASA, and $\Delta (A^{**})$ is a MASA with a nontrivial continuous part. This contradiction shows that $\hat A$ and $\hat B$ 
are not $\sigma$-strongly $ \Delta $-equivalent.

\end{document}